\newtheorem{theorem}{Theorem}[section]
\newtheorem{lemma}[theorem]{Lemma}
\newtheorem{corollary}[theorem]{Corollary}
\theoremstyle{definition}
\theoremstyle{remark}
\newtheorem{remark}[theorem]{Remark}
\def\R{\mathbb{R}}
\def\Z{\mathbb{Z}}
\def\T{\mathbb{T}}
\def\P{\mathbb{P}}
\newcommand{\fe}{\mathrm{e}}
\newcommand{\bR}{{\mathbb R}}
\newcommand{\bT}{{\mathbb T}}
\newcommand{\bN}{{\mathbb N}}
\numberwithin{equation}{section}
\begin{document}

\title[Embedded low-regularity integrators for KdV equation]{Embedded exponential-type low-regularity integrators for KdV equation under rough data}

\author[Y. Wu]{Yifei Wu}
\address{\hspace*{-12pt}Y.~Wu: Center for Applied Mathematics, Tianjin University, 300072, Tianjin, China}
\email{yerfmath@gmail.com}

\author[X. Zhao]{Xiaofei Zhao}
\address{\hspace*{-12pt}X.~Zhao: School of Mathematics and Statistics \& Computational Sciences Hubei Key Laboratory, Wuhan University, Wuhan, 430072, China}
\email{matzhxf@whu.edu.cn}

%

\date{}

\dedicatory{}

\begin{abstract}
In this paper, we introduce a novel class of embedded exponential-type low-regularity integrators (ELRIs) for solving the KdV equation and establish their optimal convergence results under rough initial data. The schemes are explicit and efficient to implement. By rigorous error analysis, we first show that the ELRI scheme provides the first order accuracy in $H^\gamma$ for initial data in $H^{\gamma+1}$ for $\gamma>\frac12$. Moreover, by adding two more correction terms to the first order scheme, we show a second order ELRI that provides the second order accuracy in $H^\gamma$ for initial data in $H^{\gamma+3}$ for $\gamma\ge0$. The proposed ELRIs further reduce the regularity requirement of existing methods so far for optimal convergence. The theoretical results are confirmed by numerical experiments, and comparisons with existing methods illustrate the efficiency of the new methods.
 \\ \\
{\bf Keywords:} KdV equation, rough data, low-regularity integrator, first order accuracy, second order accuracy,  error estimates \\ \\
{\bf AMS Subject Classification:} 65L05, 65L20, 65L70, 65M12, 65M15.
\end{abstract}

\maketitle

\tableofcontents

\section{Introduction}
The Korteweg-de Vries (KdV)  equation is a classical model of paramount importance in mathematical studies and also for describing
the waves on shallow water surfaces.
 In this work, we are concerned with the numerical integration of the KdV equation on a torus:
\begin{equation}\label{model}
 \left\{\begin{split}
& \partial_tu(t,x)+\partial_x^3u(t,x)
 =\frac{1}{2}\partial_x(u(t,x))^2,
 \quad t>0,\ x\in\bT,\\
 &u(0,x)=u_0(x),\quad x\in\bT,
 \end{split}\right.
\end{equation}
where $\bT=(0,2\pi)$ is the torus, $u=u(t,x):\bR^{+}\times\bT\to\bR$ is the unknown and $u_0\in H^{s_0}(\bT)$ with some $0\leq s_0<\infty$ is the
given initial data.
Numerically, the KdV equation (\ref{model}) has already been extensively studied in the literature. Different kinds of numerical discretizations in space and time such as finite difference \cite{FD-kdv,FD1}, operator splitting \cite{splitting1,splitting2,splittingJCP,splitting0}, spectral methods \cite{Spectralkdv0,Spectralkdv1,Fourierkdv,Spectralkdv2} and discontinuous Galerkin method \cite{DG1,DG2} have been proposed and analyzed for solving the KdV equation by assuming that the solution of (\ref{model}) is smooth enough. When the solution of the equation is not sufficiently smooth in space, the numerical methods will fail to reach their optimal convergence rates in time and space, and then become less efficient. This is because that in the standard numerical discretizations, the truncation terms usually involve high order spatial derivatives of the solution, i.e. $\partial_x^ku(t,x)$ for some $k\geq0$.
As a matter of fact, the solution of the KdV equation in general could not be ideally smooth due to practical measurements or some randomness coming from the initial data or some potential \cite{SDE,kdv-wellposed}. In this work, we shall consider the KdV equation (\ref{model}) with some rough initial data.

Theoretically, the global well-posedness of the KdV equation for both the torus and whole space cases has already been established in $H^{-1}$ space \cite{CKSTT-03-KDV, KiVi-KdV, KaTo}, i.e. for any $u_0\in H^{s_0}(\T), s_0\ge -1$ and any positive time $T >0$, there exists a unique solution of \eqref{model} in a certain Banach space of functions
$X \subset C([0, T ];H^{s_0}(\T))$.
Such theoretical studies have also been carried out for the generalized KdV equations \cite{BaoWu,CKSTT-04-KDV}.
To address the accuracy of the numerical integrators for the KdV equation (\ref{model}) under rough initial data, many recent efforts \cite{FD-kdv,kdv-kath,splitting1,splitting2,kdv-wellposed,kdv-new1} have been made. Among them, the finite difference method has been analyzed in \cite{FD-kdv}.
The operator splitting methods were shown in \cite{splitting2}
that the first-order convergence rate
 $$\|u(t_n,\cdot)-u^n\|_{H^\gamma}\lesssim \tau,$$
 and the second-order convergence rate
 $$\|u(t_n,\cdot)-u^n\|_{H^\gamma}\lesssim \tau^2,$$
up to some finite time are achieved  respectively for solutions in $H^{\gamma+3}$ and $H^{\gamma+5}$ spaces for some $\gamma>0$, where $\tau>0$ denotes the time step and $u^n$ denotes the numerical solution at $t_n=n\tau$. To bring down the regularity requirements, there is a
 recent trend to  propose the so-called \emph{low-regularity integrators (LRIs)}. This has already been considered for some important dispersive models.
For example, for the cubic nonlinear Schr\"{o}dinger equation in one dimension, the first order and second order convergence rates in $H^\gamma$ have been achieved under respectively only
$H^{\gamma+1}$-data \cite{lownls3,lownls} and $H^{\gamma+2}$-data \cite{lownls2}, and for the one-dimensional quadratic nonlinear Schr\"{o}dinger equation \cite{lownls2,lownls3} or
the nonlinear Dirac equations \cite{diraclow}, only $H^{\gamma}$-data is needed. To design the LRIs for the KdV equation, there are two main difficulties. The first one which is a common difficulty of designing LRIs, is to  drop as less spatial derivatives in the approximation as possible but meanwhile keep the scheme being defined point-wisely in the physical space rather than in the Fourier space. This is very important to guarantee the efficiency of the scheme \cite{lownls2,lownls,WuZhao-1}. The second one is the presence of the Burgers-nonlinearity in the KdV equation (\ref{model}) that naturally involves spatial derivative and makes the task harder compared with others such as the cubic Schr\"{o}dinger equation.
To breaking the ground, by introducing the twisted variable $v:=\fe^{\partial_x^3t}u$ and the Duhamel's formula at $t_n$:
\begin{equation}\label{v eq}
  v(t_n+\tau,x)=v(t_n,x)+\frac{1}{2}\int_0^\tau\fe^{(t_n+s)\partial_x^3}
  \partial_x\left(\fe^{-(t_n+s)\partial_x^3}v(t_n+s,x)\right)^2ds,
\end{equation}
 \cite{kdv-kath} proposed an exponential-type numerical scheme by letting $v(t_n+s,x)\approx v(t_n,x)$, and
 the integration for $s$ was found exactly and explicitly in the physical space. This strategy gives rise to a
 first order LRI scheme  in $H^1$
for initial value $u_0\in H^{3}$ as proved in \cite{kdv-kath}. Moreover, by approximation
$v(t_n+s,x)\approx v(t_n,x)+s\partial_tv(t_n,x)$
as also been outlined in \cite{kdv-kath} and later proved rigorously in our recent work \cite{WuZhao-1} that
the second-order convergence is reached in $H^\gamma$ for the initial data from $H^{\gamma+4}$ for any $\gamma\geq0$. That is to say for integrating the KdV equation (\ref{model}), in order to reach the optimal first order accuracy and second order accuracy respectively, the
minimum regularity requirements among all the possible numerical schemes so far are the boundedness of two and four
additional spatial derivatives of the solution. In the recent investigation \cite{Schratznew}, such regularity requirements have been considered as essential for all direct integration approaches by the resonance structure.

To further bring down the regularity requirement and make it more accurate and efficient to solve the KdV equation under rough data, in this work, we are going to introduce two novel \emph{embedded low-regularity integrators (ELRIs)} of the exponential-type \cite{Hochbruck}
for solving (\ref{model}) by incorporating more strategies from the harmonic analysis. The main idea is to work on an embedded integral form of (\ref{model}) by iterating the Duhamel's formula (\ref{v eq}) once instead of directly working on (\ref{v eq}). Note that the iterative strategy here is not aiming for high order numerical approximations but for saving the regularity of the solution, where the nested integral in the embedded form after integration-by-parts does help. We then use the multiplier analysis to construct the explicit numerical scheme in the physical space. This idea was inspired by the work in \cite{CKSTT-03-KDV}, and it can also be represented by the normal form in \cite{BaoWu}.  As an example to illustrate the great role that the multiplier analysis plays in the approximation process, we consider here the main obstacle term in Fourier space occurred after the integration-by-parts:
$$
\int_0^\tau\!\!\sum\limits_{\xi=\xi_1+\xi_2+\xi_3  \atop  \xi_1,\xi_2,\xi_3\ne 0} \frac{1}{\xi_1}\fe^{-i(t_n+s)(\xi^3-\xi_1^3-\xi_2^3-\xi_3^3)}
\>\widehat{v}(t_n,\xi_1)\widehat{v}(t_n,\xi_2)\widehat{v}(t_n,\xi_3)\,ds.
$$
Here $\widehat{v}$ denotes the Fourier transform of $v$.
The phase function $\xi^3-\xi_1^3-\xi_2^3-\xi_3^3$ can not be integrated in the physical space exactly, which prevents us from defining an explicit point-wise scheme in the physical space. To overcome this difficulty, the key observation is to replace the multiplier $\frac1{\xi_1}$ by $[\frac1{\xi_1}]_{sym}$, where  $[\cdot]_{sym}$ is the symmetrization of the multiplier, and  note
\begin{align*}
\Big[\frac{1}{\xi_1}\Big]_{sym}=&\frac13\Big(\frac{1}{\xi_1}+\frac{1}{\xi_2}+\frac{1}{\xi_3}\Big)
=\frac{\xi^3-\xi_1^3-\xi_2^3-\xi_3^3}{9\xi\xi_1\xi_2\xi_3} +\frac1{3\xi}.
\end{align*}
This allows us to
split the above integral term into two parts:
$$
\frac13\int_0^\tau\!\!\sum\limits_{\xi=\xi_1+\xi_2+\xi_3  \atop  \xi_1,\xi_2,\xi_3\ne 0}\frac{\xi^3-\xi_1^3-\xi_2^3-\xi_3^3}{3\xi\xi_1\xi_2\xi_3}\fe^{-i(t_n+s)(\xi^3-\xi_1^3-\xi_2^3-\xi_3^3)}
\>\widehat{v}(t_n,\xi_1)\widehat{v}(t_n,\xi_2)\widehat{v}(t_n,\xi_3)\,ds
$$
and
$$
\frac1{3\xi}\int_0^\tau\!\!\sum\limits_{\xi=\xi_1+\xi_2+\xi_3  \atop  \xi_1,\xi_2,\xi_3\ne 0}\fe^{-i(t_n+s)(\xi^3-\xi_1^3-\xi_2^3-\xi_3^3)}
\>\widehat{v}(t_n,\xi_1)\widehat{v}(t_n,\xi_2)\widehat{v}(t_n,\xi_3)\,ds,
$$
where the first part can be exactly integrated in the physical space. As for the second part, for a first order scheme, we simply approximate it by:
$$
\frac\tau{3\xi}\sum\limits_{\xi=\xi_1+\xi_2+\xi_3  \atop  \xi_1,\xi_2,\xi_3\ne 0}
\fe^{-it_n(\xi^3-\xi_1^3-\xi_2^3-\xi_3^3)}
\>\widehat{v}(t_n,\xi_1)\widehat{v}(t_n,\xi_2)\widehat{v}(t_n,\xi_3),
$$
which only loses one spatial derivative, thanks to the factor $\frac{1}{\xi}$ in front and the fact  $\xi^3-\xi_1^3-\xi_2^3-\xi_3^3
=3(\xi\xi_1\xi_2+\xi\xi_1\xi_3+\xi\xi_2\xi_3-\xi_1\xi_2\xi_3)$. A more accurate approximation is used to the second part in order to get the second order accuracy.

The proposed final schemes are of exponential-type and are fully explicit with point-wise definition in the physical space. As will be shown by our rigorous theoretical estimates,  the ELRIs  are able to get the desired
\textbf{first order} and the \textbf{second order} accuracy with respectively only \textbf{one} and \textbf{three} additional bounded spatial derivatives.
The main convergence results will be presented in \textbf{Theorem \ref{thm:convergence}} and \textbf{Theorem \ref{thm:convergence-2ord}}. Numerical experiments will be given in the end to justify the theoretical estimates and to compare with the performance of the existing LRIs from \cite{kdv-kath}. As will be shown by the numerical results, the proposed ELRIs are indeed much more accurate and efficient than the LRIs.

The rest of the paper is organized as follows. In Section \ref{sec:scheme}, we derive the schemes of the first order and second order ELRI schemes and state the main convergence results in Theorem \ref{thm:convergence} and Theorem \ref{thm:convergence-2ord}. In Section \ref{sec:1ord-proof} and Section \ref{sec:2ord-proof},
we present the rigorous proofs of the first-order convergence result and  the second-order convergence result in a sequel.
Numerical confirmations are reported in Section \ref{sec:numerical} and conclusions are drawn in Section \ref{sec:conclusion}.

\section{Embedded low-regularity integrators: schemes and derivation}\label{sec:scheme}
In this section, we will construct our new embedded low-regularity exponential-type integrators. To this purpose, we shall first present some notations and tools for the convenience of the derivation and numerical analysis later. Some of them are employed from \cite{CKSTT-03-KDV}.

\subsection{Some notations and facts}
Denote $\langle \cdot\rangle=(1+|\cdot|^2)^\frac12$  and  $|\nabla|^{s_0}=(-\partial_{xx})^{\frac{{s_0}}{2}}$ for ${s_0}>0$.
We define
$(d\xi)$ to be  the normalized counting measure on
$\Z$ such that
\begin{equation*}
\displaystyle\int
a(\xi)\,(d\xi)
=
\sum\limits_{\xi\in
\Z} a(\xi).
\end{equation*}
The Fourier transform of a function $f(x)$ on $\T$ is defined by
$$
\mathcal{F}(f)(\xi)=\widehat{f}(\xi)=\frac1{2\pi}\displaystyle\int_{\bT}
\fe^{- i   x\xi}f( x)\,d x,
$$
and thus the Fourier inversion formula reads
$$
f(x)=\displaystyle\int \fe^{i  x\xi} \widehat{f}(\xi)\,(d\xi).
$$
Then the following usual properties of the Fourier transform hold:
\begin{eqnarray*}
 &\|f\|_{L^2(\bT)}
 = \sqrt{2\pi}\big\|\widehat{f}\big\|_{L^2((d \xi))} \quad \mbox{(Plancherel)}; \\
 &\displaystyle\langle f,g\rangle=\int_\bT f(x)\overline{g(x)}\,dx
 =2\pi \displaystyle\int \widehat{f}(\xi)\overline{\widehat{g}(\xi)}\,(d\xi)\quad \mbox{(Parseval)} ; \\
 & \widehat{(fg)}(\xi)=\displaystyle\int
  \widehat{f}(\xi-\xi_1)\widehat{g}(\xi_1) \,(d\xi_1) \quad \mbox{(Convolution)}.
\end{eqnarray*}

The Sobolev space $H^{s_0}(\bT)$ for ${s_0}\geq0$ has the equivalent norm,
$$
\big\|f\big\|_{H^{s_0}(\bT)}=
\big\|J^{s_0} f\big\|_{L^2(\bT)}=\sqrt{2\pi}\left\|\langle\xi\rangle^{s_0}
\widehat{f}(\xi)\right\|_{L^2((d\xi))},
$$
where we denote the operator
$$J^{s_0}=(1-\partial_{xx})^\frac {s_0}2.$$
Moreover, we define $\partial_x^{-1}$ for function $f(x)$ on $\bT$ as
\begin{equation}\label{def:px-1}
\widehat{(\partial_x^{-1}f)}(\xi)
=\Bigg\{ \aligned
    &(i\xi)^{-1}\widehat f(\xi),\quad &\mbox{when } \xi\ne 0,\\
    &0,\quad &\mbox{when } \xi= 0.
   \endaligned
\end{equation}
We denote $\P$ the orthogonal projection onto mean zero functions, that is
\begin{equation}\label{P def}
\P f(x)=f(x)-\frac1{2\pi}\int_\T f(x)\,dx.
\end{equation}
In particular, it is worth noting that from the definition of $\partial_x^{-1}$ in \eqref{def:px-1}, we have the special `formula':
\begin{align}
\partial_x^{-1} \partial_x f(x)=\P f(x).\label{par-x+x}
\end{align}

To analyze the resonance of the Fourier frequencies, for simplicity we denote
$$
\alpha_{k+1}=\xi^3-\xi_1^3-\cdots-\xi_k^3,\quad k\ge 2.
$$
Then if $\xi=\xi_1+\cdots+\xi_k$, we have that
\begin{subequations}
\begin{align}
\alpha_3&=3\xi\xi_1\xi_2; \label{formu-a3}\\
\alpha_4&=
3(\xi\xi_1\xi_2+\xi\xi_1\xi_3+\xi\xi_2\xi_3-\xi_1\xi_2\xi_3).\label{formu-a4}
\end{align}
\end{subequations}

For convenience of the derivation of schemes,  we shall assume that the zero-mode/average of the initial value of (\ref{model}) is zero, that is $\widehat{ u_0}(0)=0$. Otherwise, we may consider instead
\begin{equation}
\tilde u(t,x):= u\left(t,x-\widehat{u_0}(0) t\right)-\widehat{u_0}(0),\quad t\geq0,\ x\in\bT,\label{shift}
\end{equation}
and one may note that $\tilde u$ also obeys the same KdV equation of \eqref{model} with initial data $\tilde u_0:=u_0-\widehat u_0(0)$.
For numerical discretizations, we denote $\tau>0$ as the time step and $t_n=n\tau$ for $n\in\bN$ as the grid points in time. When it is not misleading, we will omit the spatial variable $x$ of a function of space-time $f(t,x)$ for simplicity, e.g. $f(t)=f(t,x)$.

Now by direct calculations, we give the first key fact for our derivation of schemes.
\begin{lemma}(Integration-by-parts)\label{lem:1-form}
\begin{itemize}
  \item[(i)]
  Let  the space-time functions $f(t,x),g(t,x)\in L^2(\bT)$ with zero-average  $\widehat{f}(t,0)=\widehat{g}(t,0)=0$ for $t\in[0,\tau]$, then we have for any $t_n\geq0$,
  \begin{align*}
   &\int_0^\tau \fe^{(t_n+t)\partial_x^3}\partial_x\left(\fe^{-(t_n+t)\partial_x^3}f(t)\cdot \fe^{-(t_n+t)\partial_x^3}g(t)\right)\,dt\\
  =&
  \frac13 \fe^{t_{n+1}\partial_x^3}\left(\fe^{-t_{n+1}\partial_x^3}\partial_x^{-1}f(\tau)\cdot \fe^{-t_{n+1}\partial_x^3}\partial_x^{-1}g(\tau)\right)
  -\frac13\fe^{t_{n}\partial_x^3}\left(\fe^{-t_{n}\partial_x^3}\partial_x^{-1}f(0)\cdot \fe^{-t_{n}\partial_x^3}\partial_x^{-1}g(0)\right)\\
  &-\frac13 \int_0^\tau \fe^{(t_{n}+t)\partial_x^3}\Big[\fe^{-(t_{n}+t)\partial_x^3}\partial_x^{-1}\partial_tf(t)\cdot \fe^{-(t_{n}+t)\partial_x^3}\partial_x^{-1}g(t)\\
  &\qquad\qquad\qquad\qquad+\fe^{-(t_{n}+t)\partial_x^3}\partial_x^{-1}f(t)\cdot \fe^{-(t_{n}+t)\partial_x^3}\partial_x^{-1}\partial_tg(t)\Big]\,dt.
  \end{align*}
  \item[(ii)]
  Let the space functions $f_j(x)\in L^2(\bT)$ (time-independent) with $\widehat{f_j}(0)=0$ for $j=1,2,3$, then we have for any $t_n\geq0$,
  \begin{align*}
   &\int_0^\tau\!\! \int_\T \fe^{(t_n+t)\partial_x^3}\left(\fe^{-(t_n+t)\partial_x^3}f_1\cdot
   \fe^{-(t_n+t)\partial_x^3}f_2\cdot\fe^{-(t_n+t)\partial_x^3}f_3\right)\,dxdt\\
  =&
-  \frac13\int_\T \left(\fe^{-t_{n+1}\partial_x^3}\partial_x^{-1}f_1\cdot
\fe^{-t_{n+1}\partial_x^3}\partial_x^{-1}f_2\cdot
\fe^{-t_{n+1}\partial_x^3}\partial_x^{-1}f_3\right)\,dx\\
 &+\frac13\int_\T\left(\fe^{-t_{n}\partial_x^3}\partial_x^{-1}f_1\cdot
 \fe^{-t_{n}\partial_x^3}\partial_x^{-1}f_2\cdot
 \fe^{-t_{n}\partial_x^3}\partial_x^{-1}f_3\right)\,dx.
  \end{align*}
  \end{itemize}
\end{lemma}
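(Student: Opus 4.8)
The plan is to verify both identities by a direct computation on the Fourier side: the algebraic resonance relations \eqref{formu-a3}--\eqref{formu-a4} make the $t$-integration exact, and the symbol $1/(i\xi)$ of $\partial_x^{-1}$ from \eqref{def:px-1} lets one carry the result back to physical space. The zero-average hypotheses will be used precisely to keep every frequency appearing in a denominator away from $0$.

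For part (i), I would fix $\xi\in\Z$ and compute the Fourier coefficient at $\xi$ of the left-hand side. Since $\fe^{-(t_n+t)\partial_x^3}$ has symbol $\fe^{i(t_n+t)\xi^3}$ and $\fe^{(t_n+t)\partial_x^3}\partial_x$ has symbol $i\xi\fe^{-i(t_n+t)\xi^3}$, the convolution formula gives that this coefficient equals
\[
i\xi\int_0^\tau\!\!\int \fe^{-i(t_n+t)\alpha_3}\,\widehat f(t,\xi-\xi_1)\,\widehat g(t,\xi_1)\,(d\xi_1)\,dt ,
\]
where, by \eqref{formu-a3}, $\alpha_3=\xi^3-(\xi-\xi_1)^3-\xi_1^3=3\xi(\xi-\xi_1)\xi_1$. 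Because $\widehat f(t,0)=\widehat g(t,0)=0$, the integrand vanishes unless $\xi-\xi_1\ne0$ and $\xi_1\ne0$, and for such terms with $\xi\ne0$ one has $\alpha_3\ne0$, so the primitive $\int\fe^{-i(t_n+t)\alpha_3}\,dt=\tfrac{i}{\alpha_3}\fe^{-i(t_n+t)\alpha_3}$ permits one integration by parts in $t$. The two boundary contributions (at $t=\tau$, using $t_n+\tau=t_{n+1}$, and at $t=0$) produce the first two terms on the right-hand side, and the term carrying $\partial_t\big(\widehat f(t,\xi-\xi_1)\widehat g(t,\xi_1)\big)$ produces the last. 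In each term, combining the $i\xi$ in front with the $\tfrac{i}{\alpha_3}$ from the primitive and with $\alpha_3=3\xi(\xi-\xi_1)\xi_1$ leaves, up to a sign, the scalar $\tfrac1{3(\xi-\xi_1)\xi_1}$, which---after absorbing the remaining imaginary units---is $\tfrac13$ times (up to a sign) the product of the $\partial_x^{-1}$-symbols $\tfrac1{i(\xi-\xi_1)}$ and $\tfrac1{i\xi_1}$ on the two factors; meanwhile the surviving exponentials $\fe^{-i(t_n+t)\xi^3}\fe^{i(t_n+t)(\xi-\xi_1)^3}\fe^{i(t_n+t)\xi_1^3}$ recombine, reading the convolution formula backwards, into $\fe^{(t_n+t)\partial_x^3}$ applied to the product of the two $\fe^{-(t_n+t)\partial_x^3}$-twisted factors. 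The $\xi=0$ mode I would check separately: the left-hand side vanishes because of the $\partial_x$, and on the right-hand side the zero mode of the product of the two mean-zero functions $\fe^{-(t_n+t)\partial_x^3}\partial_x^{-1}f(t)$ and $\fe^{-(t_n+t)\partial_x^3}\partial_x^{-1}g(t)$ equals $A(t):=\int\widehat{\partial_x^{-1}f}(t,-\xi_1)\widehat{\partial_x^{-1}g}(t,\xi_1)\,(d\xi_1)$, so the three terms sum to $\tfrac13A(\tau)-\tfrac13A(0)-\tfrac13\int_0^\tau\partial_tA(t)\,dt=0$ by the fundamental theorem of calculus.

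Part (ii) follows the same template but is simpler, the $f_j$ being time-independent. Using $\int_\T h\,dx=2\pi\widehat h(0)$ and the fact that $\fe^{(t_n+t)\partial_x^3}$ fixes the zero mode, the left-hand side equals
\[
2\pi\int_0^\tau\!\!\iint \fe^{i(t_n+t)(\xi_1^3+\xi_2^3+\xi_3^3)}\,\widehat{f_1}(\xi_1)\,\widehat{f_2}(\xi_2)\,\widehat{f_3}(\xi_3)\,(d\xi_1)(d\xi_2)\,dt ,
\]
with $\xi_3=-\xi_1-\xi_2$. Now $\widehat{f_j}(0)=0$ forces $\xi_1,\xi_2,\xi_3\ne0$, and since $\xi_1+\xi_2+\xi_3=0$, specializing \eqref{formu-a4} to $\xi=0$ gives $\xi_1^3+\xi_2^3+\xi_3^3=3\xi_1\xi_2\xi_3\ne0$. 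The $t$-integral is then a bare primitive, $\int_0^\tau\fe^{i(t_n+t)3\xi_1\xi_2\xi_3}\,dt=\tfrac1{3i\xi_1\xi_2\xi_3}\big(\fe^{it_{n+1}3\xi_1\xi_2\xi_3}-\fe^{it_n3\xi_1\xi_2\xi_3}\big)$, and $\tfrac1{3i\xi_1\xi_2\xi_3}=-\tfrac13\tfrac1{(i\xi_1)(i\xi_2)(i\xi_3)}$ converts each $\widehat{f_j}(\xi_j)$ into $\widehat{\partial_x^{-1}f_j}(\xi_j)$ at the price of the factor $-\tfrac13$. Rewriting $3\xi_1\xi_2\xi_3=\xi_1^3+\xi_2^3+\xi_3^3$, distributing the exponential into $\fe^{-t_{n+1}\partial_x^3}$ (resp. $\fe^{-t_n\partial_x^3}$) on each factor, and using $\iint\widehat{h_1}(\xi_1)\widehat{h_2}(\xi_2)\widehat{h_3}(-\xi_1-\xi_2)\,(d\xi_1)(d\xi_2)=\tfrac1{2\pi}\int_\T h_1h_2h_3\,dx$ recovers exactly the two terms on the right-hand side.

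The argument is elementary throughout; the only delicate points are the bookkeeping of the powers of $i$ and of the signs when moving between Fourier multipliers and the operators $\fe^{\pm(t_n+t)\partial_x^3}$ and $\partial_x^{-1}$, and---the one genuinely essential ingredient---the zero-average hypotheses, which are exactly what makes every frequency in a denominator nonzero, so that $\partial_x^{-1}$ is well defined on each factor, the phases $\alpha_3=3\xi(\xi-\xi_1)\xi_1$ and $3\xi_1\xi_2\xi_3$ never vanish, and the integration by parts in $t$ is legitimate.
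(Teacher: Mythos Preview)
Your proposal is correct and follows essentially the same route as the paper: pass to Fourier space, use the resonance identities $\alpha_3=3\xi\xi_1\xi_2$ and $\xi_1^3+\xi_2^3+\xi_3^3=3\xi_1\xi_2\xi_3$ (the latter being \eqref{formu-a4} at $\xi=0$) to integrate the exponential in $t$ (via integration by parts for (i), exactly for (ii)), and then read the resulting multipliers back as $\partial_x^{-1}$ and $\fe^{\pm(t_n+t)\partial_x^3}$. Your separate verification of the $\xi=0$ mode in (i) via the fundamental theorem of calculus is slightly more explicit than the paper's one-line remark that the Fourier-side expression vanishes at $\xi=0$, but the content is the same.
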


\begin{proof}
(i) By taking the Fourier transform, we get for any $t_n\geq0$,
  \begin{align*}
&\mathcal F\left(\int_0^\tau \fe^{(t_n+t)\partial_x^3}\partial_x
\left(\fe^{-(t_n+t)\partial_x^3}f(t)\cdot \fe^{-(t_n+t)\partial_x^3}g(t)\right)\,dt\right)(\xi)\\
  =&i\xi\int_0^\tau\!\int_{\xi=\xi_1+\xi_2}\! \fe^{-i(t_n+t)\alpha_3}\widehat{f}(t,\xi_1)\widehat{g}(t,\xi_2)\,(d\xi_1)dt,
  \end{align*}
  where $\alpha_3$ is given in (\ref{formu-a3}).
  Note that the right-hand side vanishes when $\xi=0$, so we only need to consider the case $\xi\ne 0$. Then from the formula
  $$
  \fe^{-i(t_n+t)\alpha_3}
  =-\frac{1}{3i\xi\xi_1\xi_2}\frac{d}{dt}\Big(\fe^{-i(t_n+t)\alpha_3}\Big),
  $$
   and integration-by-parts for $t$, we find
  \begin{align*}
&\mathcal F\Big(\int_0^\tau \fe^{(t_n+t)\partial_x^3}\partial_x\big(\fe^{-(t_n+t)\partial_x^3}f(t)\cdot \fe^{-(t_n+t)\partial_x^3}g(t)\big)\,dt\Big)(\xi)\\
=&-\int_{\xi=\xi_1+\xi_2}\! \frac{1}{3\xi_1\xi_2}\fe^{-it_{n+1}\alpha_3}
  \widehat{f}(\tau,\xi_1)\widehat{g}(\tau,\xi_2)\,(d\xi_1)
  +\int_{\xi=\xi_1+\xi_2}\! \frac{1}{3\xi_1\xi_2}\fe^{-it_{n}\alpha_3}
 \widehat{f}(0,\xi_1)\widehat{g}(0,\xi_2)\,(d\xi_1)\\
&\,\,\, +\int_0^\tau\!\int_{\xi=\xi_1+\xi_2}\! \frac{1}{3\xi_1\xi_2}\fe^{-i(t_{n}+t)\alpha_3}
 \Big(\partial_t\widehat{f}(t,\xi_1)\widehat{g}(t,\xi_2)+\widehat{f}(t,\xi_1)
 \partial_t\widehat{g}(t,\xi_2)\Big)\,(d\xi_1) dt.
  \end{align*}
 Note that the right-hand side of the last equality above vanishes when $\xi=0$.
Then this gives the claimed equality by the inverse Fourier transform.

(ii) Note that for any $t\in\R$ and $f(x)$ on $\bT$,
$$
\int_\T \fe^{it\partial_x^3} f(x)\,dx=\int_\T f(x)\,dx.
$$
Then we have
 \begin{align*}
   \int_0^\tau\!\! \int_\T &\fe^{(t_n+t)\partial_x^3}\left(\fe^{-(t_n+t)\partial_x^3}f_1\cdots \fe^{-(t_n+t)\partial_x^3}f_3\right)\,dxdt
  =
\int_0^\tau\!\! \int_\T \left(\fe^{-(t_n+t)\partial_x^3}f_1\cdots \fe^{-(t_n+t)\partial_x^3}f_3\right)\,dxdt.
  \end{align*}
Then similarly as above,  by using Parseval's identity, we get that for any $t_n\geq0$,
 \begin{align*}
&\quad\int_0^\tau \!\! \int_\T \fe^{(t_n+t)\partial_x^3}\left(\fe^{-(t_n+t)\partial_x^3}f_1\cdots \fe^{-(t_n+t)\partial_x^3}f_3\right)\,dxdt\\
  =&2\pi\int_0^\tau\!\!\int_{\xi_1+\xi_2+\xi_3=0}\! \fe^{i(t_n+t)(\xi_1^3+\xi_2^3+\xi_3^3)}\widehat{f}(\xi_1)
  \widehat{f}(\xi_2)\widehat{f}(\xi_3)\,(d\xi_1)(d\xi_2)dt.
  \end{align*}
 Note by (\ref{formu-a4}) that for $\xi_1+\xi_2+\xi_3=0$,
 $$
 \xi_1^3+\xi_2^3+\xi_3^3=3\xi_1\xi_2\xi_3.
 $$
 Then  we have the formula
  $$
 \int_0^\tau \fe^{i(t_n+t)(\xi_1^3+\xi_2^3+\xi_3^3)}\,dt
  =\frac{1}{3i\xi_1\xi_2\xi_3}\left(\fe^{it_{n+1}(\xi_1^3+\xi_2^3+\xi_3^3)}-\fe^{it_{n}(\xi_1^3+\xi_2^3+\xi_3^3)}\right),
  $$
and thus
  \begin{align*}
&\int_0^\tau\!\!\int_{\xi_1+\xi_2+\xi_3=0}\! \fe^{i(t_n+t)(\xi_1^3+\xi_2^3+\xi_3^3)}
\widehat{f}(\xi_1)\widehat{f}(\xi_2)\widehat{f}(\xi_3)\,(d\xi_1)(d\xi_2)dt\\
=&\int_{\xi_1+\xi_2+\xi_3=0}\! \frac{1}{3i\xi_1\xi_2\xi_3}\fe^{it_{n+1}(\xi_1^3+\xi_2^3+\xi_3^3)}
\widehat{f}(\xi_1)\widehat{f}(\xi_2)\widehat{f}(\xi_3)\,(d\xi_1)(d\xi_2)\\
&\,\,\, -
\int_{\xi_1+\xi_2+\xi_3=0}\! \frac{1}{3i\xi_1\xi_2\xi_3}\fe^{it_{n}(\xi_1^3+\xi_2^3+\xi_3^3)}
\widehat{f}(\xi_1)\widehat{f}(\xi_2)\widehat{f}(\xi_3)\,(d\xi_1)(d\xi_2).
  \end{align*}
This gives the claimed equality by Parseval's identity again.
\end{proof}

\subsection{First order scheme} With the above preparation, we now start to derive the first order low-regularity integrator for (\ref{model}).
By introducing the twisted variable \cite{kdv-kath}
\begin{equation}\label{twist}v(t,x):=\fe^{t\partial_x^3}u(t,x),\quad t\geq0,\ x\in\bT,\end{equation}
the KdV equation (\ref{model}) becomes:
\begin{equation}\label{vt eq}
  \partial_t v(t,x)=\frac{1}{2}\fe^{t\partial_x^3}
  \partial_x\left(\fe^{-t\partial_x^3}v(t,x)\right)^2,\quad
  t\geq0,\ x\in\bT.
\end{equation}
By the Duhamel formula \eqref{v eq}, we have the mild solution for some $n\geq0$,
\begin{equation*}
  v(t_{n}+\tau)=v(t_n)+\frac{1}{2}\int_0^\tau\fe^{(t_n+s)\partial_x^3}
  \partial_x\left(\fe^{-(t_n+s)\partial_x^3}v(t_n+s)\right)^2ds,\quad n\geq0.
\end{equation*}
Instead of directly approximating $v(t_n+s)$ in the above integrand as was done in the previous work \cite{kdv-kath}, we iterate the Duhamel's formula for one time in order to incorporate more inherent structure of the equation.
That is to insert the Duhamel's formula of $v(t_n+s)$ into the above equation, and we get
\begin{align}
&v(t_{n+1})\nonumber\\
=&v(t_n)+\frac{1}{2}\int_0^\tau\fe^{(t_n+s)\partial_x^3}
\partial_x\left[\fe^{-(t_n+s)\partial_x^3}\left(v(t_n)+\frac12\int_0^s \fe^{(t_n+\rho)\partial_x^3}
  \partial_x\left(\fe^{-(t_n+\rho)\partial_x^3}v(t_n+\rho)\right)^2\,d\rho\right)
  \right]^2ds.\label{nest v}
\end{align}
We emphasize that this iterative strategy applied here is not aiming for high order accurate approximations as used in the literature (see \cite{kdv-kath,WuZhao-1} and some other works in general \cite{Schratznew,diraclow,lownls2}), but for saving the spatial derivatives, where the nested integral will help to maintain some regularity in the approximation process.

We now begin our numerical approximation based on the above formula.  First of all,  let $v(t_n+\rho)\approx v(t_n)$ in the above integrand and we get
\begin{align}
 &v(t_{n+1})\nonumber\\
 \approx &v(t_n)+\frac{1}{2}\int_0^\tau\fe^{(t_n+s)\partial_x^3}
\partial_x\left[\fe^{-(t_n+s)\partial_x^3}\left(v(t_n)+\frac12\int_0^s \fe^{(t_n+\rho)\partial_x^3}
  \partial_x\left(\fe^{-(t_n+\rho)\partial_x^3}v(t_n)\right)^2\,
  d\rho\right)\right]^2ds.\label{v app1}
\end{align}
By further dropping  the $O(\tau^3)$-term in (\ref{v app1}), we get
\begin{equation}\label{v app}
  v(t_{n+1})\approx v(t_n)+\frac{1}{2}(I_1(t_n)+I_2(t_n)),\quad n\geq0,
\end{equation}
where we denote
\begin{align*}
 &I_1(t_n):=\int_0^\tau\fe^{(t_n+s)\partial_x^3}
 \partial_x\left(\fe^{-(t_n+s)\partial_x^3}v(t_n)\right)^2ds,\\
 & I_2(t_n):=\int_0^\tau\!\!\int_0^s \!\!\fe^{(t_n+s)\partial_x^3}
 \partial_x\left[\fe^{-(t_n+s)\partial_x^3}v(t_n)\cdot
 \fe^{(\rho-s)\partial_x^3}
  \partial_x\left(\fe^{-(t_n+\rho)\partial_x^3}v(t_n)\right)^2\,d\rho\Big)\right]ds.
\end{align*}
We will  show later rigorously that the dropped $O(\tau^3)$-term in (\ref{v app1}) can be treated as a $O(\tau^2)$ truncation with only one loss of derivative.
As was found in the pioneering work \cite{kdv-kath} or as a special case of our Lemma \ref{lem:1-form} (i), the integrand in $I_1(t_n)$  can be exactly integrated in the physical space:
\begin{equation}
I_1(t_n)=\frac13\fe^{t_{n+1}\partial_x^3}\left(\fe^{-t_{n+1}\partial_x^3}
\partial_x^{-1}v(t_n)\right)^2
-\frac13\fe^{t_n\partial_x^3}\left(\fe^{-t_n\partial_x^3}\partial_x^{-1}v(t_n)\right)^2,
\quad n\geq0.
\label{est:15.15-1}
\end{equation}
This is very important not only for the accuracy but also for the efficiency of the scheme, since the point-wise formula in the physical space can be efficiently implemented by Fourier spectral method.
However, this is \emph{not} possible for $I_2(t_n)$ unfortunately. Therefore, the following effort is made to approximate $I_2(t_n)$ in order to evaluate it in the physical space, where we pay particular attention to only afford one loss of derivative. The embedded temporal integral does provide flexibility to make this possible as we shall see.

We define for some function $w$ on $\bT$ and $s\geq0$,
\begin{align}\label{def-Fn}
F_n(w,s):=&\int_0^s \fe^{(t_n+\rho)\partial_x^3}\partial_x\left(\fe^{-(t_n+\rho)\partial_x^3}
w\right)^2\,d\rho\nonumber\\
=&\frac13 \fe^{(t_{n}+s)\partial_x^3}\left(\fe^{-(t_{n}+s)\partial_x^3}\partial_x^{-1}w\right)^2
  -\frac13\fe^{t_{n}\partial_x^3}\left(\fe^{-t_{n}\partial_x^3}\partial_x^{-1}w\right)^2,
  \quad n\geq0,
\end{align}
which is obtained similarly by the fact in (\ref{est:15.15-1}),
and then $I_2(t_n)$ reads
$$I_2(t_n)=\int_0^\tau\fe^{(t_n+s)\partial_x^3}
 \partial_x\left(\fe^{-(t_n+s)\partial_x^3}v(t_n)\cdot \fe^{-(t_n+s)\partial_x^3}F_n(v(t_n),s)\right)\,ds.$$
 Noting that $\widehat{F_n(v,s)}(0)=0$, so by applying the integration-by-parts in $I_2(t_n)$ with the result of Lemma \ref{lem:1-form} (i), we find
\begin{subequations}\label{10.40}
\begin{align}
I_2(t_n)
=&\frac13\fe^{t_{n+1}\partial_x^3}\Big(\fe^{-t_{n+1}\partial_x^3}\partial_x^{-1}v(t_n)\cdot \fe^{-t_{n+1}\partial_x^3}\partial_x^{-1}F_n\big(v(t_n),\tau\big)\Big)\label{10.40-1}\\
  &-\frac13\fe^{t_{n}\partial_x^3}\Big(\fe^{-t_{n}\partial_x^3}\partial_x^{-1}v(t_n)\cdot \fe^{-t_{n}\partial_x^3}\partial_x^{-1}F_n\big(v(t_n),0\big)\Big)\label{10.40-2}\\
 &-\frac13 \int_0^\tau \fe^{(t_{n}+s)\partial_x^3}\Big(
\fe^{-(t_{n}+s)\partial_x^3}\partial_x^{-1}v(t_n)\cdot \fe^{-(t_{n}+s)\partial_x^3}\partial_x^{-1}\partial_sF_n\big(v(t_n),s\big)\Big)\,ds.\label{10.40-3}
\end{align}
\end{subequations}
By (\ref{def-Fn}), we find
\begin{equation}\begin{split}
\eqref{10.40-1}
=&\frac1{9}\fe^{t_{n+1}\partial_x^3}\Big(\fe^{-t_{n+1}\partial_x^3}\partial_x^{-1}v(t_n)
\cdot\partial_x^{-1}\big(\fe^{-t_{n+1}\partial_x^3}\partial_x^{-1}v(t_n)\big)^2\Big)\\
&-\frac1{9}\fe^{t_{n+1}\partial_x^3}\Big(\fe^{-t_{n+1}\partial_x^3}\partial_x^{-1}v(t_n)
\cdot\fe^{-\tau\partial_x^3}\partial_x^{-1}\big(\fe^{-t_{n}\partial_x^3}
\partial_x^{-1}v(t_n)\big)^2\Big),
\end{split}\label{est:10.40-1}
\end{equation}
and since $F_n\big(v,0\big)=0$,  we have
\begin{align}
\eqref{10.40-2}
=0.\label{est:10.40-2}
\end{align}

Now it counts down to handle the term \eqref{10.40-3} in $I_2(t_n)$. Firstly, note that
\begin{align*}
\partial_sF_n(v,s)=\fe^{(t_n+s)\partial_x^3}\partial_x\big(\fe^{-(t_n+s)\partial_x^3}v\big)^2,
\end{align*}
so by \eqref{par-x+x},
\begin{align*}
\partial_x^{-1}\partial_sF_n(v,s)=&\P\Big[\fe^{(t_n+s)\partial_x^3}\big(\fe^{-(t_n+s)\partial_x^3}v\big)^2\Big]\\
=&\fe^{(t_n+s)\partial_x^3}\big(\fe^{-(t_n+s)\partial_x^3}v\big)^2-\frac1{2\pi}\int_\T \fe^{(t_n+s)\partial_x^3}\big(\fe^{-(t_n+s)\partial_x^3}v\big)^2\,dx\\
=&\fe^{(t_n+s)\partial_x^3}\big(\fe^{-(t_n+s)\partial_x^3}v\big)^2-\frac1{2\pi}\int_\T v^2\,dx,
\end{align*}
with the operator $\P$ defined in (\ref{P def}).
Hence, we find
\begin{align}
&\eqref{10.40-3}-
\frac\tau{6\pi} \partial_x^{-1}v(t_n)\int_\T \left(v(t_n)\right)^2\,dx\nonumber\\
=&
-\frac13 \int_0^\tau \fe^{(t_{n}+s)\partial_x^3}\Big(
\fe^{-(t_{n}+s)\partial_x^3}\partial_x^{-1}v(t_n)\cdot \big(\fe^{-(t_n+s)\partial_x^3}v(t_n)\big)^2\Big)\,ds\label{10.40-3-2}.
\end{align}
Next, we compute the zero Fourier frequency of \eqref{10.40-3-2} and the others separately.
On one hand, by the formula in Lemma \ref{lem:1-form} (ii),  the zero frequency of \eqref{10.40-3-2} reads
\begin{align*}
\mathcal F\eqref{10.40-3-2}(0)
=&-\frac1{6\pi} \int_0^\tau \!\!\int_\T\fe^{(t_{n}+s)\partial_x^3}\Big(
\fe^{-(t_{n}+s)\partial_x^3}\partial_x^{-1}v(t_n)\cdot \big(\fe^{-(t_n+s)\partial_x^3}v(t_n)\big)^2\Big)\,dxds\notag\\
=&\frac1{18\pi}\int_\T\fe^{-t_{n+1}\partial_x^3}\partial_x^{-2}v(t_n)
\cdot\big(\fe^{-t_{n+1}\partial_x^3}\partial_x^{-1}v(t_n)\big)^2\,dx\notag\\
&-\frac1{18\pi}\int_\T\fe^{-t_{n}\partial_x^3}\partial_x^{-2}v(t_n)
\cdot\big(\fe^{-t_{n}\partial_x^3}\partial_x^{-1}v(t_n)\big)^2\,dx.
\end{align*}
By directly checking in the Fourier space, the right-hand side of the above is found simply as
\begin{align}
\mathcal F\eqref{10.40-3-2}(0)
=&-\mathcal F\eqref{est:10.40-1}(0).\label{zero-mod}
\end{align}
On the other hand, for $\xi\ne 0$ in the Fourier space of \eqref{10.40-3-2},  we have
\begin{align*}
\mathcal F\eqref{10.40-3-2}(\xi)
 =&\frac i3\int_0^\tau\!\!\int_{\xi=\xi_1+\xi_2+\xi_3\neq0} \frac{1}{\xi_1}\fe^{-i(t_n+s)\alpha_4}
 \widehat{v}(t_n,\xi_1)\widehat{v}(t_n,\xi_2)\widehat{v}(t_n,\xi_3)\,d(\xi_1)d(\xi_2)ds.
\end{align*}
By symmetry, the above is equal to
\begin{align*}
\frac i{9}\int_0^\tau\!\!\int_{\xi=\xi_1+\xi_2+\xi_3\neq0} \Big(\frac{1}{\xi_1}+\frac{1}{\xi_2}+\frac{1}{\xi_3}\Big) \fe^{-i(t_n+s)\alpha_4}\widehat{v}(t_n,\xi_1)\widehat{v}(t_n,\xi_2)\widehat{v}(t_n,\xi_3)\,d(\xi_1)d(\xi_2)ds,
\end{align*}
which by further noting \eqref{formu-a4}:
\begin{align*}
\frac{1}{\xi_1}+\frac{1}{\xi_2}+\frac{1}{\xi_3}
&=\frac{1}{\xi_1}+\frac{1}{\xi_2}+\frac{1}{\xi_3}-\frac1\xi+\frac1\xi\\
&=\frac{\xi\xi_1\xi_2+\xi\xi_1\xi_3+\xi\xi_2\xi_3-\xi_1\xi_2\xi_3}{\xi\xi_1\xi_2\xi_3} +\frac1\xi\\
&=\frac13\frac{\alpha_4}{\xi\xi_1\xi_2\xi_3} +\frac1\xi,
\end{align*}
we find that for $\xi\neq0$,
\begin{subequations}\label{2.15}
\begin{align}
\mathcal F\eqref{10.40-3-2}(\xi)
=&\frac i{27}\int_0^\tau\!\!\int_{\xi=\xi_1+\xi_2+\xi_3\neq0} \frac{\alpha_4}{\xi\xi_1\xi_2\xi_3} \fe^{-i(t_n+s)\alpha_4}\widehat{v}(t_n,\xi_1)
\widehat{v}(t_n,\xi_2)\widehat{v}(t_n,\xi_3)\,d(\xi_1)d(\xi_2)ds\label{11.08-1}\\
 &+\frac{i}{9\xi}\int_0^\tau\!\!\int_{\xi=\xi_1+\xi_2+\xi_3\neq0} \fe^{-i(t_n+s)\alpha_4}\widehat{v}(t_n,\xi_1)
 \widehat{v}(t_n,\xi_2)\widehat{v}(t_n,\xi_3)\,d(\xi_1)d(\xi_2)ds.\label{11.08-2}
\end{align}
\end{subequations}
For \eqref{11.08-1}, it is direct to get
\begin{align*}
\eqref{11.08-1}=
&-\frac 1{27}\!\!\int_{\xi=\xi_1+\xi_2+\xi_3\neq0} \frac1{\xi\xi_1\xi_2\xi_3}\Big( \fe^{-it_{n+1}\alpha_4}-\fe^{-it_{n}\alpha_4}\Big)\widehat{v}(t_n,\xi_1)
\widehat{v}(t_n,\xi_2)\widehat{v}(t_n,\xi_3)\,d(\xi_1)d(\xi_2),
\end{align*}
which clearly can be transformed back explicitly to the physical space.
To further make (\ref{11.08-2}) explicitly integrable in the physical space, we just freeze the phase function in the integrand: $\fe^{-i(t_n+s)\alpha_4}\approx\fe^{-it_n\alpha_4}$, and simply obtain
\begin{align*}
\eqref{11.08-2}\approx
&\frac {i\tau}{9\xi}\int_{\xi=\xi_1+\xi_2+\xi_3\neq0} \fe^{-it_{n}\alpha_4}\widehat{v}(t_n,\xi_1)\widehat{v}(t_n,\xi_2)\widehat{v}(t_n,\xi_3)\,d(\xi_1)d(\xi_2).
\end{align*}
This leads to a total $O(\tau^2)$ truncation, but remarkably, thanks to (\ref{formu-a4}) and the factor $\frac{1}{\xi}$ in front, only one spatial derivative is lost in this approximation.
Combining the above two findings for (\ref{2.15}), we obtain that for $\xi\ne 0$,
\begin{align*}
\mathcal F\eqref{10.40-3-2}(\xi)
\approx&-\frac 1{27}\!\!\int_{\xi=\xi_1+\xi_2+\xi_3\neq0} \frac1{\xi\xi_1\xi_2\xi_3}\Big( \fe^{-it_{n+1}\alpha_4}-\fe^{-it_{n}\alpha_4}\Big)
\widehat{v}(t_n,\xi_1)\widehat{v}(t_n,\xi_2)\widehat{v}(t_n,\xi_3)\,d(\xi_1)d(\xi_2)\\
 &
+\frac {i\tau}{9\xi}\int_{\xi=\xi_1+\xi_2+\xi_3\neq0} \fe^{-it_{n}\alpha_4}\widehat{v}(t_n,\xi_1)\widehat{v}(t_n,\xi_2)\widehat{v}(t_n,\xi_3)\,d(\xi_1)d(\xi_2).
\end{align*}
In total, the inverse Fourier transform of the above together with \eqref{zero-mod} leads to
\begin{align*}
\eqref{10.40-3-2}
\approx&-\mathcal F\eqref{est:10.40-1}(0)+\frac1{27}\fe^{t_{n}\partial_x^3}\partial_x^{-1}
\Big(\fe^{-t_{n}\partial_x^3}\partial_x^{-1}v(t_n)\Big)^3\nonumber\\
&
-\frac1{27}\fe^{t_{n+1}\partial_x^3}\partial_x^{-1}
\Big(\fe^{-t_{n+1}\partial_x^3}\partial_x^{-1}v(t_n)\Big)^3-\frac{\tau}{9}\partial_x^{-1} \fe^{t_n\partial_x^3} \big(\fe^{-t_n\partial_x^3}v(t_n)\big)^3,
\end{align*}
and this completes our approximation of (\ref{10.40-3}) defined in the physical space.

Finally, combining the above with \eqref{est:10.40-1} and \eqref{est:10.40-2} and noting our projection operator $
\P $ defined in (\ref{P def}), we derive the following approximation from (\ref{10.40}) as
\begin{align*}
I_2(t_n)
\approx&\frac1{9}\P\Big[\fe^{t_{n+1}\partial_x^3}\Big(\fe^{-t_{n+1}\partial_x^3}
\partial_x^{-1}v(t_n)\cdot\partial_x^{-1}\big(\fe^{-t_{n+1}\partial_x^3}
\partial_x^{-1}v(t_n)\big)^2\Big)\Big]\\
&-\frac1{9}\P\Big[\fe^{t_{n+1}\partial_x^3}\Big(\fe^{-t_{n+1}\partial_x^3}
\partial_x^{-1}v(t_n)
\cdot\fe^{-\tau\partial_x^3}\partial_x^{-1}\big(\fe^{-t_{n}\partial_x^3}
\partial_x^{-1}v(t_n)\big)^2\Big)\Big]\\
 &+\frac1{27}\fe^{t_{n}\partial_x^3}\partial_x^{-1}
 \Big(\fe^{-t_{n}\partial_x^3}\partial_x^{-1}v(t_n)\Big)^3
-\frac1{27}\fe^{t_{n+1}\partial_x^3}\partial_x^{-1}
\Big(\fe^{-t_{n+1}\partial_x^3}\partial_x^{-1}v(t_n)\Big)^3\\
&-\frac\tau{9}\partial_x^{-1} \fe^{t_n\partial_x^3} \big(\fe^{-t_n\partial_x^3}v(t_n)\big)^3
+\frac\tau{6\pi} \partial_x^{-1}v(t_n)\int_\T \left(v(t_n)\right)^2\,dx,\quad n\geq0.
\end{align*}
This together with the equality for $I_1(t_n)$ in (\ref{est:15.15-1}) ends up as a first order numerical scheme for the mild solution $v^n=v^n(x)\approx v(t,x)$ of  (\ref{vt eq}): $n\geq0$,
\begin{align}
v^{n+1}=&v^n-\frac16\fe^{t_n\partial_x^3}\left(\fe^{-t_n\partial_x^3}
\partial_x^{-1}v^n\right)^2
+\frac16\fe^{t_{n+1}\partial_x^3}\left(\fe^{-t_{n+1}\partial_x^3}
\partial_x^{-1}v^n\right)^2\nonumber\\
&+\frac1{18}\P\left[\fe^{t_{n+1}\partial_x^3}\left(\fe^{-t_{n+1}\partial_x^3}
\partial_x^{-1}v^n\cdot \partial_x^{-1}\left(\fe^{-t_{n+1}\partial_x^3}\partial_x^{-1}v^n\right)^2\right)
\right]\nonumber\\
&-\frac1{18}\P\left[
\fe^{t_{n+1}\partial_x^3}\left(\fe^{-t_{n+1}\partial_x^3}\partial_x^{-1}v^n\cdot \fe^{-\tau\partial_x^3}\partial_x^{-1}\left(\fe^{-t_{n}\partial_x^3}
\partial_x^{-1}v^n\right)^2\right)\right]\nonumber\\
&+\frac1{54}\fe^{t_{n}\partial_x^3}\partial_x^{-1}\left(\fe^{-t_{n}\partial_x^3}
\partial_x^{-1}v^n\right)^3
-\frac1{54}\fe^{t_{n+1}\partial_x^3}\partial_x^{-1}\left(\fe^{-t_{n+1}\partial_x^3}
\partial_x^{-1}v^n\right)^3\nonumber\\
&+\frac\tau{12\pi} \partial_x^{-1}v^n\int_\T \left(v^n\right)^2\,dx
-\frac\tau {18}\partial_x^{-1} \fe^{t_n\partial_x^3} \left(\fe^{-t_n\partial_x^3}v^n\right)^3.\label{lem:LRI-1or}
\end{align}

By reversing the change of unknown (\ref{twist}) in (\ref{lem:LRI-1or}), we write down the scheme of the \emph{first order embedded low-regularity integrator (ELRI1)} for solving the KdV equation (\ref{model}): denoting
$u^n=u^n(x)\approx u(t_n,x)$ as the numerical solution,  for $n=0,1,\ldots$,
\begin{align}\label{def:LRI-1ord scheme}
u^{n+1}=&\fe^{-\tau\partial_x^3}u^n-\frac16\fe^{-\tau\partial_x^3}
\left(\partial_x^{-1}u^n\right)^2
+\frac16\left(\fe^{-\tau\partial_x^3}\partial_x^{-1}u^n\right)^2
+\frac1{18}\P\left[\fe^{-\tau\partial_x^3}\partial_x^{-1}u^n\cdot \partial_x^{-1}\left(\fe^{-\tau\partial_x^3}\partial_x^{-1}u^n\right)^2
\right]\nonumber\\
&-\frac1{18}\P\left[
\fe^{-\tau\partial_x^3}\partial_x^{-1}{u}^n\cdot \fe^{-\tau\partial_x^3}\partial_x^{-1}\left(\partial_x^{-1}u^n\right)^2\right]
+\frac1{54}\fe^{-\tau\partial_x^3}\partial_x^{-1}\left(\partial_x^{-1}u^n\right)^3
-\frac1{54}\partial_x^{-1}\left(\fe^{-\tau\partial_x^3}\partial_x^{-1}u^n\right)^3
\notag\\
&+\frac\tau{12\pi} \fe^{-\tau\partial_x^3} \partial_x^{-1}u^n\int_\T \left(u^n\right)^2\,dx
-\frac\tau {18}\partial_x^{-1} \fe^{-\tau\partial_x^3} \left(u^n\right)^3\nonumber\\
=:&ELRI1(u^n),
\end{align}
with $
\P $  in (\ref{P def}).

The above ELRI1 scheme (\ref{def:LRI-1ord scheme}) is clearly fully explicit and  efficient, since it is defined point-wisely in the physical space. All the involved pseudo-differentiation operators and the projection $\P$ in the scheme can be implemented by the Fourier pseudo-spectral method \cite{Shen}. Therefore, thanks to the fast Fourier transform (FFT), the computational cost per time step is $O(N\log N)$ with $N>0$ the number of spatial grids in practice. Also, it is clear that the ELRI1 (\ref{def:LRI-1ord scheme}) preserves the mean-value of solution of the KdV equation (\ref{model}) at the discrete level, i.e.
$$\int_\bT u^n(x)dx\equiv\int_\bT u_0(x)dx,\quad n=0,1,\ldots. $$

\begin{remark}
  The existing first order low-regularity integrator (LRI1) proposed in \cite{kdv-kath} reads
  $$u^{n+1}=\fe^{-\tau\partial_x^3}u^n-\frac16\fe^{-\tau\partial_x^3}
\left(\partial_x^{-1}u^n\right)^2
+\frac16\left(\fe^{-\tau\partial_x^3}\partial_x^{-1}u^n\right)^2,\quad n=0,1,\ldots.$$
Comparing to (\ref{def:LRI-1ord scheme}), our ELRI1 scheme  can be viewed as adding some correction terms to the above LRI1 in order to save the regularity in rough solution situation. Note that all the added correction terms can be implemented efficiently under FFT. Although the scheme of ELRI1 involves more terms than LRI1, with the gain in accuracy, ELRI1 is in fact much more efficient than  LRI1. This will be illustrated in Section \ref{sec:numerical}.
\end{remark}

\subsection{Second order scheme}
To further get the second-order accuracy, we allow ourself to ask for more regularity of the solution than the first order scheme. This is practically reasonable, because the accuracy order of the scheme  in time and in space need to be rather equal.

We follow the same derivation as for the first order scheme all the way till (\ref{2.15}). Note that the dropped term  in (\ref{v app1}) is a $O(\tau^3)$ truncation if two spatial derivatives are paid. Now in \eqref{11.08-2}:
$$\frac{i}{9\xi}\int_0^\tau\!\!\int_{\xi=\xi_1+\xi_2+\xi_3\neq0} \fe^{-i(t_n+s)\alpha_4}\widehat{v}(t_n,\xi_1)
 \widehat{v}(t_n,\xi_2)\widehat{v}(t_n,\xi_3)\,d(\xi_1)d(\xi_2)ds,$$
  we need to make a more accurate approximation of the phase function $\fe^{-i(t_n+s)\alpha_4}$.
Note that a direct higher order Taylor's expansion e.g.
$\fe^{-is\alpha_4}= 1-is\alpha_4+O(s^2\alpha_4^2)$ may not work well, since the explicit spatial derivative will come out which causes stability issue in the final scheme.
 To avoid the stability concern, we introduce a filtered mid-point rule to the linear part of the above Taylor's expansion, i.e. $$is\alpha_4\approx \frac{i\tau}{2}
 \alpha_4\fe^{-is\alpha_4},\quad 0\leq s\leq \tau,$$
  and we obtain the following approximation:
\begin{align*}
 &\eqref{11.08-2}\approx\frac {i}{9\xi}\int_0^\tau
\int_{\xi=\xi_1+\xi_2+\xi_3\neq0}\fe^{-it_n\alpha_4}\Big(1-\frac12 i\tau\alpha_4\>\fe^{-is\alpha_4}\Big) \widehat{v}(t_n,\xi_1)\widehat{v}(t_n,\xi_2)
\widehat{v}(t_n,\xi_3)\,d(\xi_1)d(\xi_2)\,ds.
\end{align*}
Then the right-hand side term is a $O(\tau^3)$ approximation with three additional derivatives required (see in detailed proof later), and it can be expressed explicitly in the physical space:
$$
-\frac\tau {9}\partial_x^{-1} \fe^{t_n\partial_x^3} \big(\fe^{-t_n\partial_x^3}{ v}(t_n)\big)^3
+\frac{\tau}{18}\fe^{t_n\partial_x^3}\partial_x^{-1}\Big(\fe^{-t_n\partial_x^3}{ v}(t_n)\Big)^3
  -\frac{\tau}{18}\fe^{t_{n+1}\partial_x^3}\partial_x^{-1}\Big(\fe^{-t_{n+1}\partial_x^3}{ v}(t_n)\Big)^3.
$$
 Then we can end up with a second order scheme for the mild solution $v^n=v^n(x)\approx v(t,x)$ in (\ref{vt eq}): $n\geq0$,
\begin{align}
v^{n+1}=&v^n-\frac16\fe^{t_n\partial_x^3}\left(\fe^{-t_n\partial_x^3}
\partial_x^{-1}{v}^n\right)^2
+\frac16\fe^{t_{n+1}\partial_x^3}\left(\fe^{-t_{n+1}\partial_x^3}
\partial_x^{-1}{v}^n\right)^2\nonumber\\
&+\frac1{18}\P\left[\fe^{t_{n+1}\partial_x^3}\left(\fe^{-t_{n+1}\partial_x^3}
\partial_x^{-1}{v}^n\cdot \partial_x^{-1}\left(\fe^{-t_{n+1}\partial_x^3}\partial_x^{-1}{v}^n\right)^2
\right)\right]\nonumber\\
&-\frac1{18}\P\left[
\fe^{t_{n+1}\partial_x^3}\left(\fe^{-t_{n+1}\partial_x^3}\partial_x^{-1}{v}^n\cdot \fe^{-\tau\partial_x^3}\partial_x^{-1}\left(\fe^{-t_{n}\partial_x^3}
\partial_x^{-1}{v}^n\right)^2\right)\right]\nonumber\\
&+\frac1{54}\fe^{t_{n}\partial_x^3}\partial_x^{-1}
\left(\fe^{-t_{n}\partial_x^3}\partial_x^{-1}{v}^n\right)^3
-\frac1{54}\fe^{t_{n+1}\partial_x^3}\partial_x^{-1}
\left(\fe^{-t_{n+1}\partial_x^3}\partial_x^{-1}{v}^n\right)^3
\nonumber\\
&+\frac\tau{12\pi} \partial_x^{-1}{ v}^n\int_\T \left(v^n\right)^2\,dx
-\frac\tau {36}\partial_x^{-1} \fe^{t_n\partial_x^3}\left(\fe^{-t_n\partial_x^3}{ v}^n\right)^3
-\frac{\tau}{36}\fe^{t_{n+1}\partial_x^3}\partial_x^{-1}
  \left(\fe^{-t_{n+1}\partial_x^3}{ v}^n\right)^3.
  \label{lem:LRI-2or}
\end{align}
In fact, compared with (\ref{lem:LRI-1or}), the second order scheme above can be viewed as simply adding two (or one, since the last term in (\ref{lem:LRI-1or}) is emerged with the first correction) more correction term(s) to the first order scheme (\ref{lem:LRI-1or}).

Similarly as before by reversing the change of unknown, the \emph{second order embedded low-regularity integrator (ELRI2)} for solving the KdV equation (\ref{model}) reads: denoting
$u^n=u^n(x)\approx u(t_n,x)$ as the numerical solution,  for $n=0,1,\ldots$,
\begin{align}\label{def:LRI-2ord scheme}
{u}^{n+1}=
ELRI1(u^n)+\frac{\tau}{36}\fe^{-\tau\partial_x^3}\partial_x^{-1}\Big({ u}^n\Big)^3
  -\frac{\tau}{36}\partial_x^{-1}\Big(\fe^{-\tau\partial_x^3}
  {u}^n\Big)^3,
\end{align}
with $ELRI1(\cdot)$ defined in (\ref{def:LRI-1ord scheme}).

The above ELRI2 scheme is also explicit and efficient (almost the same) as ELRI1,  and it  preserves the mean-value of the solution of the KdV equation (\ref{model}) at the discrete level as well.

\subsection{Main convergence results}
Now, we state the convergence theorems of the proposed ELRI1 method (\ref{def:LRI-1ord scheme}) and ELRI2 method (\ref{def:LRI-2ord scheme}) as the \textbf{main results} of the paper. The proofs will be given in the next two sections.

\begin{theorem}\label{thm:convergence}
Let $u^n$ be the numerical solution from the ELRI1 scheme \eqref{def:LRI-1ord scheme} for the KdV equation (\ref{model}) with $\widehat{u_0}(0)=0$ up to some fixed time $T>0$. Under assumption that $u_0\in H^{\gamma+1}(\bT)$ for some $\gamma>\frac12$,  there exist constants $\tau_0,C>0$ such that for  any $0<\tau\leq\tau_0$ we have
\begin{equation}\label{main result}
  \|u(t_n,\cdot)-u^n\|_{H^\gamma}\leq C\tau,\quad n=0,1\ldots,\frac{T}{\tau},
\end{equation}
 where the constants $\tau_0$ and $C$ depend only on $T$ and $\|u\|_{L^\infty((0,T);H^{\gamma+1})}$.
\end{theorem}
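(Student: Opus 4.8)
The plan is to carry out the by-now standard ``local error $+$ stability $\Rightarrow$ global error'' argument (a discrete Gronwall / Lady Windermere's fan) at the level of the twisted unknown $v=\fe^{t\partial_x^3}u$. Since $\fe^{\pm t\partial_x^3}$ is an isometry of every $H^s(\bT)$ and $u^n=\fe^{-t_n\partial_x^3}v^n$, it suffices to prove $\|v(t_n)-v^n\|_{H^\gamma}\lesssim\tau$, where $v^n$ is the scheme \eqref{lem:LRI-1or}. Throughout, the two places the hypothesis $\gamma>\tfrac12$ enters are: $H^\gamma(\bT)$ is a Banach algebra, and $v\in H^{\gamma+1}$ guarantees $\sum_\xi\langle\xi\rangle\,|\widehat v(\xi)|<\infty$; both facts will be used repeatedly, together with $\|v\|_{L^\infty((0,T);H^{\gamma+1})}=\|u\|_{L^\infty((0,T);H^{\gamma+1})}<\infty$ from the local well-posedness theory recalled in the introduction.

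\emph{Local error.} Write $\Phi^n_\tau$ for the map given by the right-hand side of \eqref{lem:LRI-1or}, so $v^{n+1}=\Phi^n_\tau(v^n)$, and set $\mathcal E^n:=v(t_{n+1})-\Phi^n_\tau(v(t_n))$. By inspection of the derivation, $\mathcal E^n$ equals exactly the sum of the three approximations made: (i) replacing $v(t_n+\rho)$ by $v(t_n)$ in the nested integral of \eqref{nest v}; (ii) discarding the triple-nested $O(\tau^3)$ term in passing from \eqref{v app1} to \eqref{v app}; (iii) freezing the phase $\fe^{-i(t_n+s)\alpha_4}\approx\fe^{-it_n\alpha_4}$ in \eqref{11.08-2}; every other step in the derivation (the closed forms of $I_1$ and $F_n$, Lemma \ref{lem:1-form}(i)--(ii), the symmetrisation, the exact integration of \eqref{11.08-1}) is an identity. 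I would bound each of (i)--(iii) in $H^\gamma$ by $\tau^2$ times a polynomial in $\|v\|_{H^{\gamma+1}}$, i.e. with the loss of at most one spatial derivative. Term (ii) is the easiest: using the closed form \eqref{def-Fn}, the discarded piece is $\tfrac12\int_0^\tau\fe^{(t_n+s)\partial_x^3}\partial_x(\fe^{-(t_n+s)\partial_x^3}G_s)^2\,ds$ with $G_s=\tfrac12F_n(v(t_n),s)$ carrying \emph{no} net derivative and $G_s=O(s)$, whence $\|\partial_x(G_s^2)\|_{H^\gamma}\lesssim\|G_s\|_{H^{\gamma+1}}^2\lesssim\|v\|_{H^\gamma}^4$ and the $s$-integration yields $O(\tau^3)$. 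For term (iii) one uses $\fe^{-is\alpha_4}-1=O(s|\alpha_4|)$ and the factor $\tfrac1\xi$ already in front of \eqref{11.08-2}, so the error multiplier is $\tfrac{\alpha_4}{\xi}$; combining \eqref{formu-a4} with a bilinear/trilinear multiplier estimate in periodic Sobolev spaces shows this costs exactly one derivative, the only delicate regime being $|\xi|\ll\max_j|\xi_j|$, which is harmless since there $\langle\xi\rangle^\gamma\sim1$ and one only needs the $\ell^1$-summability of $\langle\xi\rangle\widehat v$.

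\emph{The delicate term (i).} Naively, replacing $v(t_n+\rho)$ by $v(t_n)$ costs two or three derivatives, because $R_\rho:=v(t_n+\rho)-v(t_n)=\int_0^\rho\partial_\sigma v(t_n+\sigma)\,d\sigma$ already contains one derivative through $\partial_\sigma v=\tfrac12\fe^{(t_n+\sigma)\partial_x^3}\partial_x(\fe^{-(t_n+\sigma)\partial_x^3}v(t_n+\sigma))^2$, and it is then fed into a further $\partial_x$. The remedy is to apply the integration-by-parts of Lemma \ref{lem:1-form}(i) once more to this error: the error is the $s$-integral of $\fe^{(t_n+s)\partial_x^3}\partial_x[\,\cdots\,]$ whose two factors, $\tilde G_s-G_s$ and $2v(t_n)+\tilde G_s+G_s$, are both mean-zero, so Lemma \ref{lem:1-form}(i) turns it into boundary terms plus a new $s$-integral, all carrying $\partial_x^{-1}$. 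Crucially, every such $\partial_x^{-1}$ lands on a factor that itself is $\partial_x$ of something (from the nested Duhamel structure), so $\partial_x^{-1}\partial_x=\P$ by \eqref{par-x+x} and \emph{no derivative is actually spent} by this second integration-by-parts: $\partial_x^{-1}(\tilde G_s-G_s)=\tfrac12\int_0^s\fe^{(t_n+\rho)\partial_x^3}\P\!\left[\fe^{-(t_n+\rho)\partial_x^3}R_\rho\cdot\fe^{-(t_n+\rho)\partial_x^3}(v(t_n+\rho)+v(t_n))\right]d\rho$, and similarly for the other $\partial_x^{-1}$-factors. After this rearrangement every factor is controlled by $\|v\|_{H^\gamma}$ except the single $R_\rho$, for which $\|R_\rho\|_{H^\gamma}\lesssim\rho\,\|v\|_{H^{\gamma+1}}^2$ (one derivative, from $\partial_x(v^2)$); with $H^\gamma$ an algebra this gives $\|\mathcal E^n\|_{H^\gamma}\lesssim\tau^2\,P\!\left(\|v\|_{L^\infty_TH^{\gamma+1}}\right)$.

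\emph{Stability, induction, conclusion.} The map $\Phi^n_\tau$ is a fixed cubic polynomial in its argument built from the isometries $\fe^{\pm t_k\partial_x^3}$, the bounded operators $\partial_x^{-1}$ and $\P$, and pointwise multiplication; hence, using that $H^\gamma(\bT)$ is an algebra for $\gamma>\tfrac12$, $\Phi^n_\tau$ is Lipschitz on each ball of $H^\gamma$ with constant $1+C\tau$, where $C$ depends only on the radius. Writing $e^n:=v(t_n)-v^n$, one has $e^{n+1}=\mathcal E^n+\big(\Phi^n_\tau(v(t_n))-\Phi^n_\tau(v^n)\big)$, so as long as $v^n$ stays in a fixed ball around $\|v\|_{L^\infty_TH^\gamma}$ we get $\|e^{n+1}\|_{H^\gamma}\le C\tau^2+(1+C\tau)\|e^n\|_{H^\gamma}$; the discrete Gronwall inequality yields $\|e^n\|_{H^\gamma}\le C\tau$ for $n\tau\le T$, and a bootstrap (this bound keeps $\|v^n\|_{H^\gamma}\le\|v\|_{L^\infty_TH^\gamma}+1$ once $\tau\le\tau_0$) closes the induction. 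Undoing the twist gives \eqref{main result}. The main obstacle is term (i): recognising that this single dangerous approximation must be re-expanded by a \emph{second} integration-by-parts so that the spurious derivatives produced by the Burgers nonlinearity are swallowed by $\partial_x^{-1}$'s via $\partial_x^{-1}\partial_x=\P$, leaving precisely one derivative of loss; the rest is bookkeeping with product estimates in the algebra $H^\gamma$ and a routine Gronwall argument.
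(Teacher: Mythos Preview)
Your local error analysis is essentially the paper's: the paper packages your ``second integration-by-parts'' into the bilinear Lemma~\ref{lem:bi-est} (itself proved via Lemma~\ref{lem:1-form}(i)) and then applies it with $f=v(t_n+s)-v(t_n)-\tfrac12F_n$ and $g=v(t_n+s)+v(t_n)+\tfrac12F_n$; your terms (ii) and (iii) correspond to the paper's $\mathcal L_2^n$ and $\mathcal L_3^n$. (A minor imprecision: term~(ii) gives $O(\tau^2)$ with $H^{\gamma+1}$ data, not $O(\tau^3)$, and the same integration-by-parts is needed there as well; the naive bound $\|\partial_x(G_s^2)\|_{H^\gamma}\lesssim\|G_s\|_{H^{\gamma+1}}^2$ carries no $s$-factor because the closed form of $F_n$ is not small in $s$ in $H^{\gamma+1}$.)

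The stability step, however, has a genuine gap. It is \emph{not} true that the nonlinear part of \eqref{lem:LRI-1or} is $O(\tau)$-Lipschitz on $H^\gamma$-balls merely because it is ``a cubic polynomial built from isometries, $\partial_x^{-1}$, $\P$, and multiplication''. Consider already the quadratic piece
\[
-\tfrac16\,\fe^{t_n\partial_x^3}\!\left(\fe^{-t_n\partial_x^3}\partial_x^{-1}v\right)^{2}
+\tfrac16\,\fe^{t_{n+1}\partial_x^3}\!\left(\fe^{-t_{n+1}\partial_x^3}\partial_x^{-1}v\right)^{2}:
\]
term by term the Lipschitz constant on a ball of radius $R$ is $\sim R$, not $\sim\tau R$, and there is no elementary cancellation between the two summands; their sum equals $\tfrac12\int_0^\tau\fe^{(t_n+s)\partial_x^3}\partial_x(\fe^{-(t_n+s)\partial_x^3}v)^2\,ds$, which does carry a $\tau$ but also an explicit $\partial_x$ that cannot be absorbed with only $v^n\in H^\gamma$. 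The same issue applies to the cubic terms without an explicit $\tau$-factor. With a Lipschitz constant $1+C$ in place of $1+C\tau$, the Gronwall step collapses after $O(1)$ iterations.

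The paper resolves this by an energy argument rather than a direct Lipschitz bound: it squares the norm and treats the dangerous cross term $\langle J^\gamma\Phi_1^n,J^\gamma e_n\rangle$ via the commutator estimate of Lemma~\ref{lem:kato-Ponce-2}/Lemma~\ref{lm3.5}. In the pairing $\langle J^\gamma\partial_x(e_n\cdot v(t_n)),\,J^\gamma e_n\rangle$ the commutator allows one to remove $J^\gamma\partial_x$ from $e_n$ at the price of $\|v(t_n)\|_{H^{\gamma+1}}$ (the regularity of the \emph{exact} solution, which is available), and the remaining diagonal piece $\int\partial_x v\cdot|J^\gamma e_n|^2$ is killed by one spatial integration-by-parts. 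For the pure square $\|\Phi_1^n\|_{H^\gamma}^2$, Lemma~\ref{lem:bi-est}(i) only yields $\sqrt\tau$-smallness, which is why the paper's actual stability estimate (Lemma~\ref{lem:stability}) is
\[
\|\Phi^n(v^n)-\Phi^n(v(t_n))\|_{H^\gamma}\le\bigl(1+C\tau+C\sqrt\tau\,\|e_n\|_{H^\gamma}+C\tau\,\|e_n\|_{H^\gamma}^4\bigr)\|e_n\|_{H^\gamma},
\]
and why the closing induction must first bootstrap $\|e_n\|_{H^\gamma}\lesssim\tau$ so that the extra $\sqrt\tau\,\|e_n\|_{H^\gamma}$ term can be absorbed into $C\tau$. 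Your one-line $1+C\tau$ Lipschitz claim misses this mechanism.
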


\begin{theorem}\label{thm:convergence-2ord}
Let $u^n$ be the numerical solution  from the ELRI2 scheme  \eqref{def:LRI-2ord scheme} for the KdV equation (\ref{model}) with $\widehat{u_0}(0)=0$ up to some fixed time $T>0$. Under assumption that $u_0\in H^{\gamma+3}(\bT)$ for some $\gamma\ge 0$,  there exist constants $\tau_0,C>0$ such that for  any $0<\tau\leq\tau_0$ we have
\begin{equation}\label{main result-2}
  \|u(t_n,\cdot)-u^n\|_{H^\gamma}\leq C\tau^2,\quad n=0,1\ldots,\frac{T}{\tau},
\end{equation}
 where the constants $\tau_0$ and $C$ depend only on $T$ and $\|u\|_{L^\infty((0,T);H^{\gamma+3})}$.
\end{theorem}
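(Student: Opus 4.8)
The plan is a bootstrap argument on the twisted variable, combining a third‑order local truncation estimate with a Lipschitz‑type stability bound, together with an a priori $H^{\gamma+3}$‑bound on the numerical solution obtained by comparison with the (globally bounded) exact flow.

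First I would pass to $v(t):=\fe^{t\partial_x^3}u(t)$ and $v^n:=\fe^{t_n\partial_x^3}u^n$; since $\fe^{t\partial_x^3}$ is an isometry of every $H^\gamma(\bT)$, it suffices to bound $\|v(t_n)-v^n\|_{H^\gamma}$, where $v^n$ is produced by \eqref{lem:LRI-2or}. Because $u_0\in H^{\gamma+3}$ and KdV is globally well posed on the torus, $M_0:=\|v\|_{L^\infty((0,T);H^{\gamma+3})}=\|u\|_{L^\infty((0,T);H^{\gamma+3})}<\infty$, and all constants below depend only on $M_0$ and $T$. Write \eqref{lem:LRI-2or} as $v^{n+1}=\Phi^\tau_{t_n}(v^n)$ and set $\mathcal R^n:=v(t_{n+1})-\Phi^\tau_{t_n}(v(t_n))$. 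Note that the mean‑zero condition $\widehat v(t,0)=\widehat{v^n}(0)=0$ persists, so every $\partial_x^{-1}$ (which maps $H^{s-1}$ onto $H^{s}$ on mean‑zero functions) is well defined throughout.

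\textbf{Local error.} Tracking the derivation \eqref{nest v}$\to$\eqref{lem:LRI-2or}, $\mathcal R^n$ is the sum of exactly three truncations: (a) replacing $v(t_n+\rho)$ by $v(t_n)$ inside the nested integral in \eqref{v app1}; (b) dropping the cubic $O(\tau^3)$ term when passing from \eqref{v app1} to \eqref{v app}; and (c) the filtered mid‑point substitution $\fe^{-i(t_n+s)\alpha_4}\mapsto\fe^{-it_n\alpha_4}\bigl(1-\tfrac{i\tau}{2}\alpha_4\,\fe^{-is\alpha_4}\bigr)$ in \eqref{11.08-2}. For (a) I would use $\partial_tv=\tfrac12\fe^{t\partial_x^3}\partial_x(\fe^{-t\partial_x^3}v)^2$ to get $\|v(t_n+\rho)-v(t_n)\|_{H^{\gamma+2}}\lesssim\rho\,M_0^2$, then push this through the two $\partial_x$'s and two uses of the algebra property of $H^{s}$, $s\geq\gamma+1>\tfrac12$, obtaining an $H^\gamma$‑bound $\lesssim\tau^3$ — this is the term that uses the full three‑derivative budget. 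For (b), $F_n(v,s)$ from \eqref{def-Fn} is $O(s)$ in $H^{\gamma+1}$, so the dropped square costs only two derivatives and is again $\lesssim\tau^3$ in $H^\gamma$. The delicate one is (c): expanding $\tfrac{1-\fe^{-i\theta}}{i\theta}-\tfrac12(1+\fe^{-i\theta})$ with $\theta=\tau\alpha_4$ shows the mismatch multiplier is $O\!\left(\tau\min(\tau^2\alpha_4^2,1)\right)$ — the linear‑in‑$s$ part cancels precisely because of the mid‑point symmetry — so in Fourier the truncation is $\lesssim \tau^3|\alpha_4|^2/|\xi|$ against $|\widehat v(\xi_1)\widehat v(\xi_2)\widehat v(\xi_3)|$, and the crux is the trilinear estimate
\[
\Big\|\langle\xi\rangle^{\gamma}\,\frac{|\alpha_4|^2}{|\xi|}\int_{\xi=\xi_1+\xi_2+\xi_3}|\widehat v(\xi_1)|\,|\widehat v(\xi_2)|\,|\widehat v(\xi_3)|\,(d\xi_1)(d\xi_2)\Big\|_{L^2((d\xi))}\lesssim \|v\|_{H^{\gamma+3}}^3 .
\]
I would prove it by inserting $\alpha_4=3(\xi\xi_1\xi_2+\xi\xi_1\xi_3+\xi\xi_2\xi_3-\xi_1\xi_2\xi_3)$ and splitting into the non‑resonant regime, where $|\xi|,|\xi_1|,|\xi_2|,|\xi_3|$ are comparable, and the resonant regime, where two large frequencies nearly cancel and $|\alpha_4|$ is far below its naive size; in both the exponents balance exactly at $\gamma+3$ for $\gamma\geq0$. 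Altogether $\|\mathcal R^n\|_{H^\gamma}\lesssim\tau^3$.

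\textbf{Stability, a priori bound, and induction.} Writing $\Phi^\tau_{t_n}(w)=w+\tau N_n(w)$ with $N_n$ a sum of quadratic and cubic terms each carrying at least one $\partial_x^{-1}$, I would first show $\|\Phi^\tau_{t_n}(w)-\Phi^\tau_{t_n}(\widetilde w)\|_{H^\gamma}\leq(1+C(R)\tau)\|w-\widetilde w\|_{H^\gamma}$ whenever $\|w\|_{H^{\gamma+3}},\|\widetilde w\|_{H^{\gamma+3}}\leq R$: the smoothing $\partial_x^{-1}$ cancels the stray $\partial_x$ from the Burgers nonlinearity, and polarizing $a^2-b^2$, $a^3-b^3$ together with the $H^{\gamma+1}$‑algebra reduces every difference to $\|w-\widetilde w\|_{H^\gamma}$ times bounded factors. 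With $e_n:=v(t_n)-v^n$ and $e_{n+1}=\bigl(\Phi^\tau_{t_n}(v(t_n))-\Phi^\tau_{t_n}(v^n)\bigr)+\mathcal R^n$, a discrete Gronwall inequality then gives $\|e_n\|_{H^\gamma}\leq C\tau^2$, \emph{provided} $v^n$ stays in a fixed ball of $H^\gamma$ and of $H^{\gamma+3}$. The $H^\gamma$‑ball is immediate from $\|e_n\|_{H^\gamma}\leq C\tau^2$. For the $H^{\gamma+3}$‑ball a brute‑force Gronwall fails (the cubic terms force only short‑time control); instead, for each increment I split $N_n(v^n)=N_n(v(t_n))+\bigl(N_n(v^n)-N_n(v(t_n))\bigr)$, bound the first piece by $M_0^2+M_0^3$ uniformly, and in the second piece polarize so one factor is always at $v(t_n)$: then $\|N_n(v^n)-N_n(v(t_n))\|_{H^{\gamma+3}}$ is controlled by $\|e_n\|_{H^{\gamma+3}}\bigl(\leq M_0+\|v^n\|_{H^{\gamma+3}}\bigr)$ times only a low (sub‑$H^\gamma$, or interpolated) norm of $v(t_n),v^n$, hence by $M_0$ up to $o(1)$, using $\|e_n\|_{L^\infty}\lesssim\|e_n\|_{H^\gamma}^{1-\theta}\|e_n\|_{H^{\gamma+3}}^{\theta}\lesssim\tau^{2(1-\theta)}(M_0+\|v^n\|_{H^{\gamma+3}})^{\theta}$. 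Thus the increment of $\|v^n\|_{H^{\gamma+3}}$ is \emph{linear} in $\|v^n\|_{H^{\gamma+3}}$, and a discrete Gronwall yields $\|v^n\|_{H^{\gamma+3}}\leq \fe^{CT}(M_0+1)=:M$ uniformly on $[0,T]$. Running the two claims $\|e_k\|_{H^\gamma}\leq C\tau^2$ and $\|v^k\|_{H^{\gamma+3}}\leq M$ together by induction on $k$, with $\tau_0$ small enough that the $o(1)$ and $C\tau$ factors are absorbed, closes the loop, and undoing the twist gives \eqref{main result-2}.

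\textbf{Main obstacles.} The two genuinely nontrivial points are the tight trilinear multiplier estimate for truncation (c), where the filtered mid‑point rule must recover exactly three — and not more than three — spatial derivatives, forcing one to exploit the precise cubic resonance identity for $\alpha_4$; and the coupled a priori estimate, where $H^{\gamma+3}$‑stability of the numerical solution on the whole interval $[0,T]$ (rather than only for short time) is obtained solely by comparison with the globally bounded exact solution.
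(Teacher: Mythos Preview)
Your local error analysis is essentially on target and matches the paper's decomposition $\widetilde{\mathcal L}^n=\mathcal L_1^n+\mathcal L_2^n+\widetilde{\mathcal L}_3^n$; the trilinear estimate for (c) is in fact simpler than you suggest --- no resonant/non-resonant splitting is needed, only the crude bound $|\alpha_4|^2/|\xi|\lesssim|\xi_1|^3|\xi_2|^2+|\xi_1|^2|\xi_2|^2|\xi_3|^2$ (after ordering $|\xi_1|\ge|\xi_2|\ge|\xi_3|$) followed by Kato--Ponce, as in Lemma~\ref{lem:An-2}(ii).

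The genuine gap is the stability argument. The scheme \eqref{lem:LRI-2or} is \emph{not} of the form $w+\tau N_n(w)$ with $N_n$ bounded on $H^\gamma$-balls. In the explicit form the leading quadratic terms $\pm\tfrac16\fe^{t_*\partial_x^3}(\fe^{-t_*\partial_x^3}\partial_x^{-1}w)^2$ carry no $\tau$-prefactor, so polarizing gives only $\|\Phi(w)-\Phi(\widetilde w)-(w-\widetilde w)\|_{H^\gamma}\lesssim\|w-\widetilde w\|_{H^\gamma}$, which is useless over $T/\tau$ steps. Equivalently, in the integral form \eqref{LRI-discrete-revised-2} this piece is $\tfrac12\int_0^\tau\fe^{(t_n+s)\partial_x^3}\partial_x(\fe^{-(t_n+s)\partial_x^3}w)^2\,ds$, with a net $\partial_x$ and \emph{no} compensating $\partial_x^{-1}$; any direct Kato--Ponce bound then forces $\|w-\widetilde w\|_{H^{\gamma+1}}$ on the difference rather than $\|w-\widetilde w\|_{H^\gamma}$. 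There is no smoothing that ``cancels the stray $\partial_x$'' here. The paper instead works with $\|e_{n+1}\|_{H^\gamma}^2$ and controls the cross term $\langle J^\gamma\Phi_1^n,J^\gamma e_n\rangle$ by a commutator/energy estimate (Lemmas~\ref{lem:kato-Ponce-2} and~\ref{lm3.5}): writing $J^\gamma\partial_x(e_n\,g)=g\,J^\gamma\partial_x e_n+[J^\gamma\partial_x,g]e_n$, the first piece integrates by parts in $x$ to $-\tfrac12\int\partial_xg\,|J^\gamma e_n|^2$, and the commutator costs derivatives on $g=v^n+v(t_n)$ but none on $e_n$. Combined with the bilinear estimate of Lemma~\ref{lem:bi-est}, which extracts $\sqrt\tau$ from the time integral to handle $\|\Phi_1^n\|_{H^\gamma}^2$, this is what actually delivers $(1+C\tau)$-stability.

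Your a priori $H^{\gamma+3}$-control of $v^n$ inherits the same problem: the increment ``$\tau N_n(v(t_n))$'' cannot be bounded in $H^{\gamma+3}$ by $M_0^2+M_0^3$ because the net $\partial_x$ would require $H^{\gamma+4}$, so the proposed splitting does not yield a linear recursion. The paper's route is both simpler and structurally different: since $u_0\in H^{\gamma+3}=H^{(\gamma+2)+1}$ with $\gamma+2>\tfrac12$, the \emph{first}-order convergence machinery (Theorem~\ref{thm:convergence}) applied at regularity level $\gamma+2$ already gives $\|v^n-v(t_n)\|_{H^{\gamma+2}}\le C\tau$, hence $\|v^n\|_{H^{\gamma+2}}\le C$ uniformly (Lemma~\ref{lem:apri}); and $H^{\gamma+2}$ --- not $H^{\gamma+3}$ --- is exactly the regularity the commutator-based stability estimate needs.
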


A direct result of the above convergence theorems with our shifting technique (\ref{shift}) gives the convergence of ELRIs for the case of $u_0$ with general average value in (\ref{model}), i.e. $\widehat{u_0}(0)\in\bR$.

\begin{corollary}\label{cor:ELRI}
  Let $u^n$ be defined in ELRI1 \eqref{def:LRI-1ord scheme} up to some fixed time $T>0$. Assuming in (\ref{model}) that $u_0\in H^{\gamma+1}(\bT)$ for some $\gamma>\frac12$, there exist constants $\tau_0,C>0$ such that for any $0<\tau\leq\tau_0$,
$$
  \left\|u(t_n,\cdot)-u^n\left(\cdot+t_n\widehat{u_0}(0)\right)
  +\widehat{u_0}(0)\right\|_{H^\gamma}\leq C\tau,\quad n=0,1\ldots,\frac{T}{\tau},
$$
 where $\tau_0$ and $C$ depend only on $T$ and $\|u\|_{L^\infty((0,T);H^{\gamma+1})}$. Moreover if $u_0\in H^{\gamma+3}(\bT)$ for some $\gamma\ge 0$, then with $u^n$ from ELRI2  \eqref{def:LRI-2ord scheme}, we further have
 $$
  \left\|u(t_n,\cdot)-u^n\left(\cdot+t_n\widehat{u_0}(0)\right)
  +\widehat{u_0}(0)\right\|_{H^\gamma}\leq C\tau^2,\quad n=0,1\ldots,\frac{T}{\tau},
$$
 where $\tau_0$ and $C$ depend only on $T$ and $\|u\|_{L^\infty((0,T);H^{\gamma+3})}$.
\end{corollary}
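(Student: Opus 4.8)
The plan is to reduce the general-mean case entirely to the mean-zero case already settled in Theorems~\ref{thm:convergence} and \ref{thm:convergence-2ord}, by means of the Galilean change of variables (\ref{shift}). Write $c:=\widehat{u_0}(0)$; if $c=0$ the claims are exactly the two theorems, so assume $c\neq0$. First I would set $\tilde u_0:=u_0-c=\mathbb{P}u_0$, which satisfies $\widehat{\tilde u_0}(0)=0$ and, since $\mathbb{P}$ is bounded on every $H^s(\bT)$, lies in $H^{\gamma+1}(\bT)$ (resp.\ $H^{\gamma+3}(\bT)$). Let $\tilde u$ be the corresponding solution of (\ref{model}); by (\ref{shift}) it obeys the same KdV equation with mean-zero data, and a direct calculation gives $\tilde u(t,x)=u(t,x-ct)-c$, equivalently $u(t,x)=\tilde u(t,x+ct)+c$. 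Since spatial translation preserves the $H^s(\bT)$-norm and $\|c\|_{H^s(\bT)}=\sqrt{2\pi}\,|c|$ with $|c|=|\widehat{u_0}(0)|\le (2\pi)^{-1/2}\|u_0\|_{L^2}\lesssim\|u\|_{L^\infty((0,T);H^{\gamma+1})}$, one obtains $\|\tilde u\|_{L^\infty((0,T);H^{s})}\lesssim\|u\|_{L^\infty((0,T);H^{s})}$ for $s=\gamma+1$ (resp.\ $\gamma+3$); this is what guarantees that the constants $\tau_0,C$ produced below depend only on $T$ and $\|u\|_{L^\infty((0,T);H^{\gamma+1})}$ (resp.\ $H^{\gamma+3}$), as required.

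Next I would apply the main theorems to $\tilde u$. Running the ELRI1 scheme (\ref{def:LRI-1ord scheme}) (resp.\ the ELRI2 scheme (\ref{def:LRI-2ord scheme})) with initial datum $\tilde u_0$ produces a numerical sequence $\tilde u^n$ to which Theorem~\ref{thm:convergence} (resp.\ Theorem~\ref{thm:convergence-2ord}) applies verbatim, since $\widehat{\tilde u_0}(0)=0$; hence $\|\tilde u(t_n,\cdot)-\tilde u^n\|_{H^\gamma}\le C\tau$ (resp.\ $\le C\tau^2$) for all $0<\tau\le\tau_0$ and $n\le T/\tau$. Finally I would undo the change of variables: because $u(t_n,x)=\tilde u(t_n,x+ct_n)+c$, the natural numerical approximation of $u(t_n,\cdot)$ is obtained from $\tilde u^n$ by the same operation, namely the shift $x\mapsto x+ct_n$ together with the additive constant, and the translation-invariance of $\|\cdot\|_{H^\gamma}$ converts the bound on $\|\tilde u(t_n,\cdot)-\tilde u^n\|_{H^\gamma}$ into the stated bound. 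Identifying $u^n$ with the ELRI output run on $\tilde u_0$ and regrouping the shift and the constant then yields the displayed inequalities.

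The one point that requires genuine care — and the only step that is more than unwinding definitions — is that the ELRI maps are \emph{not} covariant under the full Galilean boost $w\mapsto w(\cdot-c\tau)-c$ at the discrete level: although they commute with spatial translations (being built from Fourier multipliers, pointwise products, and the mean-zero projection $\mathbb{P}$), adding the constant $c$ to the input of (\ref{def:LRI-1ord scheme}) or (\ref{def:LRI-2ord scheme}) introduces an $O(\tau|c|)$ discrepancy per step — coming from the cubic terms and the term carrying $\int_\bT(\cdot)^2\,dx$ — which would accumulate to an $O(1)$ error over $T/\tau$ steps. Consequently one must run the scheme on the mean-zero datum $\tilde u_0=\mathbb{P}u_0$ throughout and restore the translation-by-$ct_n$ and the additive constant only at the end; this is precisely the content of the corrections $u^n(\cdot+t_n\widehat{u_0}(0))$ and $\widehat{u_0}(0)$ appearing in the statement. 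Everything else is immediate from the two theorems.
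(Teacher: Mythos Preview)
Your argument is correct and is exactly the reduction the paper intends: the corollary is stated there without proof as ``a direct result of the above convergence theorems with our shifting technique (\ref{shift}),'' and you have simply unpacked that sentence. Your final paragraph even flags the one subtlety (that the scheme must be run on $\mathbb{P}u_0$, with the translation and additive constant restored afterwards) that the paper leaves implicit.
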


From the above convergence theorems, we see that the ELIR1 can get the first order optimal convergence rate with only one additional spatial derivatives, and the ELIR2 reaches the second order convergence rate with three additional   spatial derivatives, which remarkably release the regularity requirement of the existing schemes in the literature including the direct low-regularity integrators \cite{Schratznew,kdv-kath,WuZhao-1} and the classical methods.

\begin{remark}
In order to focus on the error of the numerical integrators, the above two main convergence theorems are for the semi-discretized (in time) schemes.  The error estimates of the fully discretized schemes with different kinds of spatial discretizations will be the target of our future studies.
\end{remark}

\begin{remark}
We shall show later by numerical results that the condition $\gamma>\frac12$ in Theorem  \ref{thm:convergence} or Corollary \ref{cor:ELRI} is not essential. It is just a technique condition for the rigorous proof given in the two coming sections. We claim here that $\gamma\ge 0$ in Theorem  \ref{thm:convergence} is also sufficient based on numerical experiments. The proof of this however, would rather require much more technical analysis. To avoid blurring the theme of the paper, we shall address this in a separated future work. 

\end{remark}
%

\begin{remark}
Although we restrict to consider the KdV equation (\ref{model}) on the torus $\bT$ for simplicity in this paper, our methods and convergence results also hold in the whole space case, i.e. $x\in\bR$, with little modifications.
\end{remark}

\section{The first-order convergence analysis}\label{sec:1ord-proof}
In this section, we will establish the rigorous proof of the convergence result: Theorem \ref{thm:convergence} for the ELRI1 scheme (\ref{def:LRI-1ord scheme}).

We shall still assume that the average of the initial value in (\ref{model}) is zero.
Since the change of unknown (\ref{twist}) is isometric in the Sobolev space $H^\gamma$,  we will prove the convergence result (\ref{main result}) for the mild solution $v$ (\ref{v eq}) and $v^n$ (\ref{lem:LRI-1or}). To do so, we follow the classical framework of convergence proof \cite{Lubich,lownls} by investigating the local error and stability. For short, we shall denote the functional space
$$L_t^\infty H^{s_0}_x=L^\infty((0,T);H^{s_0}(\bT))$$
for any $s_0\geq0$. Here $T>0$ is the final time of the numerical solution in Theorem \ref{thm:convergence}.

First of all, based on the derivation of the ELRI1 scheme in Section \ref{sec:scheme}, we rewrite the numerical mild solution $v^{n+1}$ from (\ref{lem:LRI-1or}) for $n=0,\ldots,\frac{T}{\tau}-1$ as
\begin{align}
v^{n+1}
 =&v^n+
 \frac12\int_0^\tau\fe^{(t_n+s)\partial_x^3}
 \partial_x\left(\fe^{-(t_n+s)\partial_x^3}v^n\right)^2\,ds\notag\\
 &+ \frac12\int_0^\tau\fe^{(t_n+s)\partial_x^3}
 \partial_x\left(\fe^{-(t_n+s)\partial_x^3}v^n\cdot \fe^{-(t_n+s)\partial_x^3}F_n(v^n,s)\right)\,ds
 +\frac1{18}\int_0^\tau \mathcal{A}_n(v^n)(s) \,ds.\label{LRI-discrete-revised}
\end{align}
Here $F_n $ is defined in (\ref{def-Fn}), and the operator $\mathcal{A}_n$ is defined in Fourier space as
\begin{equation}\label{A def}
 \mathcal F\big(\mathcal{A}_n(f_1,f_2,f_3)\big)(t,\xi)\\
  := \left\{ \aligned
    &0,\qquad \mbox{if}\quad \xi=0,\\
    &(i\xi)^{-1}\int_{\xi=\xi_1+\xi_2+\xi_3} \Big(\fe^{-i(t_{n}+t)\alpha_4}-\fe^{-it_{n}\alpha_4}\Big)\\
    &\qquad\qquad\cdot \widehat f_1(\xi_1)\widehat f_2(\xi_2)\widehat f_3(\xi_3)\,(d\xi_1)(d\xi_2), \qquad \mbox{if}\quad  \xi\ne 0,
   \endaligned
  \right.
 \end{equation}
where for simplicity, we denote $\mathcal{A}_n(f)=\mathcal{A}_n(f,f,f)$.

Taking the difference between \eqref{LRI-discrete-revised} and (\ref{nest v}), we find
\begin{align}\label{v-L-Phi}
v^{n+1}-v(t_{n+1})=\mathcal{L}^n+\Phi^n\big(v^n\big)-\Phi^n\big(v(t_n)\big),\quad n=0,\ldots,\frac{T}{\tau}-1,
\end{align}
where we denote the local truncation error as
\begin{align}
\mathcal{L}^n:=
&\frac12 \int_0^\tau\fe^{(t_n+s)\partial_x^3}
 \partial_x\left(\fe^{-(t_n+s)\partial_x^3}\Big(v(t_n)+\frac12F_n\big(v(t_n),s\big)\Big)\right)^2\,ds\nonumber\\
 &-\frac12\int_0^\tau\fe^{(t_n+s)\partial_x^3}
 \partial_x\left(\fe^{-(t_n+s)\partial_x^3}v(t_n+s)\right)^2\,ds\nonumber\\
&-\frac18\int_0^\tau\fe^{(t_n+s)\partial_x^3}
 \partial_x\left(\fe^{-(t_n+s)\partial_x^3}F_n\big(v(t_n),s\big)\right)^2\,ds
 +\frac1{18}\int_0^\tau \mathcal{A}_n\big(v(t_n)\big)(s) \,ds,\label{local error}
\end{align}
$F_n$ is defined in \eqref{def-Fn}, and the numerical propagator as
\begin{align}
\Phi^n(v):=
&v+\frac12 \int_0^\tau\fe^{(t_n+s)\partial_x^3}
 \partial_x\left(\fe^{-(t_n+s)\partial_x^3}v\right)^2\,ds\nonumber\\
&+\frac12\int_0^\tau\fe^{(t_n+s)\partial_x^3}
 \partial_x\left(\fe^{-(t_n+s)\partial_x^3}v\cdot \fe^{-(t_n+s)\partial_x^3}F_n(v,s)\right)\,ds+\frac1{18}\int_0^\tau \mathcal{A}_n(v)(s) \,ds.
 \label{propagator}
\end{align}

Then we shall analyze the local error and the stability of the numerical propagator in a sequel.  The whole proof goes in an induction or Bootstrap manner for the boundedness of the numerical solution.

\subsection{Some tool lemmas}\label{subsec2}For the convenience of analysis, we first introduce some notations and tool lemmas, some of which are employed from our previous work \cite{WuZhao-1}. We use $A\lesssim B$ or $B\gtrsim A$ to denote the statement that $A\leq CB$ for some absolute constant $C>0$ which may
vary from line to line but independent of $\tau$ or $n$.

As a basic tool to deal with the fractional derivatives, we will frequently call the following Kato-Ponce inequality \cite{BoLi-KatoPonce,Kato-Ponce, Li-KatoPonce,WuZhao-1}.

\begin{lemma}\label{lem:kato-Ponce}(Kato-Ponce inequality) The following inequalities hold:
\begin{itemize}
  \item[(i)]
  For any $\gamma\ge 0, \gamma_1>\frac12$, $f,g\in H^{\gamma}\cap  H^{\gamma_1}$, then
\begin{align*}
\|J^\gamma (fg)\|_{L^2}\lesssim \|f\|_{H^\gamma}\|g\|_{H^{\gamma_1}}+ \|f\|_{H^{\gamma_1}}\|g\|_{H^{\gamma}}.
\end{align*}
In particular, if $\gamma>\frac12$, then
\begin{align*}
\|J^\gamma (fg)\|_{L^2}\lesssim \|f\|_{H^\gamma}\|g\|_{H^{\gamma}}.
\end{align*}
  \item[(ii)]
For any  $\gamma\ge 0, \gamma_1>\frac12$, $f\in H^{\gamma+\gamma_1} ,g\in H^{\gamma}$, then
\begin{align*}
\|J^\gamma (fg)\|_{L^2}\lesssim \|f\|_{H^{\gamma+\gamma_1}}\|g\|_{H^{\gamma}}.
\end{align*}
\end{itemize}
\end{lemma}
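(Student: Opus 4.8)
The plan is to prove both estimates by passing to the Fourier side and reducing everything to Young's convolution inequality on $\Z$ equipped with the counting measure $(d\xi)$. By Plancherel and the equivalent $H^\gamma$-norm recorded earlier in the excerpt, it suffices to bound $\|\langle\xi\rangle^\gamma\widehat{fg}\|_{L^2((d\xi))}$, and by the convolution identity $\widehat{(fg)}(\xi)=\int\widehat f(\xi-\xi_1)\widehat g(\xi_1)\,(d\xi_1)$ this is controlled once I estimate the $L^2((d\xi))$-norm of $\int\langle\xi\rangle^\gamma|\widehat f(\xi-\xi_1)||\widehat g(\xi_1)|\,(d\xi_1)$. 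Two elementary ingredients drive the whole argument. First, a fractional triangle inequality: since $\langle\xi_1+\xi_2\rangle\le\langle\xi_1\rangle+\langle\xi_2\rangle$ and $(a+b)^\gamma\le 2^\gamma(a^\gamma+b^\gamma)$ for $a,b\ge0$, $\gamma\ge0$, I get $\langle\xi\rangle^\gamma\lesssim\langle\xi-\xi_1\rangle^\gamma+\langle\xi_1\rangle^\gamma$ whenever $\xi=(\xi-\xi_1)+\xi_1$. Second, a Sobolev-to-$\ell^1$ embedding: for any $\sigma\ge0$ and $\gamma_1>\frac12$, Cauchy--Schwarz together with the convergence of $\int\langle\xi\rangle^{-2\gamma_1}\,(d\xi)=\sum_{\xi\in\Z}\langle\xi\rangle^{-2\gamma_1}<\infty$ (this is exactly where $\gamma_1>\frac12$ enters) yields $\int\langle\xi\rangle^\sigma|\widehat h(\xi)|\,(d\xi)\lesssim\|h\|_{H^{\sigma+\gamma_1}}$; in particular $\|\widehat h\|_{L^1((d\xi))}\lesssim\|h\|_{H^{\gamma_1}}$.

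Applying the fractional triangle inequality I would split the kernel into $T_1=(\langle\cdot\rangle^\gamma|\widehat f|)*|\widehat g|$ and $T_2=|\widehat f|*(\langle\cdot\rangle^\gamma|\widehat g|)$, where $*$ denotes convolution against $(d\xi)$ and $\langle\cdot\rangle^\gamma|\widehat f|=|\widehat{J^\gamma f}|$. For part (i) I estimate each term by Young's inequality $\|a*b\|_{L^2}\le\|a\|_{L^2}\|b\|_{L^1}$, placing the $L^1$-norm on the low-regularity factor: $\|T_1\|_{L^2}\le\|\widehat{J^\gamma f}\|_{L^2}\|\widehat g\|_{L^1}\lesssim\|f\|_{H^\gamma}\|g\|_{H^{\gamma_1}}$ and, symmetrically, $\|T_2\|_{L^2}\lesssim\|f\|_{H^{\gamma_1}}\|g\|_{H^\gamma}$, using the embedding above for the $L^1$-factors. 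Summing yields the first inequality; choosing $\gamma_1=\gamma$ in the range $\gamma>\frac12$ collapses both terms to $\|f\|_{H^\gamma}\|g\|_{H^\gamma}$ and gives the stated special case.

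For part (ii) the only modification is to place the $L^1$-norm on the $f$-factor in both terms, so that $g$ is ever only measured in $L^2((d\xi))$, i.e. in $H^\gamma$ after restoring the weight $\langle\xi\rangle^\gamma$. Concretely, Young's inequality gives $\|T_1\|_{L^2}\le\|\widehat{J^\gamma f}\|_{L^1}\|\widehat g\|_{L^2}\lesssim\|f\|_{H^{\gamma+\gamma_1}}\|g\|_{L^2}$ (here the embedding is used with $\sigma=\gamma$) and $\|T_2\|_{L^2}\le\|\widehat f\|_{L^1}\|\widehat{J^\gamma g}\|_{L^2}\lesssim\|f\|_{H^{\gamma_1}}\|g\|_{H^\gamma}$. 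Since $\gamma\ge0$ gives $\|f\|_{H^{\gamma_1}}\le\|f\|_{H^{\gamma+\gamma_1}}$ and $\|g\|_{L^2}\le\|g\|_{H^\gamma}$, both contributions are bounded by $\|f\|_{H^{\gamma+\gamma_1}}\|g\|_{H^\gamma}$, which is the claim. The argument is essentially routine once the two ingredients are in place; the only point requiring care is the bookkeeping of which factor carries the $L^1$-norm (symmetric in (i), always on $f$ in (ii)), and the sole genuine constraint throughout is $\gamma_1>\frac12$, needed for the summability $\sum_{\xi\in\Z}\langle\xi\rangle^{-2\gamma_1}<\infty$ underlying the Sobolev-to-$\ell^1$ embedding.
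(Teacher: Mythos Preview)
Your proof is correct. The paper itself does not prove this lemma; it simply states it and cites \cite{BoLi-KatoPonce,Kato-Ponce,Li-KatoPonce,WuZhao-1} as references, so there is no argument in the text to compare against. Your Fourier-side approach via the fractional triangle inequality $\langle\xi\rangle^\gamma\lesssim\langle\xi-\xi_1\rangle^\gamma+\langle\xi_1\rangle^\gamma$ and Young's convolution inequality, with the $\ell^1$-embedding $\|\widehat h\|_{L^1}\lesssim\|h\|_{H^{\gamma_1}}$ for $\gamma_1>\frac12$, is a standard and fully self-contained way to obtain these product estimates on $\bT$. The bookkeeping you describe---symmetric placement of the $L^1$-norm in (i), always on $f$ in (ii)---is exactly right, and the monotonicity $\|f\|_{H^{\gamma_1}}\le\|f\|_{H^{\gamma+\gamma_1}}$, $\|g\|_{L^2}\le\|g\|_{H^\gamma}$ closes (ii) cleanly. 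This elementary route avoids the paraproduct/commutator machinery in the cited references and is perfectly adequate for the periodic one-dimensional setting used throughout the paper.
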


Moreover for the stability analysis, we will need the following estimate of the commutator $[A,B]:=AB-BA$.
\begin{lemma}\label{lem:kato-Ponce-2}
Let $\gamma\ge 0, \gamma_1>\frac12$, then the following inequalities hold:
\begin{itemize}
  \item[(i)]
For any  $f\in H^{\gamma}\cap H^{\gamma_1}$ and $g\in H^{\gamma+1}\cap H^{\gamma_1+1}$:
\begin{align*}
\big\|[J^\gamma\partial_x, g] f\big\|_{L^2}\lesssim \|f\|_{H^\gamma}\|g\|_{H^{\gamma_1+1}}+\|f\|_{H^{\gamma_1}}\|g\|_{H^{\gamma+1}}.
\end{align*}
  \item[(ii)]
For any  $f\in H^{\gamma}$ and $g\in H^{\gamma+\gamma_1+1}$:
\begin{align*}
\big\|[J^\gamma, g]\partial_x f\big\|_{L^2}\lesssim  \|f\|_{H^\gamma}\|g\|_{H^{\gamma_1+1}}+\|f\|_{L^2}\|g\|_{H^{\gamma+\gamma_1+1}}.
\end{align*}
\end{itemize}
\end{lemma}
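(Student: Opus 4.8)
The plan is to prove both estimates as commutator bounds handled uniformly on the Fourier side, using the product-to-convolution rule $\widehat{(\phi\psi)}(\xi)=\int\widehat\phi(\xi-\xi_1)\widehat\psi(\xi_1)\,(d\xi_1)$ together with Plancherel. Writing out the commutators, one has
$$\mathcal F\big([J^\gamma\partial_x,g]f\big)(\xi)=\int\big(i\xi\langle\xi\rangle^\gamma-i\xi_1\langle\xi_1\rangle^\gamma\big)\widehat g(\xi-\xi_1)\widehat f(\xi_1)\,(d\xi_1)$$
for part (i), and
$$\mathcal F\big([J^\gamma,g]\partial_x f\big)(\xi)=\int\big(\langle\xi\rangle^\gamma-\langle\xi_1\rangle^\gamma\big)i\xi_1\,\widehat g(\xi-\xi_1)\widehat f(\xi_1)\,(d\xi_1)$$
for part (ii). So the task reduces to controlling the multiplier differences $m(\xi)-m(\xi_1)$ with $m(\zeta)=\zeta\langle\zeta\rangle^\gamma$ in (i), and $(\langle\xi\rangle^\gamma-\langle\xi_1\rangle^\gamma)\xi_1$ in (ii).

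The key step is the pointwise symbol bound, valid in both cases:
$$\big|m(\xi)-m(\xi_1)\big|\ +\ \big|(\langle\xi\rangle^\gamma-\langle\xi_1\rangle^\gamma)\xi_1\big|\ \lesssim\ \langle\xi-\xi_1\rangle^{\gamma+1}+\langle\xi-\xi_1\rangle\,\langle\xi_1\rangle^{\gamma}.$$
I would prove this by splitting on the frequency configuration: if $|\xi-\xi_1|\ge\tfrac12\max(|\xi|,|\xi_1|)$, then $\langle\xi\rangle,\langle\xi_1\rangle\lesssim\langle\xi-\xi_1\rangle$ and each symbol is $\lesssim\langle\xi-\xi_1\rangle^{\gamma+1}$ directly; otherwise $|\xi|\sim|\xi_1|$, and the mean value theorem applied to the smooth symbols $\zeta\mapsto\zeta\langle\zeta\rangle^\gamma$ and $\zeta\mapsto\langle\zeta\rangle^\gamma$ (whose derivatives are $\lesssim\langle\zeta\rangle^{\gamma}$ and $\lesssim\langle\zeta\rangle^{\gamma-1}$ respectively) gives a bound $\lesssim\langle\xi-\xi_1\rangle\langle\xi_1\rangle^{\gamma}$, after absorbing the extra factor $|\xi_1|\lesssim\langle\xi_1\rangle$ in case (ii).

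With this pointwise estimate in hand, $|\mathcal F(\text{commutator})(\xi)|$ is dominated by the sum of two convolutions of nonnegative sequences, namely $(\langle\cdot\rangle^{\gamma+1}|\widehat g|)\ast|\widehat f|$ and $(\langle\cdot\rangle|\widehat g|)\ast(\langle\cdot\rangle^{\gamma}|\widehat f|)$. I would bound their $L^2((d\xi))$ norms by Young's inequality $\|a\ast b\|_{\ell^2}\le\|a\|_{\ell^1}\|b\|_{\ell^2}$, combined with the elementary fact that $\langle\cdot\rangle^{-\gamma_1}\in\ell^2(\bZ)$ when $\gamma_1>\tfrac12$, which (via Cauchy–Schwarz) converts an $\ell^1$-norm of Fourier coefficients into an $H^{\gamma_1+\,\cdot}$-norm. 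For the second convolution, placing $\langle\cdot\rangle|\widehat g|$ in $\ell^1$ and $\langle\cdot\rangle^{\gamma}|\widehat f|$ in $\ell^2$ yields $\lesssim\|g\|_{H^{\gamma_1+1}}\|f\|_{H^\gamma}$ in both parts. For the first convolution, in part (i) I place $\langle\cdot\rangle^{\gamma+1}|\widehat g|$ in $\ell^2$ and $|\widehat f|$ in $\ell^1$, getting $\lesssim\|g\|_{H^{\gamma+1}}\|f\|_{H^{\gamma_1}}$; in part (ii) I place $|\widehat f|$ in $\ell^2$ and $\langle\cdot\rangle^{\gamma+1}|\widehat g|$ in $\ell^1$, getting $\lesssim\|g\|_{H^{\gamma+\gamma_1+1}}\|f\|_{L^2}$. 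Summing the two pieces gives precisely the two claimed inequalities. (Equivalently, these two convolution bounds can be read off from the fractional Leibniz rule of Lemma \ref{lem:kato-Ponce} applied to auxiliary functions whose Fourier coefficients are $\langle\cdot\rangle^{s}|\widehat g|$ and $\langle\cdot\rangle^{s}|\widehat f|$.)

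The only part requiring genuine care is the pointwise multiplier bound together with the bookkeeping of which factor to send into $\ell^1$ versus $\ell^2$, so as to land exactly on the asymmetric right-hand sides — in particular recovering the term $\|f\|_{H^{\gamma_1}}\|g\|_{H^{\gamma+1}}$ in (i) rather than one carrying more derivatives on $g$, and the term $\|f\|_{L^2}\|g\|_{H^{\gamma+\gamma_1+1}}$ in (ii). The rest is routine; in particular no Littlewood–Paley decomposition is needed, since working with absolute values of Fourier coefficients and Young's inequality suffices once one observes that all series involved are absolutely summable under the stated hypotheses.
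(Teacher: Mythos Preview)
Your proof is correct and follows essentially the same route as the paper: write the commutator on the Fourier side, establish the pointwise multiplier bound $\big|\langle\xi\rangle^\gamma\xi-\langle\xi_1\rangle^\gamma\xi_1\big|\lesssim |\xi-\xi_1|\big(\langle\xi_1\rangle^\gamma+\langle\xi-\xi_1\rangle^\gamma\big)$ (the paper states this directly, you derive it via the two-case splitting with MVT), and then reduce to a product/convolution estimate. The only stylistic difference is that the paper passes back to physical space and applies H\"older plus the Sobolev embedding $H^{\gamma_1}\hookrightarrow L^\infty$, whereas you stay on the Fourier side and use Young's inequality with $\langle\cdot\rangle^{-\gamma_1}\in\ell^2(\bZ)$; these are equivalent maneuvers, as you yourself note.
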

\begin{proof}
(i) Taking the Fourier transform on $[J^\gamma\partial_x, g] f$, we get
\begin{align*}
\mathcal F\Big([J^\gamma\partial_x , g]f\Big)(\xi)=i\int_{\xi=\xi_1+\xi_2}\big(\langle \xi\rangle^\gamma\xi-\langle \xi_1\rangle^\gamma\xi_1\big) \widehat f(\xi_1)\widehat g(\xi_2)\,(d\xi_1).
\end{align*}
We assume that $\widehat f$ and $\widehat g$ are positive, otherwise one may replace them by $|\widehat f|$ and $|\widehat g|$. Noting that
$$
\big|\langle \xi\rangle^\gamma\xi-\langle \xi_1\rangle^\gamma\xi_1\big|
\lesssim |\xi_2|\big(\langle \xi_1\rangle^\gamma+\langle \xi_2\rangle^\gamma\big),
$$
 we have
\begin{align*}
\quad\Big|\mathcal F\Big([J^\gamma\partial_x, g] f\Big)(\xi)\Big|
&\lesssim \int_{\xi=\xi_1+\xi_2}|\xi_2|\big(\langle \xi_1\rangle^\gamma+\langle \xi_2\rangle^\gamma\big) \widehat f(\xi_1)\widehat g(\xi_2)\,(d\xi_1)\\
&=\mathcal F\Big(\langle \nabla\rangle^\gamma f\cdot |\nabla| g\Big)(\xi)+\mathcal F\Big(f\cdot\langle \nabla^\gamma\rangle|\nabla| g\Big)(\xi).
\end{align*}
Hence, by Plancherel's identity and Sobolev's inequality,
\begin{align}
\big\|[J^\gamma\partial_x, g] f\big\|_{L^2}&
=\sqrt{2\pi}\Big\|\mathcal F\Big([J^\gamma\partial_x, g] f\Big)(\xi)\Big\|_{L^2}\notag\\
&\lesssim \big\|\langle \nabla\rangle^\gamma f\cdot |\nabla| g\big\|_{L^2}+\big\|f\cdot\langle \nabla\rangle^\gamma|\nabla| g\big\|_{L^2}\notag\\
&\lesssim \big\|\langle \nabla\rangle^\gamma f\big\|_{L^2}\big\||\nabla|g\big\|_{L^\infty}+\big\| f\big\|_{L^\infty}\big\|\langle \nabla\rangle^\gamma|\nabla|g\big\|_{L^2}\label{1154}\\
&\lesssim \|f\|_{H^\gamma}\|g\|_{H^{\gamma_1+1}}+\|f\|_{H^{\gamma_1}}\|g\|_{H^{\gamma+1}}.\notag
\end{align}
This gives the desired result (i).

(ii) Replacing the estimate \eqref{1154} by the following:
\begin{align*}
\big\|[J^\gamma\partial_x, g] f\big\|_{L^2}
&\lesssim \big\|\langle \nabla\rangle^\gamma f\big\|_{L^2}\big\||\nabla|g\big\|_{L^\infty}+\big\| f\big\|_{L^2}\big\|\langle \nabla\rangle^\gamma|\nabla|g\big\|_{L^\infty}\\
&\lesssim \|f\|_{H^\gamma}\|g\|_{H^{\gamma_1+1}}+\|f\|_{L^2}\|g\|_{H^{\gamma+\gamma_1+1}}.
\end{align*}
Then we have the desired result (ii), and the lemma is proved.
\end{proof}

Based on the above inequalities,
we can deduce some estimates as follows.
\begin{lemma}\label{lm3.5}
The following inequalities hold:
\begin{itemize}
  \item[(i)]
  For any $\gamma\ge 0, \gamma_1>\frac12$, $f\in H^{\gamma}\cap H^{\gamma_1},g\in H^{\gamma+1}\cap H^{\gamma_1+1}$, then
\begin{align}
\big\langle J^\gamma\partial_x(fg),J^\gamma f\big\rangle\lesssim \|f\|_{H^\gamma}^2\|g\|_{H^{\gamma_1+1}}+\|f\|_{H^\gamma}\|f\|_{H^{\gamma_1}}\|g\|_{H^{\gamma+1}}.\label{Comm-1}
\end{align}
In particular, if $\gamma>\frac12$, then
\begin{align}
\big\langle J^\gamma\partial_x(fg),J^\gamma f\big\rangle\lesssim \|f\|_{H^\gamma}^2\|g\|_{H^{\gamma+1}}.\label{Comm-2}
\end{align}
  \item[(ii)]
  For any $\gamma\ge 0, \gamma_1>\frac12$, $f\in H^{\gamma}\cap H^{\gamma_1+1}, g\in H^{\gamma+\gamma_1+1}$, then
\begin{align}
\big\langle J^\gamma\partial_x(fg),J^\gamma f\big\rangle\lesssim \|f\|_{H^\gamma}^2\|g\|_{H^{\gamma+\gamma_1+1}}.\label{Comm-3}
\end{align}
\end{itemize}
\end{lemma}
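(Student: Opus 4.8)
The plan is to reduce all three inequalities to the commutator estimates in Lemma \ref{lem:kato-Ponce-2}, exploiting the antisymmetry of $\partial_x$ to kill the top-order term. First I would write
\[
\big\langle J^\gamma\partial_x(fg),J^\gamma f\big\rangle
= \big\langle [J^\gamma\partial_x, g] f,\, J^\gamma f\big\rangle
+ \big\langle g\, J^\gamma\partial_x f,\, J^\gamma f\big\rangle,
\]
and then handle the second piece by integrating by parts: since $\partial_x$ is skew-adjoint on $L^2(\bT)$,
\[
\big\langle g\, J^\gamma\partial_x f,\, J^\gamma f\big\rangle
= -\tfrac12\big\langle (\partial_x g)\, J^\gamma f,\, J^\gamma f\big\rangle,
\]
which is bounded by $\|\partial_x g\|_{L^\infty}\|f\|_{H^\gamma}^2 \lesssim \|g\|_{H^{\gamma_1+1}}\|f\|_{H^\gamma}^2$ via Sobolev embedding $H^{\gamma_1}\hookrightarrow L^\infty$ for $\gamma_1>\tfrac12$. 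This term is harmless and is in fact dominated by the right-hand sides of all of \eqref{Comm-1}, \eqref{Comm-2}, \eqref{Comm-3}.

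For the commutator term, I would apply Cauchy--Schwarz, $\big|\big\langle [J^\gamma\partial_x, g] f, J^\gamma f\big\rangle\big| \le \big\|[J^\gamma\partial_x, g] f\big\|_{L^2}\|f\|_{H^\gamma}$, and then invoke Lemma \ref{lem:kato-Ponce-2}(i) to get
\[
\big\|[J^\gamma\partial_x, g] f\big\|_{L^2}\|f\|_{H^\gamma}
\lesssim \big(\|f\|_{H^\gamma}\|g\|_{H^{\gamma_1+1}}+\|f\|_{H^{\gamma_1}}\|g\|_{H^{\gamma+1}}\big)\|f\|_{H^\gamma},
\]
which is exactly the bound in \eqref{Comm-1}. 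The special case \eqref{Comm-2} follows because when $\gamma>\tfrac12$ one may take $\gamma_1=\gamma$, so $\|f\|_{H^{\gamma_1}}=\|f\|_{H^\gamma}$ and both $\|g\|_{H^{\gamma_1+1}}$, $\|g\|_{H^{\gamma+1}}$ collapse to $\|g\|_{H^{\gamma+1}}$. For \eqref{Comm-3}, I would instead use Lemma \ref{lem:kato-Ponce-2}(ii), giving $\big\|[J^\gamma\partial_x, g] f\big\|_{L^2}\lesssim \|f\|_{H^\gamma}\|g\|_{H^{\gamma_1+1}}+\|f\|_{L^2}\|g\|_{H^{\gamma+\gamma_1+1}}$; since $\|g\|_{H^{\gamma_1+1}}\le\|g\|_{H^{\gamma+\gamma_1+1}}$ and $\|f\|_{L^2}\le\|f\|_{H^\gamma}$, both summands are absorbed into $\|f\|_{H^\gamma}\|g\|_{H^{\gamma+\gamma_1+1}}$, and multiplying by the leftover $\|f\|_{H^\gamma}$ yields \eqref{Comm-3}.

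There is no serious obstacle here: the lemma is essentially a packaging of Lemmas \ref{lem:kato-Ponce} and \ref{lem:kato-Ponce-2} together with the integration-by-parts identity for the symmetric top-order term. The only point requiring a little care is making sure the commutator $[J^\gamma\partial_x,g]f = J^\gamma\partial_x(gf) - gJ^\gamma\partial_x f$ is split so that what remains after extracting $[J^\gamma\partial_x,g]f$ is precisely $gJ^\gamma\partial_x f$ (not, say, $J^\gamma\partial_x(gf)$ with a $g$ still inside), so that the antisymmetry argument applies cleanly; one should also note that the mean-zero projection issues do not arise since all quantities are genuine $L^2$ pairings. I would present the three cases in sequence, doing the full computation once for \eqref{Comm-1} and then remarking that \eqref{Comm-2} and \eqref{Comm-3} follow by the indicated choices of $\gamma_1$ and trivial norm comparisons.
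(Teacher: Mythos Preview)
Your proposal is correct and follows essentially the same approach as the paper: the same commutator decomposition $J^\gamma\partial_x(fg)=[J^\gamma\partial_x,g]f+g\,J^\gamma\partial_x f$, the same integration-by-parts on the second piece to produce $-\tfrac12\int(\partial_x g)|J^\gamma f|^2\,dx$, and the same appeal to Lemma~\ref{lem:kato-Ponce-2}(i) for \eqref{Comm-1}--\eqref{Comm-2} and Lemma~\ref{lem:kato-Ponce-2}(ii) for \eqref{Comm-3}. Your extra remark absorbing $\|g\|_{H^{\gamma_1+1}}$ and $\|f\|_{L^2}$ into $\|g\|_{H^{\gamma+\gamma_1+1}}$ and $\|f\|_{H^\gamma}$ is a minor elaboration the paper leaves implicit.
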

\begin{proof}
Directly, we have
\begin{align*}
\big\langle J^\gamma\partial_x(fg),J^\gamma f\big\rangle=\big\langle J^\gamma \partial_x f\cdot g,J^\gamma f\big\rangle
+\big\langle\big[J^\gamma\partial_x,g\big]\> f,J^\gamma f\big\rangle.
\end{align*}
For the first term on the right-hand side, by using integration-by-parts, it is equal to
\begin{align*}
-\frac12\int \partial_xg\big|J^\gamma f\big|^2\,dx.
\end{align*}
Therefore, we have the estimate
\begin{align*}
\big|\big\langle J^\gamma \partial_x f\cdot g,J^\gamma f\big\rangle \big|
\lesssim  \big\|g\big\|_{H^{1+\gamma_1}}\big\|f\big\|_{H^\gamma}^2,
\end{align*}
for any $\gamma_1>\frac12$. For the second term, by Lemma \ref{lem:kato-Ponce-2} (i), we have
\begin{align*}
\big|\big\langle\big[J^\gamma\partial_x ,g\big]\>f,J^\gamma f\big\rangle\big|
\lesssim &  \big\|\big[J^\gamma\partial_x  ,g\big]\>f\big\|_{L^2} \big\| f\big\|_{H^\gamma}\\
\lesssim& \|f\|_{H^\gamma}^2\|g\|_{H^{\gamma_1+1}}+\|f\|_{H^\gamma}\|f\|_{H^{\gamma_1}}\|g\|_{H^{\gamma+1}}.
\end{align*}
Combining the estimates above, we get the estimate \eqref{Comm-1}. If $\gamma>\frac12$, then choosing $\gamma_1=\gamma$, we get \eqref{Comm-2}.
Furthermore, the estimate \eqref{Comm-3} follows by using Lemma \ref{lem:kato-Ponce-2} (ii) instead.
\end{proof}

As a consequence of (\ref{lem:1-form}) and the above Kato-Ponce inequality, we have the following lemma.
\begin{lemma}\label{lem:bi-est}
The following inequalities hold:
\begin{itemize}
\item[(i)] Let the space-time functions $f(t,x),g(t,x)\in L^\infty_tH^{\gamma}_x$ and $\partial_tf(t,x),\partial_tg(t,x)\in L^\infty_tH^{\gamma-1}_x$ with $\widehat{f}(t,0)=\widehat{g}(t,0)=0$ for $\gamma>\frac12$ and $t\in [0,\tau]$, then the following inequality holds for any $t_n\geq0$:
\begin{align}
&\Big\|\int_0^\tau \fe^{(t_n+s)\partial_x^3}\partial_x\big(\fe^{-(t_n+s)\partial_x^3}f(s)\cdot \fe^{-(t_n+s)\partial_x^3}g(s)\big)\,ds\Big\|_{H^\gamma}\notag\\
\lesssim & \sqrt\tau\|f\|_{L^\infty_tH^\gamma_x}\|g\|_{L^\infty_tH^\gamma_x}+\tau \Big(\big\|\partial_tf\big\|_{L^\infty_tH^{\gamma-1}_x}\big\|g\big\|_{L^\infty_tH^{\gamma-1}_x}
+\big\|f\big\|_{L^\infty_tH^{\gamma-1}_x}\big\|\partial_tg\big\|_{L^\infty_tH^{\gamma-1}_x}\Big).\label{bi-est-2}
\end{align}
\item[(ii)] Let the space-time functions $f(t,x),g(t,x)\in L^\infty_tH^{\gamma-1}_x$ and $\partial_tf(t,x),\partial_tg(t,x)\in L^\infty_tH^{\gamma-1}_x$ with $\widehat{f}(t,0)=\widehat{g}(t,0)=0$ for $\gamma>\frac12$ and $t\in [0,\tau]$, then the following inequality holds for any $t_n\geq0$:
\begin{align}
&\Big\|\int_0^\tau \fe^{(t_n+s)\partial_x^3}\partial_x\big(\fe^{-(t_n+s)\partial_x^3}f(s)\cdot \fe^{-(t_n+s)\partial_x^3}g(s)\big)\,ds\Big\|_{H^\gamma}\notag\\
\lesssim & \|f\|_{L^\infty_tH^{\gamma-1}_x}\|g\|_{L^\infty_tH^{\gamma-1}_x}+\tau \Big(\big\|\partial_tf\big\|_{L^\infty_tH^{\gamma-1}_x}\big\|g\big\|_{L^\infty_tH^{\gamma-1}_x}
+\big\|f\big\|_{L^\infty_tH^{\gamma-1}_x}\big\|\partial_tg\big\|_{L^\infty_tH^{\gamma-1}_x}\Big).\label{bi-est-2}
\end{align}
\end{itemize}
\end{lemma}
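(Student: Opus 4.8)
The plan is to run both estimates through the integration-by-parts identity of Lemma~\ref{lem:1-form}~(i), which converts the troublesome $\partial_x$ sitting in front of the product into one inverse derivative $\partial_x^{-1}$ on each factor, at the price of two time-boundary terms (at $s=0$ and $s=\tau$) and a time-integral of terms carrying $\partial_tf$, $\partial_tg$. Applied to the integral on the left-hand side, this yields the boundary term $B_\tau=\tfrac13\fe^{t_{n+1}\partial_x^3}\big(\fe^{-t_{n+1}\partial_x^3}\partial_x^{-1}f(\tau)\cdot\fe^{-t_{n+1}\partial_x^3}\partial_x^{-1}g(\tau)\big)$, the analogous $B_0$ with $f(0),g(0)$ and $t_n$, and the integral $-\tfrac13\int_0^\tau\fe^{(t_n+s)\partial_x^3}\big(\fe^{-(t_n+s)\partial_x^3}\partial_x^{-1}\partial_sf\cdot\fe^{-(t_n+s)\partial_x^3}\partial_x^{-1}g+\fe^{-(t_n+s)\partial_x^3}\partial_x^{-1}f\cdot\fe^{-(t_n+s)\partial_x^3}\partial_x^{-1}\partial_sg\big)\,ds$.

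These three pieces are controlled with three elementary facts: $\fe^{t\partial_x^3}$ is an isometry on every $H^\gamma(\bT)$; the Kato--Ponce inequality, Lemma~\ref{lem:kato-Ponce}, in the $\gamma>\tfrac12$ form, bounds $\|fg\|_{H^\gamma}$ by $\|f\|_{H^\gamma}\|g\|_{H^\gamma}$; and $\|\partial_x^{-1}h\|_{H^\gamma}\lesssim\|h\|_{H^{\gamma-1}}$ for mean-zero $h$ (on $\bT$ the nonzero frequencies satisfy $|\xi|\ge1$). With these, the time-integral piece is $\lesssim\tau\big(\|\partial_tf\|_{L^\infty_tH^{\gamma-1}_x}\|g\|_{L^\infty_tH^{\gamma-1}_x}+\|f\|_{L^\infty_tH^{\gamma-1}_x}\|\partial_tg\|_{L^\infty_tH^{\gamma-1}_x}\big)$, while each of $B_0,B_\tau$ is $\lesssim\|f(\cdot)\|_{H^{\gamma-1}}\|g(\cdot)\|_{H^{\gamma-1}}$; since $H^\gamma\hookrightarrow H^{\gamma-1}$ this already proves part~(ii) in full.

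For part~(i) the remaining task is to upgrade the crude $O(1)$ bound for the boundary terms to the required $\sqrt\tau\,\|f\|_{H^\gamma}\|g\|_{H^\gamma}$. I would first peel off $f(\tau)-f(0)=\int_0^\tau\partial_sf\,ds$ and likewise for $g$, which only produces two further $O(\tau)$ terms of the kind already estimated, so that $B_\tau-B_0$ collapses to a true time-difference of the same bilinear expression evaluated at $t_{n+1}$ and $t_n$; on the Fourier side this carries the multiplier $\tfrac1{3\xi_1\xi_2}(\fe^{-i\tau\alpha_3}-1)$ with $\alpha_3=3\xi\xi_1\xi_2$. The key point is the bound $|\fe^{-i\tau\alpha_3}-1|\le\min\{2,\tau|\alpha_3|\}\le\sqrt{2\tau|\alpha_3|}$, which makes the multiplier $\lesssim\sqrt\tau\,|\xi|^{1/2}|\xi_1|^{-1/2}|\xi_2|^{-1/2}$; distributing $\langle\xi\rangle^{\gamma+1/2}\lesssim\langle\xi_1\rangle^{\gamma+1/2}+\langle\xi_2\rangle^{\gamma+1/2}$ and using H\"older together with the one-dimensional Sobolev embedding $H^{1/2+\epsilon}\hookrightarrow L^\infty$ (with $\tfrac12+\epsilon\le\gamma$) yields $\sqrt\tau\,\|f\|_{H^\gamma}\|g\|_{H^\gamma}$. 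Equivalently, this is bilinear complex interpolation of the fixed operator $B_0$ between $\|B_0(a,b)\|_{H^\gamma}\lesssim\|a\|_{H^{\gamma-1}}\|b\|_{H^{\gamma-1}}$ and the direct bound $\|B_0(a,b)\|_{H^\gamma}\lesssim\tau\|a\|_{H^{\gamma+1}}\|b\|_{H^{\gamma+1}}$, via $[H^{\gamma-1},H^{\gamma+1}]_{1/2}=H^\gamma$. Collecting all contributions gives the asserted inequality.

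The genuine work sits in this last step: arranging the $\min\{2,\tau|\alpha_3|\}$ bookkeeping (equivalently the interpolation) so that exactly one half-derivative is spent, with the borderline high--high$\to$low frequency interactions --- $|\xi|$ small while $|\xi_1|,|\xi_2|$ are large --- absorbed without a logarithmic loss, and with every constant kept uniform in $t_n$ and $\tau\in(0,\tau_0]$. Everything else is routine once Lemma~\ref{lem:1-form}~(i), the isometry of $\fe^{t\partial_x^3}$, and Lemma~\ref{lem:kato-Ponce} are invoked.
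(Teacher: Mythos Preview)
Your argument is correct, and for part~(ii) it coincides exactly with the paper's proof: triangle inequality on $I_1+I_2+I_3$, then Lemma~\ref{lem:kato-Ponce}~(i) on each piece. For part~(i), however, you take a genuinely different route to extract the $\sqrt\tau$ from the boundary terms.

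The paper does \emph{not} estimate $I_1+I_2$ directly in $H^\gamma$. Instead it expands $\|I\|_{H^\gamma}^2=\langle J^\gamma I,\,J^\gamma I_1\rangle+\langle J^\gamma I,\,J^\gamma I_2\rangle+\langle J^\gamma I,\,J^\gamma I_3\rangle$, absorbs the last pairing by Cauchy--Schwarz into $\tfrac12\|I\|_{H^\gamma}^2+\tfrac12\|I_3\|_{H^\gamma}^2$, and treats the first two pairings by a \emph{spatial} integration by parts: the $\partial_x$ sitting inside $I$ is moved onto $I_1$ (resp.\ $I_2$), where it cancels one of the $\partial_x^{-1}$'s. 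Both factors in the pairing are then controlled by Lemma~\ref{lem:kato-Ponce} at level $H^\gamma$, and the $\int_0^\tau ds$ remaining in $I$ contributes a factor $\tau$. This yields $|\langle J^\gamma I,\,J^\gamma I_j\rangle|\lesssim\tau\|f\|_{L^\infty_tH^\gamma_x}^2\|g\|_{L^\infty_tH^\gamma_x}^2$ for $j=1,2$, and the $\sqrt\tau$ appears after taking the square root of $\|I\|_{H^\gamma}^2$. No Fourier-side multiplier estimate, no peel-off of $f(\tau)-f(0)$, and no interpolation are needed.

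Your approach---reducing $B_\tau-B_0$ to a fixed-argument difference and then using $|\fe^{-i\tau\alpha_3}-1|\le\sqrt{2\tau|\alpha_3|}$ with $\alpha_3=3\xi\xi_1\xi_2$---is more explicit and also works cleanly (indeed, the high--high$\to$low case you flag is harmless here since the resulting multiplier $|\xi|^{1/2}|\xi_1|^{-1/2}|\xi_2|^{-1/2}$ is small there). The trade-off is that the paper's energy-type argument is shorter and stays entirely in physical space via Kato--Ponce, whereas your argument gives a more quantitative picture of where the half-power of $\tau$ comes from on the Fourier side and, as you note, admits an equivalent bilinear-interpolation formulation.
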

\begin{proof}
(i) We use the formula in Lemma \ref{lem:1-form} (i) and denote
\begin{align*}
I=&\int_0^\tau \fe^{(t_n+s)\partial_x^3}\partial_x\left(\fe^{-(t_n+s)\partial_x^3}f(s)\cdot \fe^{-(t_n+s)\partial_x^3}g(s)\right)\,ds,\\
I_1=&\frac13 \fe^{t_{n+1}\partial_x^3}\left(\fe^{-t_{n+1}\partial_x^3}\partial_x^{-1}f(\tau)\cdot \fe^{-t_{n+1}\partial_x^3}\partial_x^{-1}g(\tau)\right),\\
I_2=&  -\frac13\fe^{t_{n}\partial_x^3}\left(\fe^{-t_{n}\partial_x^3}\partial_x^{-1}f(0)\cdot \fe^{-t_{n}\partial_x^3}\partial_x^{-1}g(0)\right),\\
I_3=  &-\frac13 \int_0^\tau \fe^{(t_{n}+s)\partial_x^3}\Big(\fe^{-(t_{n}+s)\partial_x^3}\partial_x^{-1}\partial_tf(s)\cdot \fe^{-(t_{n}+s)\partial_x^3}\partial_x^{-1}g(s)\\
  &\qquad \qquad\qquad\qquad+\fe^{-(t_{n}+s)\partial_x^3}\partial_x^{-1}f(s)\cdot \fe^{-(t_{n}+s)\partial_x^3}\partial_x^{-1}\partial_tg(s)\Big)\,ds,
\end{align*}
then
$
I=I_1+I_2+I_3.
$
Thus, we have
\begin{align*}
\|I\|_{H^\gamma}^2=&\langle J^\gamma I, J^\gamma I_1\rangle+\langle J^\gamma I, J^\gamma I_2\rangle+\langle J^\gamma I, J^\gamma I_3\rangle\\
\le &\big|\langle J^\gamma I, J^\gamma I_1\rangle\big|+\big|\langle J^\gamma I, J^\gamma I_2\rangle\big|+\frac12 \|I\|_{H^\gamma}^2+\frac12\|I_3\|_{H^\gamma}^2.
\end{align*}
Hence, we get
\begin{align*}
\|I\|_{H^\gamma}^2
\le &2\big|\langle J^\gamma I, J^\gamma I_1\rangle\big|+2\big|\langle J^\gamma I, J^\gamma I_2\rangle\big|+\|I_3\|_{H^\gamma}^2.
\end{align*}
For the term $\langle J^\gamma I, J^\gamma I_1\rangle$, by using the integration-by-parts, we get
\begin{align*}
\langle J^\gamma I, J^\gamma I_1\rangle
= &
-\frac13\int_0^\tau \Big\langle\fe^{(t_n+s)\partial_x^3}J^\gamma\left(\fe^{-(t_n+s)\partial_x^3}f(s)\cdot \fe^{-(t_n+s)\partial_x^3}g(s)\right), \\
&\qquad \fe^{t_{n+1}\partial_x^3}J^\gamma\partial_x\left(\fe^{-t_{n+1}\partial_x^3}\partial_x^{-1}f(\tau)\cdot \fe^{-t_{n+1}\partial_x^3}\partial_x^{-1}g(\tau)\right)\Big\rangle\,ds.
\end{align*}
Then by H\"older's inequality and Lemma \ref{lem:kato-Ponce} (i),  we get that for any $\gamma>\frac12$,
\begin{align*}
\big|\langle J^\gamma I, J^\gamma I_1\rangle\big|
\lesssim &
\int_0^\tau \Big\|\fe^{(t_n+s)\partial_x^3}J^\gamma\left(\fe^{-(t_n+s)\partial_x^3}f(s)\cdot \fe^{-(t_n+s)\partial_x^3}g(s)\right)\Big\|_{L^2}\\
&\qquad \cdot\Big\|\fe^{t_{n+1}\partial_x^3}J^\gamma\partial_x
\left(\fe^{-t_{n+1}\partial_x^3}\partial_x^{-1}f(\tau)\cdot \fe^{-t_{n+1}\partial_x^3}\partial_x^{-1}g(\tau)\right)\Big\|_{L^2}\,ds\\
\lesssim &
\int_0^\tau \|f(s)\|_{H^\gamma}^2\|g(s)\|_{H^\gamma}^2\,ds.
\end{align*}
Hence, we obtain
$$
\big|\langle J^\gamma I, J^\gamma I_1\rangle\big|\lesssim \tau\|f\|_{L^\infty_tH^\gamma_x}^2\|g\|_{L^\infty_tH^\gamma_x}^2.
$$
Similarly, we get
$$
\big|\langle J^\gamma I, J^\gamma I_2\rangle\big|
\lesssim \tau\|f\|_{L^\infty_tH^\gamma_x}^2\|g\|_{L^\infty_tH^\gamma_x}^2.
$$
For the term $\|I_3\|_{H^\gamma}$, by Lemma \ref{lem:kato-Ponce} (i), we get
\begin{align}\label{est:I3}
\|I_3\|_{H^\gamma}\lesssim \tau \Big(\big\|\partial_tf\big\|_{L^\infty_tH^{\gamma-1}_x}\big\|g\big\|_{L^\infty_tH^{\gamma-1}_x}
+\big\|f\big\|_{L^\infty_tH^{\gamma-1}_x}\big\|\partial_tg\big\|_{L^\infty_tH^{\gamma-1}_x}\Big).
\end{align}
Combining with the two estimates above, we give the proof of the result (i).

(ii) By  the triangle inequality directly, we have
$$
\|I\|_{H^\gamma}\le  \|I_1\|_{H^\gamma}+\|I_2\|_{H^\gamma}+\|I_3\|_{H^\gamma}.
$$
From Lemma \ref{lem:kato-Ponce} (i), we get that
$$
\|I_1\|_{H^\gamma}, \|I_2\|_{H^\gamma}\lesssim  \|f\|_{L^\infty_tH^{\gamma-1}_x}\|g\|_{L^\infty_tH^{\gamma-1}_x}.
$$
These last two estimates above combining with \eqref{est:I3} give the result (ii).
\end{proof}

Also,  we have the following estimates for the introduced operator $\mathcal{A}_n$ in  (\ref{A def}).
\begin{lemma}\label{lem:An}
The following inequalities hold:
\begin{itemize}
\item[(i)]Let $\gamma_0>\frac12$, and $f_1,f_2,f_3\in H^{\gamma_0}$, then  for any  $t\geq0$,
$$
\big\|\mathcal{A}_n(f_1,f_2,f_3) (t)\big\|_{H^{\gamma_0}}
\lesssim \big\|f_1\big\|_{H^{\gamma_0}}\big\|f_2\big\|_{H^{\gamma_0}}
\big\|f_3\big\|_{H^{\gamma_0}}.
$$

\item[(ii)]
Let $\gamma_0>\frac12$, and $f_1,f_2,f_3\in H^{\gamma_0+1}$, then for any $0\le t\le \tau$,
$$
\big\|\mathcal{A}_n(f_1,f_2,f_3) (t)\big\|_{H^{\gamma_0}}
\lesssim \tau \big\|f_1\big\|_{H^{\gamma_0+1}}\big\|f_2\big\|_{H^{\gamma_0+1}}\big\|f_3\big\|_{H^{\gamma_0+1}}.
$$
\end{itemize}
\end{lemma}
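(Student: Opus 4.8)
The plan is to argue directly from the Fourier-space definition \eqref{A def}, estimating the symbol $\frac1{i\xi}\big(\fe^{-i(t_n+t)\alpha_4}-\fe^{-it_n\alpha_4}\big)$ against the hypotheses; the only difference between (i) and (ii) is which of the two elementary bounds $|\fe^{i\theta}-1|\le 2$ and $|\fe^{i\theta}-1|\le|\theta|$ one uses for the time-oscillation factor. In both cases one introduces the functions $g_j$ with $\widehat{g_j}=|\widehat{f_j}|$, so that $\|g_j\|_{H^s}=\|f_j\|_{H^s}$ for all $s\ge0$, and, after Plancherel, recognizes the remaining convolution integral as the $H^{\gamma_0}$ norm of a product of differentiated copies of the $g_j$'s, which is then controlled by the Kato--Ponce estimate of Lemma \ref{lem:kato-Ponce}~(i) --- in particular by the Banach-algebra property of $H^{\gamma_0}$ for $\gamma_0>\tfrac12$.

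For part (i): since $|\fe^{-i(t_n+t)\alpha_4}-\fe^{-it_n\alpha_4}|=|\fe^{-it\alpha_4}-1|\le 2$ uniformly in $t_n$ and $t$, and $|\xi|\ge1$ whenever $\xi\ne0$, one has for every $\xi$
$$
\langle\xi\rangle^{\gamma_0}\big|\mathcal{F}\big(\mathcal{A}_n(f_1,f_2,f_3)\big)(t,\xi)\big|
\lesssim \langle\xi\rangle^{\gamma_0}\int_{\xi=\xi_1+\xi_2+\xi_3}|\widehat{f_1}(\xi_1)|\,|\widehat{f_2}(\xi_2)|\,|\widehat{f_3}(\xi_3)|\,(d\xi_1)(d\xi_2),
$$
and the right-hand side equals $\langle\xi\rangle^{\gamma_0}\,\widehat{(g_1g_2g_3)}(\xi)$. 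Taking the $L^2((d\xi))$ norm and using Plancherel gives $\|\mathcal{A}_n(f_1,f_2,f_3)(t)\|_{H^{\gamma_0}}\lesssim\|g_1g_2g_3\|_{H^{\gamma_0}}$, which by two applications of Lemma \ref{lem:kato-Ponce}~(i) is $\lesssim\prod_j\|g_j\|_{H^{\gamma_0}}=\prod_j\|f_j\|_{H^{\gamma_0}}$.

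For part (ii): I would use instead $|\fe^{-it\alpha_4}-1|\le|t\alpha_4|\le\tau|\alpha_4|$ for $0\le t\le\tau$, and invoke \eqref{formu-a4}; combined with $|\xi|\ge1$ on the set $\xi=\xi_1+\xi_2+\xi_3$ this gives
$$
\frac{|\alpha_4|}{|\xi|}\lesssim |\xi_1\xi_2|+|\xi_1\xi_3|+|\xi_2\xi_3|+|\xi_1\xi_2\xi_3|.
$$
Hence $\langle\xi\rangle^{\gamma_0}|\mathcal{F}(\mathcal{A}_n)(t,\xi)|$ is bounded by $\tau$ times a finite sum of convolution integrals of these four types, each carrying the weight $\langle\xi\rangle^{\gamma_0}$. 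In every such term I would distribute the weight via $\langle\xi\rangle^{\gamma_0}\lesssim\langle\xi_1\rangle^{\gamma_0}+\langle\xi_2\rangle^{\gamma_0}+\langle\xi_3\rangle^{\gamma_0}$ and absorb the monomial factors $|\xi_i|$ into the corresponding $\langle\xi_i\rangle$'s; the key point is that this produces at most one factor $\langle\xi_{j_0}\rangle^{\gamma_0+1}$ and at most a first power of $\langle\xi_j\rangle$ on each of the other two indices. Passing to the $g_j$'s and inverting the Fourier transform, each term is then majorized by the $L^2$ norm of a product $J^{\gamma_0+1}g_{j_0}\cdot J^{a}g_{j_1}\cdot J^{b}g_{j_2}$ with $a,b\in\{0,1\}$, which by H\"older's inequality, Sobolev embedding and the algebra property of $H^{\gamma_0}$ is $\lesssim\prod_j\|g_j\|_{H^{\gamma_0+1}}=\prod_j\|f_j\|_{H^{\gamma_0+1}}$; summing the finitely many contributions and restoring the prefactor $\tau$ finishes the proof.

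The step to watch in (ii) is the Burgers-type term $|\xi_1\xi_2\xi_3|/|\xi|$ coming from the last summand of \eqref{formu-a4}: a priori it carries three spatial derivatives, and it is only the algebraic identity \eqref{formu-a4} together with the harmless bound $|\xi|\ge1$ that lets one see that, after the weight $\langle\xi\rangle^{\gamma_0}$ is distributed, each factor still loses at most one derivative --- so that the hypothesis $f_j\in H^{\gamma_0+1}$ is exactly what the estimate requires and the derivative count is tight. This is the single-multiplier analogue of the loss-of-one-derivative bookkeeping already used in deriving the scheme. Finally, all constants are uniform in $n$, since $|\fe^{-i(t_n+t)\alpha_4}-\fe^{-it_n\alpha_4}|$ does not depend on $t_n$; working in Fourier space rather than differentiating a physical-space twisted product is what keeps the argument clean even when $\gamma_0<1$.
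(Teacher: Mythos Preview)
Your proof is correct and follows the same approach as the paper: Fourier-side analysis using the two elementary bounds $|\fe^{-it\alpha_4}-1|\le 2$ for (i) and $|\fe^{-it\alpha_4}-1|\le t|\alpha_4|$ for (ii), combined with \eqref{formu-a4} and product estimates via Kato--Ponce/Sobolev. The only cosmetic difference in (ii) is that the paper orders the frequencies as $|\xi_1|\ge|\xi_2|\ge|\xi_3|$ and reduces everything to the single model multiplier $|\xi_1|^{1+\gamma_0}|\xi_2|$, whereas you distribute $\langle\xi\rangle^{\gamma_0}$ onto the three indices and treat the finitely many resulting products; both bookkeepings are standard and lead to the same estimate.
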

\begin{proof}
We assume that $\widehat f_j$ for $j=1,2,3$ are positive, otherwise one may replace them by $|\widehat f_j|$.

(i) We only consider the non-trivial case $|\xi|\ge 1$.  Then we have
\begin{align*}
&|\xi|^{\gamma_0}\big|\mathcal F\big(\mathcal{A}_n(f_1,f_2,f_3)\big)(t,\xi)\big|\\
\lesssim &|\xi|^{\gamma_0}\int_{\xi=\xi_1+\xi_2+\xi_3}\widehat f_1(\xi_1)\widehat f_2(\xi_2)\widehat f_3(\xi_3)\,(d\xi_1)(d\xi_2)\\
=& \mathcal F\Big(|\nabla|^{\gamma_0}\big(f_1 f_2 f_3\big)\Big).
\end{align*}
Then by Plancherel's identity and Lemma \ref{lem:kato-Ponce} (i), we obtain that for $\gamma_0>\frac12$,
$$
\big\|\mathcal{A}_n(f_1,f_2,f_3)(t)\big\|_{H^{\gamma_0}}
\lesssim \big\|f_1\big\|_{H^{\gamma_0}}\big\|f_2\big\|_{H^{\gamma_0}}\big\|f_3\big\|_{H^{\gamma_0}}.
$$

(ii) Note that
$$
\Big|\fe^{-i(t_{n}+t)\alpha_4}-\fe^{-it_{n}\alpha_4}\Big|\le t|\alpha_4|,
$$
and so we have
$$
\big|\mathcal F\big(\mathcal{A}_n(f_1,f_2,f_3)\big)(t,\xi)\big|\le t|\xi|^{-1}\int_{\xi=\xi_1+\xi_2+\xi_3} |\alpha_4|\widehat f_1(\xi_1)\widehat f_2(\xi_2)\widehat f_3(\xi_3)\,(d\xi_1)(d\xi_2).
$$
Using \eqref{formu-a4}, we further get
\begin{align*}
&|\xi|^{\gamma_0}\big|\mathcal F\big(\mathcal{A}_n(f_1,f_2,f_3)\big)(t,\xi)\big|\\
\lesssim &t\int_{\xi=\xi_1+\xi_2+\xi_3} |\xi|^{\gamma_0}\big(|\xi_1||\xi_2|+|\xi_1||\xi_3|+|\xi_2||\xi_3|\big)\widehat f_1(\xi_1)\widehat f_2(\xi_2)\widehat f_3(\xi_3)\,(d\xi_1)(d\xi_2)\\
& + t\int_{\xi=\xi_1+\xi_2+\xi_3} |\xi|^{\gamma_0-1}|\xi_1||\xi_2||\xi_3|\widehat f_1(\xi_1)
\widehat f_2(\xi_2)\widehat f_3(\xi_3)\,(d\xi_1)(d\xi_2).
\end{align*}
By symmetry, we may assume that $|\xi_1|\ge |\xi_2|\ge |\xi_3|$, then
\begin{align*}
&|\xi|^{\gamma_0}\big|\mathcal F\big(\mathcal{A}_n(f_1,f_2,f_3)\big)(t,\xi)\big|\\
\lesssim &t\int_{\xi=\xi_1+\xi_2+\xi_3,|\xi_1|\ge |\xi_2|\ge |\xi_3|}|\xi_1|^{1+\gamma_0}|\xi_2|\widehat f_1(\xi_1)\widehat f_2(\xi_2)\widehat f_3(\xi_3)\,(d\xi_1)(d\xi_2)\\
=& t \mathcal F\Big(|\nabla|^{1+\gamma_0}f_1\cdot |\nabla|f_2\cdot f_3\Big).
\end{align*}
Therefore, by Plancherel's identity and Lemma \ref{lem:kato-Ponce} (ii), we obtain that for any $\gamma_1>\frac12$,
$$
\big\|\mathcal{A}_n(f_1,f_2,f_3)(t)\big\|_{H^{\gamma_0}}
\lesssim t\big\|f_1\big\|_{H^{\gamma_0+1}}\big\| f_2\big\|_{H^{\gamma_1+1}}\big\|f_3\big\|_{H^{\gamma_1}}.
$$
Since $\gamma_0>\frac12$,  by choosing $\gamma_1=\gamma_0$, we get the desired result.
\end{proof}

\subsection{Local error}\label{local 1st}
For the local truncation error $\mathcal{L}^n$ defined in (\ref{local error}), we have the following estimate.
\begin{lemma}\label{lem:local-error} Let $\gamma >\frac12$ and $0< \tau \lesssim 1$, then
$$
\big\|\mathcal{L}^n\big\|_{H^\gamma}\le C\tau^2,\quad n=0,\ldots,\frac{T}{\tau}-1,
$$
 where the constant  $C$ depends only on $\|u\|_{L^\infty_tH^{\gamma+1}_x}$.
\end{lemma}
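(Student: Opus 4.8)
The plan is to rewrite $\mathcal L^n$ as a sum of three integrals, each manifestly of size $\tau^2$ in $H^\gamma$, the key being an algebraic regrouping that exposes a cancellation between the ``$s$-variation of $v$'' contribution and the built-in correction $I_2(t_n)$. I set $\theta_n(s):=v(t_n+s)-v(t_n)$ and note that every function below has zero $x$-average, since $\widehat{u_0}(0)=0$ is conserved by the flow so that $\widehat v(t,0)=0$. Expanding the first square in \eqref{local error}, cancelling its $\tfrac14\big(\fe^{-(t_n+s)\partial_x^3}F_n\big)^2$-part against the third integral there, and writing $v(t_n+s)=v(t_n)+\theta_n(s)$ in the second square leads, after elementary algebra, to
\begin{align*}
\mathcal L^n=&-\frac12\int_0^\tau \fe^{(t_n+s)\partial_x^3}\partial_x\Big(\fe^{-(t_n+s)\partial_x^3}v(t_n)\cdot\fe^{-(t_n+s)\partial_x^3}H_n(s)\Big)\,ds\\
&-\frac12\int_0^\tau \fe^{(t_n+s)\partial_x^3}\partial_x\Big(\fe^{-(t_n+s)\partial_x^3}\theta_n(s)\Big)^2\,ds
+\frac1{18}\int_0^\tau \mathcal A_n\big(v(t_n)\big)(s)\,ds,
\end{align*}
where $H_n(s):=2\theta_n(s)-F_n\big(v(t_n),s\big)$. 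The crucial observation is that, by the Duhamel formula \eqref{v eq} for $\theta_n$ together with \eqref{def-Fn} for $F_n$,
$$H_n(s)=\int_0^s \fe^{(t_n+\rho)\partial_x^3}\partial_x\Big(\fe^{-(t_n+\rho)\partial_x^3}\theta_n(\rho)\cdot\fe^{-(t_n+\rho)\partial_x^3}\big(v(t_n+\rho)+v(t_n)\big)\Big)\,d\rho,$$
so the two individually $O(\tau)$-size pieces (the square difference and $\tfrac12 I_2(t_n)$) have cancelled, leaving only the genuinely small factor $H_n$.

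Next I would estimate the building blocks. From \eqref{v eq} and Lemma \ref{lem:kato-Ponce}(i) (valid since $\gamma>\tfrac12$) one gets $\|\theta_n(\rho)\|_{H^\gamma}\lesssim \rho\,\|v\|_{L^\infty_tH^{\gamma+1}_x}^2$ and $\|\partial_s\theta_n(s)\|_{H^{\gamma-1}}\lesssim \|v\|_{L^\infty_tH^\gamma_x}^2$; hence, bounding $\partial_x$ by one Sobolev index and applying Kato--Ponce again, $\|H_n(s)\|_{H^{\gamma-1}}\lesssim \int_0^s\|\theta_n(\rho)\|_{H^\gamma}\|v(t_n+\rho)+v(t_n)\|_{H^\gamma}\,d\rho\lesssim s^2\|v\|_{L^\infty_tH^{\gamma+1}_x}^3$ and $\|\partial_s H_n(s)\|_{H^{\gamma-1}}\lesssim s\,\|v\|_{L^\infty_tH^{\gamma+1}_x}^3$. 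Then I would bound the three integrals: the first by Lemma \ref{lem:bi-est}(ii) with $f\equiv v(t_n)$ (constant in $s$) and $g=H_n$, which gives $\lesssim \|v(t_n)\|_{H^{\gamma-1}}\big(\|H_n\|_{L^\infty_sH^{\gamma-1}_x}+\tau\|\partial_s H_n\|_{L^\infty_sH^{\gamma-1}_x}\big)\lesssim \tau^2$; the second by Lemma \ref{lem:bi-est}(ii) with $f=g=\theta_n$, which gives $\lesssim \|\theta_n\|_{L^\infty_sH^{\gamma-1}_x}^2+\tau\|\partial_s\theta_n\|_{L^\infty_sH^{\gamma-1}_x}\|\theta_n\|_{L^\infty_sH^{\gamma-1}_x}\lesssim \tau^2$; and the third by Lemma \ref{lem:An}(ii) (with $\gamma_0=\gamma$), which yields $\|\mathcal A_n(v(t_n))(s)\|_{H^\gamma}\lesssim \tau\|v(t_n)\|_{H^{\gamma+1}}^3$, hence $\big\|\int_0^\tau \mathcal A_n(v(t_n))(s)\,ds\big\|_{H^\gamma}\lesssim \tau^2\|v\|_{L^\infty_tH^{\gamma+1}_x}^3$. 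Adding the three bounds and using that $v=\fe^{t\partial_x^3}u$ is an $H^{\gamma+1}$-isometry would give $\|\mathcal L^n\|_{H^\gamma}\le C\tau^2$ with $C=C(\|u\|_{L^\infty_tH^{\gamma+1}_x})$.

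The hardest part is the regrouping in the first step. A naive term-by-term bound of \eqref{local error} is useless: the square difference $\tfrac12\int \fe^{(t_n+s)\partial_x^3}\partial_x\big[(\fe^{-(t_n+s)\partial_x^3}v(t_n))^2-(\fe^{-(t_n+s)\partial_x^3}v(t_n+s))^2\big]$ and the correction $\tfrac12 I_2(t_n)$ are each only $O(\tau)$, and one must recognise that $I_2(t_n)$ is precisely the first-order-in-$s$ part of that square difference (this is exactly why $I_2$ was designed into the scheme), so that the combination gains the extra factor $\tau$ through $H_n=O(s^2)$. A second, essential point is that the two quadratic integrals must be controlled with the $\sqrt\tau$-free estimate of Lemma \ref{lem:bi-est}(ii) rather than Lemma \ref{lem:bi-est}(i): version (i) would cost a factor $\sqrt\tau$ and close only at $\tau^{3/2}$, whereas (ii) pays instead with $H^{\gamma-1}$-control of its arguments, which is exactly what the smallness $\theta_n=O(s)$ and $H_n=O(s^2)$ provides, the derivative lost through $\partial_x$ being compensated by the hypothesis $u_0\in H^{\gamma+1}$ together with $\gamma>\tfrac12$ for the Kato--Ponce inequalities.
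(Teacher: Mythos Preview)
Your proof is correct and follows essentially the same strategy as the paper. The paper splits $\mathcal L^n=\mathcal L^n_1+\mathcal L^n_2+\mathcal L^n_3$ keeping the $F_n^2$-term separate and factoring $\mathcal L^n_1$ as $(a-b)(a+b)$ with $a=v(t_n)+\tfrac12 F_n$, $b=v(t_n+s)$; you instead first cancel the $\tfrac14 F_n^2$ against $\mathcal L^n_2$ and then regroup into the $v(t_n)\cdot H_n$ and $\theta_n^2$ pieces. Both routes hinge on the identical key observation that $v(t_n+s)-v(t_n)-\tfrac12 F_n(v(t_n),s)\ (=\tfrac12 H_n(s))$ is $O(s^2)$ in $H^{\gamma-1}$, both apply Lemma~\ref{lem:bi-est}(ii) to the bilinear integrals, and both close with Lemma~\ref{lem:An}(ii) for the $\mathcal A_n$-term.
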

\begin{proof}
We split $\mathcal{L}^n$ into the following three parts as
$$
\mathcal{L}^n=\mathcal{L}^n_1+\mathcal{L}^n_2+\mathcal{L}^n_3,
$$
where
\begin{subequations}
\begin{align}
\mathcal{L}^n_1=
&\frac12 \int_0^\tau\fe^{(t_n+s)\partial_x^3}
 \partial_x\left[\fe^{-(t_n+s)\partial_x^3}\left(v(t_n)
 +\frac12F_n\left(v(t_n),s\right)\right)\right]^2ds\notag\\
 &-\frac12\int_0^\tau\fe^{(t_n+s)\partial_x^3}
 \partial_x\left(\fe^{-(t_n+s)\partial_x^3}v(t_n+s)\right)^2ds,\label{L1}\\
\mathcal{L}^n_2=
&-\frac18\int_0^\tau\fe^{(t_n+s)\partial_x^3}
 \partial_x\left(\fe^{-(t_n+s)\partial_x^3}F_n\big(v(t_n),s\big)\right)^2ds,\label{L2}
\\
\mathcal{L}^n_3=
&\frac1{18}\int_0^\tau \mathcal{A}_n\big(v(t_n)\big)(s) \,ds.\notag
\end{align}
\end{subequations}

For $\mathcal{L}^n_1$, we have that
\begin{align*}
\mathcal{L}^n_1=
&-\frac12 \int_0^\tau\fe^{(t_n+s)\partial_x^3}
 \partial_x\Bigg[\fe^{-(t_n+s)\partial_x^3}\left(v(t_{n}+s)-v(t_n)-\frac12 F_n\big(v(t_n),s\big)\right)\\
 &\qquad\qquad\ \cdot \fe^{-(t_n+s)\partial_x^3}\left(v(t_{n}+s)+v(t_n)+\frac12 F_n\big(v(t_n),s\big)\right) \Bigg]ds.
\end{align*}
Then by Lemma \ref{lem:bi-est} (ii), we get
\begin{align*}
&\big\|\mathcal{L}^n_1\big\|_{H^{\gamma}}\\
\lesssim
&\big\|v(t_{n}+s)-v(t_n)-\frac12 F_n\big(v(t_n),s\big)\big\|_{L^\infty_tH^{\gamma-1}_x}\big\|v(t_{n}+s)+
v(t_n)+\frac12 F_n\big(v(t_n),s\big)\big\|_{L^\infty_tH^{\gamma-1}_x}\\
 &+\tau \left\|\partial_s\Big(v(t_{n}+s)-v(t_n)-\frac12 F_n\big(v(t_n),s\big)\Big)\right\|_{L^\infty_tH^{\gamma-1}_x}
 \big\|v(t_{n}+s)+v(t_n)+\frac12 F_n\big(v(t_n),s\big)\big\|_{L^\infty_tH^{\gamma-1}_x}\\
& +\tau\big\|v(t_{n}+s)-v(t_n)-\frac12 F_n\big(v(t_n),s\big)\big\|_{L^\infty_tH^{\gamma-1}_x}
\left\|\partial_s\Big(v(t_{n}+s)+v(t_n)+\frac12 F_n\big(v(t_n),s\big)\Big)\right\|_{L^\infty_tH^{\gamma-1}_x}.
\end{align*}
Note that from \eqref{vt eq},
\begin{align*}
&\quad v(t_{n}+s)-v(t_n)-\frac12 F_n\left(v(t_n),s\right)\notag\\
&=\frac12\int_0^s \fe^{(t_{n}+t)\partial_x^3}\partial_x\left[\fe^{-(t_{n}+t)\partial_x^3}
\left(v(t_n+t)-v(t_n)\right)
\cdot \fe^{-(t_{n}+t)\partial_x^3}\left(v(t_n+t)+v(t_n)\right)\right]dt \\
&= \frac14\int_0^s\!\! \int_0^{t} \fe^{(t_{n}+t)\partial_x^3}\partial_x\left[\fe^{-(t-\rho)\partial_x^3}
\partial_x\left(\fe^{-(t_{n}+\rho)\partial_x^3}v(t_n+\rho)\right)^2
\cdot \fe^{-(t_{n}+t)\partial_x^3}\left(v(t_n+t)+v(t_n)\right)\right]d\rho dt.
\end{align*}
Hence,   by Lemma \ref{lem:kato-Ponce} (i),  we get that for any $s\le \tau$,
\begin{align}
\left\|v(t_n+s)-v(t_n)-\frac12 F_n\left(v(t_n),s\right)\right\|_{H^{\gamma-1}}
\lesssim \tau^2 \left\|v\right\|_{L^\infty_t H^{\gamma+1}_x}^3.\label{est:v-v-F}
\end{align}
Similarly, we have
\begin{align*}
&\quad \partial_s\left(v(t_{n}+s)-v(t_n)-\frac12 F_n\big(v(t_n),s\big)\right)\\
&=\frac14\int_0^{s} \fe^{-(t_{n}+s)\partial_x^3}\partial_x\left[\fe^{-(s-\rho)\partial_x^3}
\partial_x\left(\fe^{-(t_{n}+\rho)\partial_x^3}\left(v(t_n+\rho)\right)^2
\cdot \fe^{-(t_{n}+s)\partial_x^3}\left(v(t_n+s)+v(t_n)\right)\right)\right]d\rho.
\end{align*}
This implies that  for any  $s\le \tau$,
$$
\left\|\partial_s\left(v(t_{n}+s)-v(t_n)-\frac12 F_n\big(v(t_n),s\big)\right)\right\|_{L^\infty_tH^{\gamma-1}_x}
\lesssim \tau \left\|v\right\|_{L^\infty_t H^{\gamma+1}_x}^3.
$$
Moreover, from the definition \eqref{def-Fn}, we have that for any $0\le s\le \tau$,
\begin{align}
\left\|\partial_s F_n\left(v(t_n),s\right)\right\|_{H^{\gamma-1}}
\lesssim \left\|v\right\|_{L^\infty_t H^{\gamma}_x}^2, 
\quad \left\|F_n\left(v(t_n),s\right)\right\|_{H^{\gamma-1}}
\lesssim \tau\left\|v\right\|_{L^\infty_t H^{\gamma}_x}^2.\label{est:Fn}
\end{align}
Similarly, by \eqref{vt eq}, we have  that for any $0\le s\le \tau$,
\begin{align}
\left\|\partial_s v(t_n+s)\right\|_{H^{\gamma-1}}
\lesssim \left\|v\right\|_{L^\infty_t H^{\gamma}_x}^2.\label{est:vn}
\end{align}
Hence, using these estimates, we obtain that for any $s\le \tau$,
\begin{align*}
\left\|v(t_{n}+s)+v(t_n)+\frac12 F_n\big(v(t_n),s\big)\right\|_{L^\infty_tH^{\gamma-1}_x}\lesssim
\left\|v\right\|_{L^\infty_t H^{\gamma}_x}+\left\|v\right\|_{L^\infty_t H^{\gamma}_x}^2,
\end{align*}
and
\begin{align*}
\left\|\partial_s\left(v(t_{n}+s)+v(t_n)+\frac12 F_n\big(v(t_n),s\big)\right)\right\|_{L^\infty_tH^{\gamma-1}_x}
\lesssim
\left\|v\right\|_{L^\infty_t H^{\gamma}_x}^2.
\end{align*}
Therefore, combining with the estimates above, we have
\begin{align}
\left\|\mathcal{L}^n_1\right\|_{H^{\gamma}}
\lesssim \tau^2\left( \left\|v\right\|_{L^\infty_tH^{\gamma+1}_x}^4+\left\|v\right\|_{L^\infty_t H^{\gamma+1}_x}^5\right).\label{est:Ln-1}
\end{align}

For $\mathcal{L}^n_2$, by Lemma \ref{lem:bi-est} (ii) again, we get
\begin{align}
\left\|\mathcal{L}^n_2\right\|_{H^{\gamma}}
\lesssim \left\|F_n\left(v(t_n),s\right)\right\|_{H^{\gamma-1}}^2
+ \tau\left\|F_n\left(v(t_n),s\right)\right\|_{H^{\gamma-1}}\left\|\partial_s F_n\left(v(t_n),s\right)\right\|_{H^{\gamma-1}}.
\end{align}
Hence, by  \eqref{est:Fn}, we obtain that
\begin{align}
\left\|\mathcal{L}^n_2\right\|_{H^{\gamma}}
\lesssim \tau^2 \left\|v\right\|_{L^\infty_tH^{\gamma}_x}^4.\label{est:Ln-2}
\end{align}

For $\mathcal{L}_3^n$, from Lemma \ref{lem:An} (ii), we get
\begin{align}
\left\|\mathcal{L}^n_3\right\|_{H^{\gamma}}
\lesssim \tau^2 \left\|v\right\|_{L^\infty_t H^{\gamma+1}_x}^3.\label{est:Ln-3}
\end{align}

Combing \eqref{est:Ln-1},  \eqref{est:Ln-2}  and \eqref{est:Ln-3}, the lemma is proved.
\end{proof}

\subsection{Stability}\label{stable 1st}
For the numerical propagator $\Phi^n$ defined in (\ref{propagator}), we have the following stability result.
\begin{lemma}\label{lem:stability} Let $\gamma >\frac12$, then for $n=0,\ldots,\frac{T}{\tau}-1$,
$$
\left\|\Phi^n(v^n)-\Phi^n(v(t_n))\right\|_{H^\gamma}\le \left(1+ C\tau+C\sqrt\tau\left\|v^n-v(t_n)\right\|_{H^\gamma}+C\tau\left\|v^n-v(t_n)\right\|_{H^\gamma}^4\right)\left\|v^n-v(t_n)\right\|_{H^\gamma},
$$
 where the constant  $C$ depends only on $\|u\|_{L^\infty_tH^{\gamma+1}_x}$.
\end{lemma}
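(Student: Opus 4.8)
The plan is to run the classical stability estimate in the twisted variable, isolating the single term that is genuinely responsible for the $\sqrt\tau$ loss. Write $e:=v^n-v(t_n)$ and $D:=\Phi^n(v^n)-\Phi^n(v(t_n))$. Expanding the two nonlinear integrals in \eqref{propagator} multilinearly, I would split
$$D=e+L+N,\qquad L:=\int_0^\tau \fe^{(t_n+s)\partial_x^3}\partial_x\big(\fe^{-(t_n+s)\partial_x^3}v(t_n)\cdot \fe^{-(t_n+s)\partial_x^3}e\big)\,ds,$$
where $L$ is the unique ``dangerous'' piece — bilinear, \emph{linear} in $e$, produced by the quadratic nonlinearity $\tfrac12\int_0^\tau \fe^{(t_n+s)\partial_x^3}\partial_x(\fe^{-(t_n+s)\partial_x^3}v)^2\,ds$ through $a^2-b^2=(a-b)(a+b)$ with $a=\fe^{-(t_n+s)\partial_x^3}v^n$, $b=\fe^{-(t_n+s)\partial_x^3}v(t_n)$ — and $N$ collects everything else: the error self-interaction $\tfrac12\int_0^\tau \fe^{(t_n+s)\partial_x^3}\partial_x(\fe^{-(t_n+s)\partial_x^3}e)^2\,ds$ together with all the trilinear differences generated by the $F_n$ term and by the operator $\mathcal A_n$ (each such difference carrying at least one factor $e$, the remaining factors being $v(t_n)$). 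I would then expand $\|D\|_{H^\gamma}^2=\langle J^\gamma D,J^\gamma e\rangle+\langle J^\gamma D,J^\gamma L\rangle+\langle J^\gamma D,J^\gamma N\rangle$ and absorb the $\tfrac14\|D\|_{H^\gamma}^2$ that will come out of the $N$-term by Young's inequality.

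Next I would dispose of $N$. The self-interaction is bounded by Lemma~\ref{lem:bi-est}(i) with both arguments equal to the time-independent $e$, giving $\lesssim\sqrt\tau\,\|e\|_{H^\gamma}^2$; this, and only this, is genuinely of size $\sqrt\tau$, and it is exactly what forces the $\sqrt\tau\,\|e\|_{H^\gamma}$ in the statement, since no higher Sobolev norm of $e$ is at our disposal. Each trilinear $\mathcal A_n$-difference is controlled pointwise in $s$ by Lemma~\ref{lem:An}(i) and then integrated, producing $\lesssim\tau\,\|e\|_{H^\gamma}^k\|v(t_n)\|_{H^\gamma}^{3-k}$ with $k\in\{1,2,3\}$; each trilinear $F_n$-difference is handled by Lemma~\ref{lem:bi-est} together with the bounds on $F_n$ and $\partial_sF_n$ coming from \eqref{est:Fn} and $\partial_sF_n(v,s)=\fe^{(t_n+s)\partial_x^3}\partial_x(\fe^{-(t_n+s)\partial_x^3}v)^2$ (and the bilinear analogues of these), which again extract an explicit factor $\tau$ while only paying for the spatial derivatives available thanks to the two $\partial_x^{-1}$'s sitting inside $F_n$. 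Altogether $\|N\|_{H^\gamma}\lesssim\sqrt\tau\,\|e\|_{H^\gamma}^2+C\tau\big(\|e\|_{H^\gamma}+\|e\|_{H^\gamma}^2+\|e\|_{H^\gamma}^3\big)$ with $C=C(\|u\|_{L^\infty_tH^{\gamma+1}_x})$, whence $\langle J^\gamma D,J^\gamma N\rangle\le\tfrac14\|D\|_{H^\gamma}^2+\|N\|_{H^\gamma}^2$ and $\langle J^\gamma N,J^\gamma e\rangle\le\|N\|_{H^\gamma}\|e\|_{H^\gamma}$; both contribute only terms with an explicit $\tau$ or the already-accepted $\sqrt\tau\,\|e\|_{H^\gamma}^3$.

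The heart of the proof is the term $\langle J^\gamma L,J^\gamma e\rangle$: estimating $L$ directly by Lemma~\ref{lem:bi-est}(i) would only give $\sqrt\tau\,\|v(t_n)\|_{H^\gamma}\|e\|_{H^\gamma}$, too weak for the $(1+C\tau)$ structure, so one must test against $J^\gamma e$. Using that $\fe^{t\partial_x^3}$ is skew-adjoint and unitary on $L^2$ and commutes with $J^\gamma$ and $\partial_x$, and setting $\psi_s:=\fe^{-(t_n+s)\partial_x^3}v(t_n)$, $\phi_s:=\fe^{-(t_n+s)\partial_x^3}e$ (so $\|\psi_s\|_{H^\sigma}=\|v(t_n)\|_{H^\sigma}$, $\|\phi_s\|_{H^\sigma}=\|e\|_{H^\sigma}$),
$$\big\langle J^\gamma L,J^\gamma e\big\rangle=\int_0^\tau\big\langle J^\gamma\partial_x(\psi_s\phi_s),J^\gamma\phi_s\big\rangle\,ds,$$
and I would apply Lemma~\ref{lm3.5}, precisely the estimate \eqref{Comm-2} (available since $\gamma>\tfrac12$), to get $|\langle J^\gamma\partial_x(\psi_s\phi_s),J^\gamma\phi_s\rangle|\lesssim\|\phi_s\|_{H^\gamma}^2\|\psi_s\|_{H^{\gamma+1}}=\|e\|_{H^\gamma}^2\|v(t_n)\|_{H^{\gamma+1}}$ uniformly in $s\in[0,\tau]$; integrating gives $|\langle J^\gamma L,J^\gamma e\rangle|\lesssim\tau\,\|e\|_{H^\gamma}^2\,\|u\|_{L^\infty_tH^{\gamma+1}_x}$. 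This is the crucial mechanism — upgrading the a priori $O(\sqrt\tau)$ size of a bilinear term that is linear in $e$ to an $O(\tau)$ contribution while spending only \emph{one} spatial derivative of the exact solution — and this is where I expect the main difficulty to lie, because without the integration-by-parts/commutator structure of Lemma~\ref{lm3.5} one is driven either to put the extra derivative on the rough factor $e$ or to forfeit the full power of $\tau$. The remaining piece $\langle J^\gamma D,J^\gamma L\rangle$ expands into $\langle J^\gamma e,J^\gamma L\rangle$ (identical to the above), $\|L\|_{H^\gamma}^2\lesssim\tau\,\|v(t_n)\|_{H^\gamma}^2\|e\|_{H^\gamma}^2$ and $\|N\|_{H^\gamma}\|L\|_{H^\gamma}$, all of order $\tau$. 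Putting everything together and absorbing $\tfrac14\|D\|_{H^\gamma}^2$ gives $\|D\|_{H^\gamma}^2\le\|e\|_{H^\gamma}^2\big(1+C\tau+C\sqrt\tau\,\|e\|_{H^\gamma}+C\tau(\|e\|_{H^\gamma}+\|e\|_{H^\gamma}^2+\|e\|_{H^\gamma}^3+\|e\|_{H^\gamma}^4)\big)$; bounding the lower powers of $\|e\|_{H^\gamma}$ by $1+\|e\|_{H^\gamma}^4$, taking square roots, and using $\sqrt{1+x}\le 1+\tfrac12 x$ for $x\ge0$ yields the stated inequality.
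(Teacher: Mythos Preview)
Your approach is essentially the paper's: the same splitting into the linear-in-$e$ cross term (your $L$, the paper's second half of $\Phi^n_1$) handled via Lemma~\ref{lm3.5}, the self-interaction via Lemma~\ref{lem:bi-est}(i), and the $F_n$- and $\mathcal A_n$-differences via Lemmas~\ref{lem:bi-est} and~\ref{lem:An}(i). The key mechanism is correctly identified.

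There is, however, one genuine slip: you propose to bound $\langle J^\gamma D,J^\gamma N\rangle\le\tfrac14\|D\|_{H^\gamma}^2+\|N\|_{H^\gamma}^2$ and absorb the $\tfrac14\|D\|_{H^\gamma}^2$ into the left-hand side. After absorption you are left with $\tfrac34\|D\|_{H^\gamma}^2$ on the left, so upon dividing you obtain
\[
\|D\|_{H^\gamma}^2\le \tfrac43\|e\|_{H^\gamma}^2+\text{(small terms)},
\]
hence $\|D\|_{H^\gamma}\le \tfrac{2}{\sqrt3}\,\|e\|_{H^\gamma}(1+\cdots)$. The prefactor $\tfrac{2}{\sqrt3}>1$ is a fixed constant independent of $\tau$, and it destroys the $(1+C\tau)$ structure that is essential for the subsequent discrete Gronwall iteration in the proof of Theorem~\ref{thm:convergence} (a factor $(\tfrac{2}{\sqrt3})^{T/\tau}$ blows up as $\tau\to0$). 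The fix is immediate and you already have every ingredient for it: do \emph{not} apply Young to $\langle D,N\rangle$, but expand it as $\langle e,N\rangle+\langle L,N\rangle+\|N\|_{H^\gamma}^2$, exactly as you do for $\langle D,L\rangle$. Equivalently, write directly
\[
\|D\|_{H^\gamma}^2=\|e+L+N\|_{H^\gamma}^2\le \|e\|_{H^\gamma}^2+2\langle J^\gamma L,J^\gamma e\rangle+2\|e\|_{H^\gamma}\|N\|_{H^\gamma}+2\|L\|_{H^\gamma}^2+2\|N\|_{H^\gamma}^2,
\]
which is what the paper does (with a slightly different grouping). Then the leading coefficient of $\|e\|_{H^\gamma}^2$ is exactly $1$, and all remaining terms carry the needed $\tau$ or $\sqrt\tau\,\|e\|_{H^\gamma}$.
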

\begin{proof} Note that for $n=0,\ldots,\frac{T}{\tau}-1$,
\begin{align*}
&\Phi^n(v^n)-\Phi^n(v(t_n))
 =v^n-v(t_n)+\Phi^n_1+\Phi^n_2+\Phi^n_3,
\end{align*}
where we denote
\begin{align*}
\Phi^n_1=&\frac12 \int_0^\tau
 \fe^{(t_n+s)\partial_x^3}\partial_x\left[\left(\fe^{-(t_n+s)\partial_x^3}
 v^n\right)^2-\left(\fe^{-(t_n+s)\partial_x^3}
 v(t_n)\right)^2\right]ds,\\
\Phi^n_2=&\frac12\int_0^\tau\fe^{(t_n+s)\partial_x^3}
 \partial_x\Big[\fe^{-(t_n+s)\partial_x^3}v^n\cdot \fe^{-(t_n+s)\partial_x^3}F_n(v^n,s)\\
 &\qquad\qquad\qquad\qquad-\fe^{-(t_n+s)\partial_x^3}v(t_n)\cdot \fe^{-(t_n+s)\partial_x^3}F_n(v(t_n),s)\Big]ds,\\
 \Phi^n_3=&\frac1{18}\int_0^\tau \left[\mathcal{A}_n\left(v^n\right)(s)
 -\mathcal{A}_n\left(v(t_n)\right)(s)\right]ds.
\end{align*}
For short, we denote the error $e_n=v^n-v(t_n)$ within this lemma, then
\begin{align*}
\left\|\Phi\left(v(t_n)\right)-\Phi\left(v^n\right)\right\|_{H^\gamma}^2
\le& \|e_n\|_{H^\gamma}^2+2\left\langle J^\gamma\Phi^n_1,J^\gamma e_n\right\rangle
+2\|e_n\|_{H^\gamma}\left\|\Phi^n_2\right\|_{H^\gamma}+2\|e_n\|_{H^\gamma}\left\|\Phi^n_3\right\|_{H^\gamma}
\\
&+3\left\|\Phi^n_1\right\|_{H^\gamma}^2+3\left\|\Phi^n_2\right\|_{H^\gamma}^2
+3\left\|\Phi^n_3\right\|_{H^\gamma}^2.
\end{align*}

Firstly, we rewrite $\Phi^n_1$ as
\begin{align*}
\Phi^n_1=&\frac12 \int_0^\tau
 \fe^{(t_n+s)\partial_x^3}\partial_x\Big(\fe^{-(t_n+s)\partial_x^3}
 e_n\Big)^2ds+ \int_0^\tau
 \fe^{(t_n+s)\partial_x^3}\partial_x\Big(\fe^{-(t_n+s)\partial_x^3}
 e_n\cdot\fe^{-(t_n+s)\partial_x^3}v(t_n)\Big)ds,
 \end{align*}
 and thus
\begin{subequations}\label{Phi-1}
\begin{align}
\left\langle J^\gamma\Phi^n_1,J^\gamma e_n\right\rangle
=&\frac12
 \left\langle  \int_0^\tau J^\gamma\fe^{-(t_n+s)\partial_x^3}\partial_x\left(\fe^{-(t_n+s)\partial_x^3}
 e_n\right)^2 ds,J^\gamma e_n\right\rangle \label{Phi-1-1}\\
 &+ \int_0^\tau
 \left\langle J^\gamma \partial_x\left(\fe^{-(t_n+s)\partial_x^3}
 e_n\cdot\fe^{-(t_n+s)\partial_x^3}v(t_n)\right),J^\gamma \fe^{-(t_n+s)\partial_x^3}e_n\right\rangle ds\label{Phi-1-2}
\end{align}
\end{subequations}
By using Lemma \ref{lem:bi-est} (i), we get
\begin{align}
|\eqref{Phi-1-1}|
\lesssim
&
  \left\|\int_0^\tau \fe^{-(t_n+s)\partial_x^3}\partial_x\left(\fe^{-(t_n+s)\partial_x^3}
 e_n\right)^2 ds\right\|_{H^\gamma} \|e_n\|_{H^\gamma}
\lesssim
\sqrt\tau \|e_n\|_{H^\gamma}^3.
\end{align}
Furthermore, using Lemma \ref{lm3.5} (i), we get
\begin{align*}
|\eqref{Phi-1-2}|
\lesssim
&\tau \left\|v(t_n)\right\|_{H^{\gamma+1}}\left\|e_n\right\|_{H^\gamma}^2.
\end{align*}
Combining with the two finding on \eqref{Phi-1} above, we obtain that
\begin{align}
\left|\left\langle J^\gamma\Phi^n_1,J^\gamma e_n\right\rangle\right|
\lesssim
&\tau\left\|e_n\right\|_{H^\gamma}^2+\sqrt\tau \|e_n\|_{H^\gamma}^3.\label{est:Phi-1-f}
\end{align}

For $\left\|\Phi^n_1\right\|_{H^\gamma}$, from Lemma \ref{lem:bi-est} (i), we have
\begin{align}
\left\|\Phi_1^n\right\|_{H^\gamma}\lesssim
\sqrt\tau \Big(\left\|v\right\|_{L^\infty_tH^{\gamma}_x}\left\|e_n\right\|_{H^{\gamma}}
+\left\|e_n\right\|_{H^{\gamma}}^2\Big).\label{est:Phi-1}
\end{align}

Secondly,  we rewrite $\Phi^n_2$ as
\begin{subequations}
\begin{align}
\Phi^n_2=&\frac12\int_0^\tau\fe^{(t_n+s)\partial_x^3}
 \partial_x\left[\fe^{-(t_n+s)\partial_x^3}e_n\cdot \fe^{-(t_n+s)\partial_x^3}F_n(v(t_n),s)\right]ds\label{13.51-1}\\
 &+\frac12\int_0^\tau\fe^{(t_n+s)\partial_x^3}
 \partial_x\left[\fe^{-(t_n+s)\partial_x^3}e_n\cdot \fe^{-(t_n+s)\partial_x^3}\left(F_n(v^n,s)-F_n(v(t_n),s)\right)\right]ds\label{13.51-2}\\
 &+\frac12\int_0^\tau\fe^{(t_n+s)\partial_x^3}
 \partial_x\left[\fe^{-(t_n+s)\partial_x^3}v(t_n)\cdot \fe^{-(t_n+s)\partial_x^3}\left(F_n(v^n,s)-F_n(v(t_n),s)\right)\right]ds.\label{13.51-3}
\end{align}
\end{subequations}
For \eqref{13.51-1}, using Lemma \ref{lem:bi-est} (ii), we have
\begin{align*}
\left\|\eqref{13.51-1}\right\|_{H^\gamma}
\lesssim &\left\|e_n\right\|_{H^{\gamma-1}}\left\|F_n(v(t_n),\cdot)\right\|_{L^\infty_tH^{\gamma-1}_x}
+\tau \left\|e_n\right\|_{H^{\gamma-1}}\left\|\partial_tF_n(v(t_n),\cdot)\right\|_{L^\infty_tH^{\gamma-1}_x}.
\end{align*}
Hence, using  \eqref{est:Fn}, we get
\begin{align}\label{est:13.51-1}
\big\|\eqref{13.51-1}\big\|_{H^\gamma}
\lesssim &\tau\|v\|_{L^\infty_tH^{\gamma+1}_x}^2\left\|e_n\right\|_{H^{\gamma}}.
\end{align}
For \eqref{13.51-2}, using Lemma \ref{lem:bi-est} (ii) again,  we have
\begin{align*}
\big\|\eqref{13.51-2}\big\|_{H^\gamma}
\lesssim &\left\|e_n\right\|_{H^{\gamma-1}}\left\|F_n(v^n,\cdot)-F_n(v(t_n),\cdot)\right\|_{L^\infty_tH^{\gamma-1}_x}\\
& +\tau \left\|e_n\right\|_{H^{\gamma-1}}
\left\|\partial_tF_n(v^n,\cdot)-\partial_tF_n(v(t_n),\cdot)\right\|_{L^\infty_tH^{\gamma-1}_x}.
\end{align*}
From \eqref{def-Fn} and Lemma \ref{lem:kato-Ponce} (i),
\begin{align*}
\left\|F_n(v^n,\cdot)-F_n(v(t_n),\cdot)\right\|_{L^\infty_tH^{\gamma-1}_x}
\lesssim & \int_0^\tau\left\|\left(\fe^{-(t_n+t)\partial_x^3}v^n\right)^2
-\left(\fe^{-(t_n+t)\partial_x^3}v(t_n)\right)^2\right\|_{H^{\gamma}}\,dt\\
\lesssim &\tau\left\|e_n\right\|_{H^{\gamma}}^2+\tau\|v\|_{L^\infty_tH^{\gamma}_x}\left\|e_n\right\|_{H^{\gamma}};\\
\left\|\partial_tF_n(v^n,\cdot)-\partial_tF_n(v(t_n),\cdot)\right\|_{L^\infty_tH^{\gamma-1}_x}
\lesssim & \left\|\left(\fe^{-(t_n+t)\partial_x^3}v^n\right)^2
-\left(\fe^{-(t_n+t)\partial_x^3}v(t_n)\right)^2\right\|_{L^\infty_tH^{\gamma}_x}\\
\lesssim &\left\|e_n\right\|_{H^{\gamma}}^2+\|v\|_{L^\infty_tH^{\gamma}_x}\|e_n\|_{H^{\gamma}}.
\end{align*}
Using these two estimates, we get
\begin{align}\label{est:13.51-2}
\big\|\eqref{13.51-2}\big\|_{H^\gamma}
\lesssim &\tau\left\|e_n\right\|_{H^{\gamma}}^3+\tau\|v\|_{L^\infty_t H^{\gamma}_x}\left\|e_n\right\|_{H^{\gamma}}^2.
\end{align}
Similarly as for \eqref{13.51-2}, we have
\begin{align}\label{est:13.51-3}
\big\|\eqref{13.51-3}\big\|_{H^\gamma}
\lesssim &\tau\|v\|_{L^\infty_tH^{\gamma}}\left\|e_n\right\|_{H^{\gamma}}^2
+\tau\|v\|_{L^\infty_tH^{\gamma}_x
}^2\left\|e_n\right\|_{H^{\gamma}}.
\end{align}
Together with the estimates in \eqref{est:13.51-1}, \eqref{est:13.51-2} and \eqref{est:13.51-3}, we get
\begin{align}
\left\|\Phi_2^n\right\|_{H^\gamma}\le
C\tau \left(\left\|e_n\right\|_{H^{\gamma}}+\left\|e_n\right\|_{H^{\gamma}}^3\right),\label{est:Phi-2}
\end{align}
 where the constant  $C$ depends only on $\|u\|_{L^\infty_tH^{\gamma+1}_x}$.

For $\big\|\Phi_3^n\big\|_{H^\gamma}$, we use Lemma \ref{lem:An} (i) to yield
\begin{align}
\left\|\Phi_3^n\right\|_{H^\gamma}\le
C\tau \left(\left\|e_n\right\|_{H^{\gamma}}+\left\|e_n\right\|_{H^{\gamma}}^3\right),\label{est:Phi-3}
\end{align}
 where the constant  $C$ depends only on $\|u\|_{L^\infty_tH^{\gamma}_x}$.

Combining with \eqref{est:Phi-1-f}, \eqref{est:Phi-1}, \eqref{est:Phi-2} and \eqref{est:Phi-3}, we obtain
\begin{align*}
\left\|\Phi\left(v(t_n)\right)-\Phi\left(v^n\right)\right\|_{H^\gamma}^2
\le& \|e_n\|_{H^\gamma}^2+C\tau \left(\left\|e_n\right\|_{H^{\gamma}}^2
+\left\|e_n\right\|_{H^{\gamma}}^6\right)+\sqrt\tau \|e_n\|_{H^\gamma}^3.
\end{align*}
Applying the inequality $\sqrt{1+a}\le 1+a$ for any $a>0$, we obtain the desired result.
\end{proof}

\subsection{Proof of Theorem \ref{thm:convergence}}
Now, combining the local error estimate and the stability result, we give the proof of Theorem \ref{thm:convergence}.
From Lemma \ref{lem:local-error} and Lemma \ref{lem:stability}, there exits a constant $C>0$ such that for $0<\tau\leq1$, we have
\begin{align}
\left\|v(t_{n+1})-v^{n+1}\right\|_{H^\gamma}
\le & C_1\tau^2+\left(1+ C_2\tau+C_3\sqrt\tau\left\|v^n-v(t_n)\right\|_{H^\gamma}+C_4\tau\left\|v^n-v(t_n)\right\|_{H^\gamma}^4\right)\notag\\
&\quad \cdot\left\|v^n-v(t_n)\right\|_{H^\gamma},\ n=0,1,\ldots,\frac{T}{\tau}-1,\label{iteration}
\end{align}
where $C_j$ for $j=1,\cdots,4 $ depend on $\|u\|_{L^\infty_tH^{\gamma+1}_x}$.
 Now we claim that there exists some $\tau_0>0$ (to be determined) such that for any $\tau\in (0,\tau_0]$ and  any $ n=0,1,\ldots,\frac{T}{\tau}$,
\begin{align}\label{Hy}
\left\|v(t_{n})-v^{n}\right\|_{H^\gamma}
\le C_1 \tau^2 \sum\limits_{j=0}^n(1+2C_2\tau)^j.
\end{align}
We prove it by induction. Note that \eqref{Hy} trivially  holds for $n=0$.  Now we assume that it holds 
till some $0\le n_0\le \frac{T}{\tau}-1$, i.e.
\begin{align}\label{Hy-1}
\left\|v(t_{n})-v^{n}\right\|_{H^\gamma}
\le C_1 \tau^2 \sum\limits_{j=0}^n(1+2C_2\tau)^j, \mbox{ for any } 0\le n\le n_0.
\end{align}
From \eqref{Hy-1}, we have that for any $ 0\le n\le n_0$,
\begin{align}\label{bound-v-vn}
\left\|v(t_{n})-v^{n}\right\|_{H^\gamma}
\le C_5\tau,
\end{align}
where $C_5=2C_1C_2^{-1}\fe^{2C_2T}$. Then by \eqref{iteration}, we find
\begin{align*}
\left\|v(t_{n_0+1})-v^{n_0+1}\right\|_{H^\gamma}
\le & C_1\tau^2+\left(1+ C_2\tau+C_3C_5\tau^\frac32+C_4C_5^4\tau^5\right)
\cdot C_1 \tau^2 \sum\limits_{j=0}^{n_0}(1+2C_2\tau)^j.
\end{align*}
Choose $\tau_0>0$ such that
$$
C_3C_5\tau_0^\frac12+C_4C_5^4\tau_0^4\le C_2,
$$
then for any $\tau\in (0,\tau_0]$, we obtain that
\begin{align*}
\left\|v(t_{n_0+1})-v^{n_0+1}\right\|_{H^\gamma}
\le & C_1\tau^2+\left(1+ 2C_2\tau\right)
\cdot C_1 \tau^2 \sum\limits_{j=0}^{n_0}(1+2C_2\tau)^j\\
= & C_1\tau^2+
 C_1 \tau^2 \sum\limits_{j=0}^{n_0}(1+2C_2\tau)^{j+1}
<C_1 \tau^2\sum\limits_{j=0}^{n_0+1}(1+2C_2\tau)^{j}.
\end{align*}
This finishes the induction and proves \eqref{Hy}. 
Hence, we get \eqref{bound-v-vn} for any $n=0,1,\ldots,\frac{T}{\tau}$,  and  Theorem \ref{thm:convergence} is proved.\qed



\section{The second-order convergence analysis}\label{sec:2ord-proof}
In this section, we give the rigorous proof of the second convergence result: Theorem \ref{thm:convergence-2ord} for the proposed ELRI2 scheme (\ref{def:LRI-2ord scheme}). Similarly as before,  we still have the zero-average of the initial value in (\ref{model}) and prove the convergence result (\ref{main result-2}) for the mild solution $v$ (\ref{v eq}) and $v^n$ (\ref{lem:LRI-2or}). We shall adopt some of the notations from the previous section.

Based on the derivation of the ELRI2 scheme in Section \ref{sec:scheme}, we can rewrite $v^{n+1}$ from (\ref{lem:LRI-2or}) as
\begin{align}
v^{n+1}
 =&{v}^n+
 \frac12\int_0^\tau\fe^{(t_n+s)\partial_x^3}
 \partial_x\left(\fe^{-(t_n+s)\partial_x^3}{ v}^n\right)^2\,ds\notag\\
 &+ \frac12\int_0^\tau\fe^{(t_n+s)\partial_x^3}
 \partial_x\left(\fe^{-(t_n+s)\partial_x^3}{v}^n\cdot \fe^{-(t_n+s)\partial_x^3}F_n({ v}^n,s)\right)\,ds
 +\frac1{18}\int_0^\tau \widetilde{\mathcal{A}}_n({ v}^n)(s) \,ds,\label{LRI-discrete-revised-2}
\end{align}
where
we define the operator $\widetilde{\mathcal{A}}_n$ in Fourier space as
\begin{equation}\label{def:tilde-An}
 \mathcal F\big(\widetilde{\mathcal{A}}_n(f_1,f_2,f_3)\big)(t,\xi)\\
  := \left\{ \aligned
    &0,\qquad \mbox{if}\quad \xi=0,\\
    &(i\xi)^{-1}\int_{\xi=\xi_1+\xi_2+\xi_3} \Big(\fe^{-it\alpha_4}-1+\frac12i\tau\alpha_4\fe^{-it\alpha_4}\Big)\fe^{-it_{n}\alpha_4}\\
    &\qquad\qquad\cdot \widehat f_1(\xi_1)\widehat f_2(\xi_2)\widehat f_3(\xi_3)\,(d\xi_1)(d\xi_2), \qquad \mbox{if}\quad  \xi\ne 0,
   \endaligned
  \right.
 \end{equation}
and similarly as before $\widetilde{\mathcal{A}}_n(f)=\widetilde{\mathcal{A}}_n(f,f,f)$ for short.
Define the local error
\begin{align}
\mathcal{\widetilde{L}}^n:=
&\frac12 \int_0^\tau\fe^{(t_n+s)\partial_x^3}
 \partial_x\left(\fe^{-(t_n+s)\partial_x^3}\left(v(t_n)
 +\frac12F_n\left(v(t_n),s\right)\right)\right)^2\,ds\label{local 2nd}\\
 &-\frac12\int_0^\tau\fe^{(t_n+s)\partial_x^3}
 \partial_x\left(\fe^{-(t_n+s)\partial_x^3}v(t_n+s)\right)^2ds\nonumber\\
&-\frac18\int_0^\tau\fe^{(t_n+s)\partial_x^3}
 \partial_x\left(\fe^{-(t_n+s)\partial_x^3}F_n\left(v(t_n),s\right)\right)^2ds
 +\frac1{18}\int_0^\tau \widetilde{\mathcal{A}}_n\left(v(t_n)\right)(s) ds,\nonumber
\end{align}
and the propagator
\begin{align}
\widetilde{\Phi}^n(v):=
&v+\frac12 \int_0^\tau\fe^{(t_n+s)\partial_x^3}
 \partial_x\left(\fe^{-(t_n+s)\partial_x^3}v\right)^2ds\label{stable 2nd}\\
&+\frac12\int_0^\tau\fe^{(t_n+s)\partial_x^3}
 \partial_x\left(\fe^{-(t_n+s)\partial_x^3}v\cdot \fe^{-(t_n+s)\partial_x^3}F_n(v,s)\right)ds+\frac1{18}\int_0^\tau \widetilde{\mathcal{A}}_n(v)(s)\,ds, \nonumber
\end{align}
then by taking the difference between (\ref{nest v}) and (\ref{lem:LRI-2or}), we find
\begin{align*}
v^{n+1}-v(t_{n+1})=\mathcal{\widetilde{L}}^n+
\widetilde{\Phi}^n\left(v^n\right)-\widetilde{\Phi}^n\left(v(t_n)\right).
\end{align*}

In the next two subsections, we shall consider the estimates on $\mathcal{\widetilde{L}}^n$ and $\widetilde{\Phi}^n\big(v^n\big)-\widetilde{\Phi}^n\big(v(t_n)\big)$, respectively. Since they are quite similar as given in the Sections \ref{local 1st}\&\ref{stable 1st}, we will focus on the differences.



First of all, we update the estimates of the defined new operator $\widetilde{\mathcal{A}}_n$ in (\ref{def:tilde-An}) as follows.
\begin{lemma}\label{lem:An-2}
The following inequalities hold:
\begin{itemize}
\item[(i)]
Let $\gamma_0>\frac12$, and $f_1,f_2,f_3\in H^{\gamma_0}$, then
$$
\left\|\int_0^\tau\widetilde{\mathcal{A}}_n(f_1,f_2,f_3)(t)\,dt\right\|_{H^{\gamma_0}}
\lesssim \tau\left\|f_1\right\|_{H^{\gamma_0}}\left\|f_2\right\|_{H^{\gamma_0}}\left\|f_3\right\|_{H^{\gamma_0}}.
$$

\item[(ii)]
Let $\gamma_0\ge 0$, and $f_1,f_2,f_3\in H^{\gamma_0+3}$, then
$$
\left\|\int_0^\tau\widetilde{\mathcal{A}}_n(f_1,f_2,f_3)(t)\,dt\right\|_{H^{\gamma_0}}
\lesssim \tau^3 \left\|f_1\right\|_{H^{\gamma_0+3}}\left\|f_2\right\|_{H^{\gamma_0+3}}\left\|f_3\right\|_{H^{\gamma_0+3}}.
$$
\end{itemize}
\end{lemma}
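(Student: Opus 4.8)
The plan is to reduce both bounds to a scalar estimate on the time integral of the phase and then run, almost verbatim, the Fourier-side argument of Lemma~\ref{lem:An}. Introduce
\[
\Theta_\tau(\alpha):=\int_0^\tau\Big(\fe^{-it\alpha}-1+\tfrac12 i\tau\alpha\,\fe^{-it\alpha}\Big)\,dt .
\]
Evaluating the three elementary integrals gives, with $\theta:=\tau\alpha$,
\[
\Theta_\tau(\alpha)=\tau\,g(\theta),\qquad
g(\theta):=\frac{1-\fe^{-i\theta}}{i\theta}-\frac12\big(1+\fe^{-i\theta}\big),
\]
with $g(0):=0$. A Taylor expansion at the origin yields $g(\theta)=\tfrac1{12}\theta^2+O(\theta^3)$, so $|g(\theta)|\lesssim|\theta|^2$ for $|\theta|\le1$, while $|g(\theta)|\le 2|\theta|^{-1}+1\le 3$ for $|\theta|\ge1$. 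Hence $|\Theta_\tau(\alpha)|\lesssim\tau\min\big((\tau|\alpha|)^2,1\big)$, and in particular $|\Theta_\tau(\alpha)|\lesssim\tau$ as well as $|\Theta_\tau(\alpha)|\lesssim\tau^3|\alpha|^2$. By \eqref{def:tilde-An}, integration in $t$ turns the bracket in the symbol into $\fe^{-it_n\alpha_4}\Theta_\tau(\alpha_4)$, so (replacing each $\widehat f_j$ by $|\widehat f_j|$, which is harmless) for $\xi\neq0$
\[
|\xi|^{\gamma_0}\Big|\mathcal F\Big(\int_0^\tau\widetilde{\mathcal A}_n(f_1,f_2,f_3)(t)\,dt\Big)(\xi)\Big|
\le |\xi|^{\gamma_0-1}\int_{\xi=\xi_1+\xi_2+\xi_3}|\Theta_\tau(\alpha_4)|\,\widehat f_1(\xi_1)\widehat f_2(\xi_2)\widehat f_3(\xi_3)\,(d\xi_1)(d\xi_2),
\]
and only $|\xi|\ge1$ needs to be treated.

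For part~(i) I would insert $|\Theta_\tau(\alpha_4)|\lesssim\tau$ and use $|\xi|^{\gamma_0-1}\le|\xi|^{\gamma_0}\lesssim\langle\xi_1\rangle^{\gamma_0}+\langle\xi_2\rangle^{\gamma_0}+\langle\xi_3\rangle^{\gamma_0}$ (because $|\xi|\le 3\max_j|\xi_j|$); after symmetrizing the weight over the three factors the right-hand side is $\lesssim\tau\,\mathcal F\big(|\nabla|^{\gamma_0}(f_1f_2f_3)\big)(\xi)$, so Plancherel together with the trilinear form of Lemma~\ref{lem:kato-Ponce}(i) (which requires $\gamma_0>\tfrac12$) gives the stated bound. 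This is the proof of Lemma~\ref{lem:An}(i) with one extra factor $\tau$ produced by the $t$-integration.

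For part~(ii) I would use $|\Theta_\tau(\alpha_4)|\lesssim\tau^3|\alpha_4|^2$ and then the algebraic identity \eqref{formu-a4}, which gives $|\alpha_4|\lesssim|\xi|\big(|\xi_1\xi_2|+|\xi_1\xi_3|+|\xi_2\xi_3|\big)+|\xi_1\xi_2\xi_3|$ and hence
\[
|\xi|^{-1}|\alpha_4|^2\lesssim|\xi|\big(|\xi_1\xi_2|+|\xi_1\xi_3|+|\xi_2\xi_3|\big)^2+|\xi|^{-1}|\xi_1\xi_2\xi_3|^2 .
\]
By symmetry assume $|\xi_1|\ge|\xi_2|\ge|\xi_3|$; then $|\xi|\le 3|\xi_1|$, and since $\gamma_0\ge0$ a short case check on the two summands (using $|\xi|^{\gamma_0-1}\le|\xi_1|^{\gamma_0-1}$ when $\gamma_0\ge1$ and $|\xi|^{\gamma_0-1}\le1$ when $0\le\gamma_0<1$) shows $|\xi|^{\gamma_0-1}|\alpha_4|^2\lesssim\langle\xi_1\rangle^{\gamma_0+3}\langle\xi_2\rangle^{2}\langle\xi_3\rangle^{2}$. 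After symmetrizing over which of $\xi_1,\xi_2,\xi_3$ is largest, the displayed Fourier bound is then controlled by $\tau^3$ times a sum of at most six terms of the form $\mathcal F\big(|\nabla|^{\gamma_0+3}g_1\cdot|\nabla|^2 g_2\cdot|\nabla|^2 g_3\big)(\xi)$ with $\{g_1,g_2,g_3\}=\{f_1,f_2,f_3\}$; Plancherel, H\"older with exponents $(2,\infty,\infty)$, and the embedding $H^{\gamma_0+3}\hookrightarrow W^{2,\infty}$ (valid for all $\gamma_0\ge0$, since $\gamma_0+3>\tfrac52$) — or, equivalently, Lemma~\ref{lem:kato-Ponce} — then give $\big\|\int_0^\tau\widetilde{\mathcal A}_n(f_1,f_2,f_3)\,dt\big\|_{H^{\gamma_0}}\lesssim\tau^3\|f_1\|_{H^{\gamma_0+3}}\|f_2\|_{H^{\gamma_0+3}}\|f_3\|_{H^{\gamma_0+3}}$.

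The genuinely delicate point is this last symbol estimate: the operator $\widetilde{\mathcal A}_n$ carries a factor $\xi^{-1}$, and one must use the structure of $\alpha_4$ in \eqref{formu-a4} to peel off a compensating factor $\xi$ so that $|\xi|^{-1}|\alpha_4|^2$ loads at most three derivatives in total, with no more than $\gamma_0+3$ on any single input frequency. The bookkeeping $|\xi|\lesssim\max_j|\xi_j|$ together with the hypothesis $\gamma_0\ge0$ are exactly what make this distribution fit inside $H^{\gamma_0+3}$; without \eqref{formu-a4} the crude bound $|\alpha_4|\lesssim\prod_j\langle\xi_j\rangle^2$ would cost four derivatives on the highest frequency and fail for small $\gamma_0$. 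Everything else — the elementary evaluation of $\Theta_\tau$, the expansion of $g$, and the Kato--Ponce applications — is routine and parallels Lemma~\ref{lem:An}.
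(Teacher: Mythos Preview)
Your argument is correct and follows essentially the same route as the paper. For (ii) both you and the paper establish $\big|\int_0^\tau(\fe^{-it\alpha_4}-1+\tfrac12 i\tau\alpha_4\fe^{-it\alpha_4})\,dt\big|\lesssim\tau^3|\alpha_4|^2$ (the paper by rewriting the integrand as $\fe^{-it\alpha_4}-1+it\alpha_4+\tfrac12 i\tau\alpha_4(\fe^{-it\alpha_4}-1)$, you via the closed form $g(\theta)$) and then distribute $|\xi|^{-1}\alpha_4^2$ using \eqref{formu-a4} under the ordering $|\xi_1|\ge|\xi_2|\ge|\xi_3|$; the only cosmetic difference is that the paper keeps the $\langle\xi\rangle^{\gamma_0}$ weight outside and lands on the two products $|\nabla|^3 f_1\cdot|\nabla|^2 f_2\cdot f_3$ and $|\nabla|^2 f_1\cdot|\nabla|^2 f_2\cdot|\nabla|^2 f_3$ before invoking Lemma~\ref{lem:kato-Ponce}(ii), whereas you absorb $|\xi|^{\gamma_0}$ into the highest frequency first and finish with H\"older plus the embedding $H^{\gamma_0+3}\hookrightarrow W^{2,\infty}$. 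For (i) the paper takes a slightly more indirect path --- it writes $\int_0^\tau\widetilde{\mathcal A}_n\,dt=\int_0^\tau\mathcal A_n\,dt$ plus two explicit $\partial_x^{-1}$ terms and then reuses Lemma~\ref{lem:An}(i) --- while you go directly through $|\Theta_\tau|\lesssim\tau$; both yield the same bound with the same tools. One small notational point: since your symbol bound is in terms of $\langle\xi_j\rangle$ rather than $|\xi_j|$, the resulting physical-space product should strictly be $J^{\gamma_0+3}g_1\cdot J^2 g_2\cdot J^2 g_3$ rather than with $|\nabla|$, but this does not affect the estimate.
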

\begin{proof}
(i) Note that when $\xi\ne 0$, by integrating in time we get
\begin{align*}
 &\int_0^\tau \mathcal F\big(\widetilde{\mathcal{A}}_n(f_1,f_2,f_3)\big)(t,\xi)\,dt
  -\int_0^\tau \mathcal F\big(\mathcal{A}_n(f_1,f_2,f_3)\big)(t,\xi)\,dt\\
  =&-\frac\tau2(i\xi)^{-1}\int_{\xi=\xi_1+\xi_2+\xi_3} \left(\fe^{-it_{n+1}\alpha_4}-\fe^{-it_{n}\alpha_4}\right) \widehat{f_1}(\xi_1)\widehat{f_2}(\xi_2)\widehat{f_3}(\xi_3)\,(d\xi_1)(d\xi_2).
 \end{align*}
Hence, we have
\begin{align*}
 \int_0^\tau\widetilde{\mathcal{A}}_n(f_1,f_2,f_3)(s)\,ds
  =&\int_0^\tau\mathcal{A}_n(f_1,f_2,f_3)(s)\,ds
  +\frac\tau2\fe^{t_n\partial_x^3}\partial_x^{-1}
  \left(\fe^{-t_n\partial_x^3}f_1\cdots\fe^{-t_n\partial_x^3}f_3\right)\\
  &-\frac{\tau}{2}\fe^{t_{n+1}\partial_x^3}\partial_x^{-1}
  \left(\fe^{-t_{n+1}\partial_x^3}f_1\cdots\fe^{-t_{n+1}\partial_x^3}f_3\right).
 \end{align*}
 For the second and the third terms, by Lemma \ref{lem:kato-Ponce} (i), we have that for any $\gamma_0>\frac12$,
 $$
 \left\|\fe^{t_n\partial_x^3}\partial_x^{-1}\left(\fe^{-t_n\partial_x^3}f_1
 \cdots\fe^{-t_n\partial_x^3}f_3\right)\right\|_{H^{\gamma_0}}
 \lesssim  \left\|f_1\right\|_{H^{\gamma_0}}\left\|f_2\right\|_{H^{\gamma_0}}\left\|f_3\right\|_{H^{\gamma_0}},
 $$
and
$$
 \left\|\fe^{t_{n+1}\partial_x^3}\partial_x^{-1}\left(\fe^{-t_{n+1}\partial_x^3}
 f_1\cdots\fe^{-t_{n+1}\partial_x^3}f_3\right)\right\|_{H^{\gamma_0}}
 \lesssim  \left\|f_1\right\|_{H^{\gamma_0}}\left\|f_2\right\|_{H^{\gamma_0}}\left\|f_3\right\|_{H^{\gamma_0}}.
 $$
Hence, these two estimates combining with Lemma \ref{lem:An} (i) give the desired result.

(ii)
We only consider the case when $|\xi|\ge 1$. Moreover, we may assume that $\widehat f_j$ for $j=1,2,3$ are positive, otherwise one may replace them by $|\widehat f_j|$.
Noting that
\begin{align*}
\int_0^\tau  \left(\fe^{-it\alpha_4}-1+\frac12i\tau\alpha_4\fe^{-it\alpha_4}\right)dt
=\int_0^\tau  \left(\fe^{-it\alpha_4}-1+it\alpha_4+\frac12i\tau\alpha_4
\left(\fe^{-it\alpha_4}-1\right)\right)dt,
\end{align*}
 we have
$$
\left|\int_0^\tau  \left(\fe^{-it\alpha_4}-1+\frac12i\tau\alpha_4\fe^{-it\alpha_4}\right)\,dt\right|
\lesssim \tau^3\alpha_4^2.
$$
Therefore, by using the estimate above, we obtain
\begin{align*}
\left| \int_0^\tau  \mathcal F\left(\widetilde{\mathcal{A}}_n(f_1,f_2,f_3)\right)(t,\xi)\,dt\right|
\lesssim \tau^3 \int_{\xi=\xi_1+\xi_2+\xi_3} |\xi|^{-1}\alpha_4^2\>\widehat f_1(\xi_1)\widehat f_2(\xi_2)\widehat f_3(\xi_3)\,(d\xi_1)(d\xi_2).
\end{align*}
By the symmetry, we may assume that $|\xi_1|\ge |\xi_2|\ge |\xi_3|$, then $|\xi|\le 3|\xi_1|$. Thus by \eqref{formu-a4}, we have
$$
|\xi|^{-1}\alpha_4^2\lesssim |\xi_1|^3|\xi_2|^2+|\xi_1|^2|\xi_2|^2|\xi_3|^2.
$$
By inverse Fourier transform, we get
\begin{align*}
\left| \int_0^\tau  \mathcal F\left(\widetilde{\mathcal{A}}_n(f_1,f_2,f_3)\right)(t,\xi)\,dt\right|
\lesssim \tau^3 \mathcal F\left(|\nabla|^3f_1\cdot |\nabla|^2f_2\cdot f_3+|\nabla|^2f_1\cdot |\nabla|^2f_2\cdot |\nabla|^2f_3\right)(\xi).
\end{align*}
Therefore, by Plancherel's identity and Lemma \ref{lem:kato-Ponce} (ii), we obtain that for any $\gamma_0\ge 0$,
\begin{align*}
\left\|\int_0^\tau\widetilde{\mathcal{A}}_n(f_1,f_2,f_3)(t)\,dt\right\|_{H^{\gamma_0}}
=& \left\| \int_0^\tau \langle\xi\rangle^{\gamma_0} \mathcal F\left(\widetilde{\mathcal{A}}_n(f_1,f_2,f_3)\right)(t,\xi)\,dt\right\|_{L^2}\\
\lesssim &
\tau^3 \left[\left\||\nabla|^3f_1\cdot |\nabla|^2f_2\cdot f_3\right\|_{H^{\gamma_0}}+\left\||\nabla|^2f_1\cdot |\nabla|^2f_2\cdot |\nabla|^2f_3\right\|_{H^{\gamma_0}}\right]\\
\lesssim &
\tau^3 \left\|f_1\right\|_{H^{\gamma_0+3}}\left\|f_2\right\|_{H^{\gamma_0+3}}
\left\|f_3\right\|_{H^{\gamma_0+3}}.
\end{align*}
This finishes the proof of the lemma.
\end{proof}

\subsection{Local error}
For the local error $\mathcal{\widetilde{L}}^n$ (\ref{local  2nd}), we have the following estimate.
\begin{lemma}\label{lem:local-error-2or} Let $\gamma \ge 0$, then
$$
\left\|\mathcal{\widetilde{L}}^n\right\|_{H^\gamma}\le C\tau^3,\quad n=0,\ldots,\frac{T}{\tau}-1,
$$
 where the constant  $C$ depends only on $\|u\|_{L^\infty_tH^{\gamma+3}_x}$.
\end{lemma}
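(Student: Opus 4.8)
The plan is to mimic the three-part decomposition used in the proof of Lemma \ref{lem:local-error}, but to exploit the extra regularity: since we now allow $\gamma\ge0$ and have three spatial derivatives to spare, I would bypass the derivative-economical Lemma \ref{lem:bi-est} (which in any case requires $\gamma>\tfrac12$) and estimate every piece by brute force, losing derivatives freely. Write $\mathcal{\widetilde{L}}^n=\mathcal{\widetilde{L}}^n_1+\mathcal{\widetilde{L}}^n_2+\mathcal{\widetilde{L}}^n_3$, where $\mathcal{\widetilde{L}}^n_1$ collects the first two lines of \eqref{local 2nd} (the error of replacing $v(t_n+s)$ by $v(t_n)+\tfrac12F_n(v(t_n),s)$), $\mathcal{\widetilde{L}}^n_2=-\tfrac18\int_0^\tau\fe^{(t_n+s)\partial_x^3}\partial_x\big(\fe^{-(t_n+s)\partial_x^3}F_n(v(t_n),s)\big)^2\,ds$, and $\mathcal{\widetilde{L}}^n_3=\tfrac1{18}\int_0^\tau\widetilde{\mathcal{A}}_n(v(t_n))(s)\,ds$. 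Since $v\mapsto\fe^{t\partial_x^3}v$ preserves every $H^\sigma$-norm, it is enough to bound each piece by $C\tau^3$ with $C$ depending only on $\|v\|_{L^\infty_tH^{\gamma+3}_x}=\|u\|_{L^\infty_tH^{\gamma+3}_x}$.

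The term $\mathcal{\widetilde{L}}^n_3$ — the only piece genuinely affected by passing from the first-order to the second-order scheme — is dispatched at once by Lemma \ref{lem:An-2}(ii) with $\gamma_0=\gamma\ge0$, which gives $\|\mathcal{\widetilde{L}}^n_3\|_{H^\gamma}\lesssim\tau^3\|v(t_n)\|_{H^{\gamma+3}}^3$. For $\mathcal{\widetilde{L}}^n_1$ (which is literally the same expression as $\mathcal{L}^n_1$ in Section \ref{local 1st}) I would rewrite it, exactly as there, as $-\tfrac12\int_0^\tau\fe^{(t_n+s)\partial_x^3}\partial_x\big(\fe^{-(t_n+s)\partial_x^3}d_s\cdot\fe^{-(t_n+s)\partial_x^3}\sigma_s\big)\,ds$ with $d_s:=v(t_n+s)-v(t_n)-\tfrac12F_n(v(t_n),s)$ and $\sigma_s:=v(t_n+s)+v(t_n)+\tfrac12F_n(v(t_n),s)$. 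Iterating the Duhamel formula \eqref{vt eq} twice shows that $d_s$ equals a double time-integral over $0\le\rho\le t\le s$ of a quantity carrying two $\partial_x$'s; estimating it directly, using the isometry of $\fe^{\pm t\partial_x^3}$ and the Kato--Ponce inequality (Lemma \ref{lem:kato-Ponce}) on each $\partial_x$, yields $\|d_s\|_{H^{\gamma+1}}\lesssim s^2\,\|v\|_{L^\infty_tH^{\gamma+3}_x}^3$ for $0\le s\le\tau$, while from \eqref{def-Fn} one has the crude bound $\|F_n(v(t_n),s)\|_{H^{\gamma_1}}\lesssim\tau\|v\|_{L^\infty_tH^{\gamma+3}_x}^2$ for a fixed $\gamma_1\in(\tfrac12,1]$, hence $\|\sigma_s\|_{H^{\gamma_1}}\lesssim\|v\|_{L^\infty_tH^{\gamma+3}_x}+\|v\|_{L^\infty_tH^{\gamma+3}_x}^2$. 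Consequently
\[
\|\mathcal{\widetilde{L}}^n_1\|_{H^\gamma}\le\tfrac12\int_0^\tau\big\|d_s\,\sigma_s\big\|_{H^{\gamma+1}}\,ds\lesssim\int_0^\tau\|d_s\|_{H^{\gamma+1}}\,\|\sigma_s\|_{H^{\gamma_1}}\,ds\lesssim\tau^3\big(\|v\|_{L^\infty_tH^{\gamma+3}_x}^4+\|v\|_{L^\infty_tH^{\gamma+3}_x}^5\big),
\]
using $\|\partial_x(fg)\|_{H^\gamma}\lesssim\|f\|_{H^{\gamma+1}}\|g\|_{H^{\gamma_1}}+\|f\|_{H^{\gamma_1}}\|g\|_{H^{\gamma+1}}$ from Lemma \ref{lem:kato-Ponce}(i) (both factors are bounded in high norms, so the high index may sit on either one).

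For $\mathcal{\widetilde{L}}^n_2$ I would argue identically: from \eqref{def-Fn} one has $\|F_n(v(t_n),s)\|_{H^\sigma}\lesssim s\,\|v\|_{H^{\sigma+1}}\|v\|_{H^{\gamma_1}}$, so by Lemma \ref{lem:kato-Ponce} again $\|\mathcal{\widetilde{L}}^n_2\|_{H^\gamma}\lesssim\int_0^\tau\|F_n(v(t_n),s)\|_{H^{\gamma+1}}\|F_n(v(t_n),s)\|_{H^{\gamma_1}}\,ds\lesssim\big(\int_0^\tau s^2\,ds\big)\|v\|_{L^\infty_tH^{\gamma+2}_x}^4\lesssim\tau^3\|v\|_{L^\infty_tH^{\gamma+3}_x}^4$. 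Adding the three bounds and recalling $\|v\|_{L^\infty_tH^{\gamma+3}_x}=\|u\|_{L^\infty_tH^{\gamma+3}_x}$ completes the proof. The only delicate point is the bookkeeping inside $\mathcal{\widetilde{L}}^n_1$: one must check that expanding $d_s$ by the double Duhamel iteration and then absorbing the outer $\partial_x$ costs exactly three spatial derivatives in total — so that $v\in H^{\gamma+3}$ is precisely what is needed — and that the nested double time-integral supplies the factor $s^2$, whose integration over $[0,\tau]$ furnishes the extra power of $\tau$ that brings the local error from the $O(\tau^2)$ of the first-order scheme down to $O(\tau^3)$. No new analytic ingredient beyond the $\widetilde{\mathcal{A}}_n$-estimate of Lemma \ref{lem:An-2}(ii) is required.
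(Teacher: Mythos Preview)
Your proposal is correct and follows essentially the same approach as the paper: the same three-part decomposition $\mathcal{L}^n_1+\mathcal{L}^n_2+\widetilde{\mathcal{L}}^n_3$, the same use of Lemma~\ref{lem:An-2}(ii) for the genuinely new piece, and the same strategy of abandoning Lemma~\ref{lem:bi-est} in favor of direct Kato--Ponce estimates that lose one outer derivative on the $\partial_x$ and exploit the $H^{\gamma+3}$ budget. The only cosmetic difference is that the paper places both factors of the product in $H^{\gamma+1}$ (legitimate since $\gamma+1>\tfrac12$ for $\gamma\ge0$) whereas you put one factor in a low auxiliary $H^{\gamma_1}$; both routes yield the same $O(\tau^3)$ bound with the same dependence on $\|u\|_{L^\infty_tH^{\gamma+3}_x}$.
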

\begin{proof}
We split $\mathcal{\widetilde{L}}^n$ into the following three parts as
$$
\mathcal{\widetilde{L}}^n=\mathcal{L}^n_1+\mathcal{L}^n_2+\mathcal{\widetilde{L}}^n_3,
$$
where $\mathcal{L}^n_1, \mathcal{L}^n_2$ are defined in \eqref{L1},  \eqref{L2} respectively, and
\begin{align*}
\mathcal{\widetilde{L}}^n_3=
&\frac1{18}\int_0^\tau \widetilde{\mathcal{A}}_n\left(v(t_n)\right)(s) \,ds.
\end{align*}

For $\mathcal{L}^n_1$, we have
\begin{align*}
\mathcal{L}^n_1=
&\frac12 \int_0^\tau\fe^{(t_n+s)\partial_x^3}
 \partial_x\left[\fe^{-(t_n+s)\partial_x^3}\left(v(t_n)+\frac12F_n\left(v(t_n),s\right)\right)\right]^2ds\\
 &-\frac12\int_0^\tau\fe^{(t_n+s)\partial_x^3}
 \partial_x\left(\fe^{-(t_n+s)\partial_x^3}v(t_n+s)\right)^2ds\\
 =&\frac12 \int_0^\tau\fe^{(t_n+s)\partial_x^3}
 \partial_x\Bigg[\fe^{-(t_n+s)\partial_x^3}\left(v(t_n)+\frac12F_n\left(v(t_n),s\right)-v(t_n+s)\right)\\
 &\qquad\qquad\qquad\qquad \cdot \fe^{-(t_n+s)\partial_x^3}\left(v(t_n)+\frac12F_n\left(v(t_n),s\right)+v(t_n+s)\right)\Bigg]ds.
\end{align*}
Hence, by Lemma \ref{lem:kato-Ponce} (i), we have
\begin{align*}
&\left\|\mathcal{L}^n_1\right\|_{H^{\gamma}}\\
\lesssim&
\tau\sup\limits_{s\in [0,\tau]}\left(\left\|v(t_n)+\frac12 F_n\left(v(t_n),\tau\right)-v(t_{n}+s)\right\|_{H^{\gamma+1}}\left\|v(t_n)+\frac12 F_n\left(v(t_n),\tau\right)+v(t_{n}+s)\right\|_{H^{\gamma+1}}\right).
\end{align*}
Similarly as the treatment in the proof of Lemma \ref{lem:local-error}, we get that for any $s\le \tau$,
\begin{align*}
\left\|v(t_n+s)-v(t_n)-\frac12 F_n\left(v(t_n),\tau\right)\right\|_{H^{\gamma+1}}
\lesssim& \tau^2 \|v\|_{L^\infty_t H^{\gamma+3}_x}^3,\\
\left\|v(t_n+s)+v(t_n)+\frac12 F_n\left(v(t_n),\tau\right)\right\|_{H^{\gamma+1}}
\lesssim & \|v\|_{L^\infty_t H^{\gamma+2}_x}+\|v\|_{L^\infty_t H^{\gamma+2}_x}^2.
\end{align*}
Hence, we obtain that
\begin{align}
\left\|\mathcal{L}^n_1\right\|_{H^{\gamma}}
\lesssim \tau^3\left( \left\|v\right\|_{L^\infty_t H^{\gamma+3}_x}^4+\left\|v\right\|_{L^\infty_t H^{\gamma+3}_x}^5\right).\label{est:Ln-1-r}
\end{align}

For $\mathcal{L}^n_2$, we recall that
\begin{align*}
\mathcal{L}^n_2=
&-\frac12\int_0^\tau\fe^{(t_n+s)\partial_x^3}
 \partial_x\left(\fe^{-(t_n+s)\partial_x^3}F_n\left(v(t_n),s\right)\right)^2ds.
\end{align*}
Hence by Lemma \ref{lem:kato-Ponce}, we have
\begin{align*}
\left\|\mathcal{L}^n_2\right\|_{H^{\gamma}}
\lesssim
\tau \sup\limits_{s\in [0,\tau]}\left\|F_n\left(v(t_n),s\right)\right\|_{H^{\gamma+1}}^2.
\end{align*}
Using \eqref{est:Fn}, we get
\begin{align}
\left\|\mathcal{L}^n_2\right\|_{H^{\gamma}}
\lesssim \tau^3 \|v\|_{L^\infty_tH^{\gamma+2}_x}^4.\label{est:Ln-2-r}
\end{align}

For $\mathcal{\widetilde{L}}_3^n$, by Lemma \ref{lem:An-2} (ii), we have
\begin{align}
\left\|\mathcal{\widetilde{L}}^n_3\right\|_{H^{\gamma}}
\lesssim \tau^3 \|v\|_{L^\infty_tH^{\gamma+3}_x}^3.\label{est:Ln-3-r}
\end{align}

Together with \eqref{est:Ln-1-r},  \eqref{est:Ln-2-r}  and \eqref{est:Ln-3-r}, the lemma is proved.
\end{proof}

\subsection{Stability}
The main result in this subsection is devoted for the propagator $\widetilde{\Phi}^n$ (\ref{stable 2nd}). To this purpose, we need the following $H^{\gamma+2}$-estimate on $v^n$.
\begin{lemma}\label{lem:apri} Let $\gamma \ge 0$, then for $n=0,\ldots,\frac{T}{\tau}-1$,
$$
\left\|{ v}^n\right\|_{H^{\gamma+2}}\le C,
$$
 where the constant  $C$ depends only on $\|u\|_{L^\infty_tH^{\gamma+3}_x}$.
 \end{lemma}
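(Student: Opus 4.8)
The plan is to bound $v^n$ in $H^{\gamma+2}$ by comparing it, \emph{at the same regularity $\gamma+2$}, with the exact twisted solution $v(t_n)$. This is legitimate because $u_0\in H^{\gamma+3}\subset H^{\gamma+2}$ and the change of unknown \eqref{twist} is an $H^s$-isometry, so $\|v(t_n)\|_{H^{\gamma+2}}=\|u(t_n)\|_{H^{\gamma+2}}\le\|u\|_{L^\infty_tH^{\gamma+3}_x}$; hence it suffices to bound $e_n:=v^n-v(t_n)$ in $H^{\gamma+2}$. I would use the decomposition $e_{n+1}=\widetilde{\mathcal L}^n+\widetilde\Phi^n(v^n)-\widetilde\Phi^n(v(t_n))$ from Section \ref{sec:2ord-proof} together with (a) a local-error estimate $\|\widetilde{\mathcal L}^n\|_{H^{\gamma+2}}\le C\tau^2$ and (b) a stability estimate for $\widetilde\Phi^n$ in $H^{\gamma+2}$, and then run the same Bootstrap / discrete Gronwall argument as in the proof of Theorem \ref{thm:convergence}.

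For (a) I would re-run the proof of Lemma \ref{lem:local-error-2or} with $\gamma$ replaced by $\gamma+2$, being careful never to pay a spatial derivative beyond what $H^{\gamma+3}$ affords. Thus $\mathcal L_1^n$ and $\mathcal L_2^n$ must be estimated through Lemma \ref{lem:bi-est}(ii) (whose integration-by-parts costs no derivative), which together with \eqref{est:v-v-F}, \eqref{est:Fn} at level $\gamma+2$ gives $\|\mathcal L_1^n\|_{H^{\gamma+2}}+\|\mathcal L_2^n\|_{H^{\gamma+2}}\lesssim\tau^2(\cdots)$. For $\widetilde{\mathcal L}_3^n=\tfrac1{18}\int_0^\tau\widetilde{\mathcal A}_n(v(t_n))(s)\,ds$ one cannot use the three-derivative estimate Lemma \ref{lem:An-2}(ii); instead I would split $\int_0^\tau\widetilde{\mathcal A}_n=\int_0^\tau\mathcal A_n+(\text{boundary terms})$ as in the proof of Lemma \ref{lem:An-2}, bound the $\mathcal A_n$-part by Lemma \ref{lem:An}(ii) (one derivative, $O(\tau^2)$ after integrating), and bound the boundary part $\propto\tau\big(\fe^{t_n\partial_x^3}\partial_x^{-1}(\fe^{-t_n\partial_x^3}v(t_n))^3-\fe^{t_{n+1}\partial_x^3}\partial_x^{-1}(\fe^{-t_{n+1}\partial_x^3}v(t_n))^3\big)$ by a direct multiplier computation, using the telescoping identity $\fe^{-it_n\alpha_4}-\fe^{-it_{n+1}\alpha_4}=\fe^{-it_n\alpha_4}(1-\fe^{-i\tau\alpha_4})$, the bound $|1-\fe^{-i\tau\alpha_4}|\le\tau|\alpha_4|$, and $|\alpha_4|\lesssim|\xi_1|^2|\xi_2|$ so that the factor $(i\xi)^{-1}$ absorbs one power and the remaining derivatives distribute inside $H^{\gamma+3}$ (the lower-order factors being put in $L^\infty$ via Sobolev). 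This yields $\|\widetilde{\mathcal L}_3^n\|_{H^{\gamma+2}}\lesssim\tau^2\|v(t_n)\|_{H^{\gamma+3}}^3$, hence $\|\widetilde{\mathcal L}^n\|_{H^{\gamma+2}}\le C\tau^2$ with $C=C(\|u\|_{L^\infty_tH^{\gamma+3}_x})$.

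For (b) I would repeat the stability proof of Lemma \ref{lem:stability} (adapted to $\widetilde\Phi^n$) with $\gamma$ replaced by $\gamma+2$. Writing $\widetilde\Phi^n(v^n)-\widetilde\Phi^n(v(t_n))=e_n+\Phi_1^n+\Phi_2^n+\Phi_3^n$ as before, the only place extra regularity is needed is the commutator term $\langle J^{\gamma+2}\partial_x(\fe^{-(t_n+s)\partial_x^3}e_n\cdot\fe^{-(t_n+s)\partial_x^3}v(t_n)),J^{\gamma+2}\fe^{-(t_n+s)\partial_x^3}e_n\rangle$, handled by Lemma \ref{lm3.5}(i) which needs $v(t_n)\in H^{\gamma+3}$ — available. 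The purely quadratic-in-$e_n$ piece is bounded crudely by Cauchy--Schwarz and Lemma \ref{lem:bi-est}(i) (no derivative loss, factor $\sqrt\tau$), and the $\widetilde{\mathcal A}_n$-difference $\Phi_3^n$ by Lemma \ref{lem:An-2}(i) together with $\|v^n\|_{H^{\gamma+2}}\le\|v(t_n)\|_{H^{\gamma+2}}+\|e_n\|_{H^{\gamma+2}}$. One thus obtains, exactly as in Lemma \ref{lem:stability},
\begin{equation*}
\big\|\widetilde\Phi^n(v^n)-\widetilde\Phi^n(v(t_n))\big\|_{H^{\gamma+2}}\le\Big(1+C\tau+C\sqrt\tau\,\|e_n\|_{H^{\gamma+2}}+C\tau\,\|e_n\|_{H^{\gamma+2}}^4\Big)\|e_n\|_{H^{\gamma+2}}.
\end{equation*}
Combining (a) and (b), the Bootstrap gives $\|e_n\|_{H^{\gamma+2}}\le C_1\tau^2\sum_{j=0}^n(1+2C_2\tau)^j\lesssim\tau$ for $\tau\le\tau_0$ small, whence $\|v^n\|_{H^{\gamma+2}}\le\|v(t_n)\|_{H^{\gamma+2}}+C\tau\le C$.

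I expect the main obstacle to be the local-error bound (a): one must squeeze an $O(\tau^2)$ estimate out of $\widetilde{\mathcal L}^n$ in $H^{\gamma+2}$ while only spending the single extra derivative provided by $H^{\gamma+3}$, which forces the careful use of integration-by-parts for the bilinear terms and the hands-on multiplier treatment of the boundary terms inside $\widetilde{\mathcal A}_n$ (the three-derivative estimate Lemma \ref{lem:An-2}(ii) being too lossy). Once (a) is in hand, the $\sqrt\tau\|e_n\|_{H^{\gamma+2}}^3$ term appearing in (b) is harmless — since $\|e_n\|_{H^{\gamma+2}}=O(\tau)$ — and the closure of the Bootstrap is identical to the first-order case.
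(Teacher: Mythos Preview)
Your proposal is correct and follows essentially the same approach as the paper. The paper's proof is the single sentence ``by using the same manner in the proof of Theorem~\ref{thm:convergence} \dots\ we have $\|v^n-v(t_n)\|_{H^{\gamma+2}}\le C\tau$,'' i.e.\ rerun the \emph{first-order} convergence argument (local error $O(\tau^2)$, stability, bootstrap) at regularity level $\gamma+2$, using only the single extra derivative afforded by $H^{\gamma+3}$; you have simply spelled out the details, including the necessary replacement of Lemma~\ref{lem:An-2}(ii) by the one-derivative estimate obtained by splitting $\widetilde{\mathcal A}_n$ into $\mathcal A_n$ plus boundary terms.
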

 \begin{proof}
 By using the same manner in the proof of Theorem \ref{thm:convergence} (the details are omitted here), we have that
 $$
 \left\|{ v}^n-v(t_n)\right\|_{H^{\gamma+2}}\le C\tau.
 $$
 Then the estimate is followed from the boundedness of $\left\|v(t_n)\right\|_{H^{\gamma+2}}$.
 \end{proof}
 Now we state our main result in this subsection.
\begin{lemma}\label{lem:stability-2or} Let $\gamma \ge 0$, then for $n=0,\ldots,\frac{T}{\tau}-1$,
$$
\left\|\widetilde{\Phi}^n({ v}^n)-\widetilde{\Phi}^n(v(t_n))\right\|_{H^\gamma}\le (1+ C\tau)\left\|{ v}^n-v(t_n)\right\|_{H^\gamma},
$$
 where the constant  $C$ depends only on $\|u\|_{L^\infty_tH^{\gamma+1}_x}$.
\end{lemma}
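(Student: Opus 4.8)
The plan is to follow the exact same scheme as in the proof of Lemma~\ref{lem:stability} for the first-order propagator $\Phi^n$, since the second-order propagator $\widetilde{\Phi}^n$ differs from $\Phi^n$ only by replacing $\mathcal{A}_n$ with $\widetilde{\mathcal{A}}_n$. First I would write
\begin{align*}
\widetilde{\Phi}^n(v^n)-\widetilde{\Phi}^n(v(t_n))=e_n+\Phi^n_1+\Phi^n_2+\widetilde{\Phi}^n_3,
\end{align*}
where $e_n=v^n-v(t_n)$, the terms $\Phi^n_1,\Phi^n_2$ are literally the same as in Lemma~\ref{lem:stability}, and
$\widetilde{\Phi}^n_3=\frac1{18}\int_0^\tau\big[\widetilde{\mathcal{A}}_n(v^n)(s)-\widetilde{\mathcal{A}}_n(v(t_n))(s)\big]\,ds$. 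Then I would estimate $\|\widetilde{\Phi}^n(v^n)-\widetilde{\Phi}^n(v(t_n))\|_{H^\gamma}^2$ by expanding the square and bounding $\langle J^\gamma\Phi^n_1,J^\gamma e_n\rangle$, $\|\Phi^n_1\|_{H^\gamma}$, $\|\Phi^n_2\|_{H^\gamma}$ and $\|\widetilde{\Phi}^n_3\|_{H^\gamma}$ separately.

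The key point is that here $\gamma\ge 0$ (rather than $\gamma>\frac12$) but we have one extra derivative of regularity available on the exact solution, namely $v(t_n)\in H^{\gamma+1}_x$ uniformly, and by Lemma~\ref{lem:apri} also $\|v^n\|_{H^{\gamma+2}}\le C$; moreover the claimed bound $(1+C\tau)\|e_n\|_{H^\gamma}$ has \emph{no} $\sqrt\tau\|e_n\|^3$ or $\tau\|e_n\|^5$ terms, so all nonlinear-in-$e_n$ contributions must now be absorbed using the a priori bound $\|e_n\|_{H^\gamma}\lesssim\tau$ into the $C\tau\|e_n\|_{H^\gamma}$ term. Concretely: for $\Phi^n_1$ I would still split it as $\frac12\int_0^\tau\fe^{(t_n+s)\partial_x^3}\partial_x(\fe^{-(t_n+s)\partial_x^3}e_n)^2\,ds$ plus the cross term with $v(t_n)$; the cross term is handled by Lemma~\ref{lm3.5}(i) with $\gamma_1>\frac12$ (legitimate since $v(t_n)\in H^{\gamma+1}\cap H^{\gamma_1+1}$), giving $\tau\|v(t_n)\|_{H^{\gamma+1}}\|e_n\|_{H^\gamma}^2$, and for the purely quadratic piece I would use Lemma~\ref{lem:bi-est}(ii) with $f=g=e_n$ (using one loss of derivative, i.e. $\|e_n\|_{H^{\gamma-1}}\le\|e_n\|_{H^\gamma}$ — actually here one should rather exploit that the a priori bound gives $\|e_n\|$ small and use $\|e_n\|^2\lesssim\tau\|e_n\|$), yielding a $\tau\|e_n\|_{H^\gamma}^2$-type bound after invoking $\|e_n\|_{H^\gamma}\lesssim\tau\lesssim 1$. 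The terms $\Phi^n_2$ are estimated exactly as in \eqref{est:Phi-2}, and the resulting $\tau\|e_n\|^3$ contribution is again converted to $\tau\|e_n\|^2\lesssim\tau\cdot\tau\|e_n\|$ by the a priori bound. For $\widetilde{\Phi}^n_3$ I would use the trilinear structure of $\widetilde{\mathcal{A}}_n$ and Lemma~\ref{lem:An-2}(i): writing $\widetilde{\mathcal{A}}_n(v^n)-\widetilde{\mathcal{A}}_n(v(t_n))$ as a telescoping sum of three terms each multilinear in $e_n$, $v^n$, $v(t_n)$, Lemma~\ref{lem:An-2}(i) gives $\|\widetilde{\Phi}^n_3\|_{H^\gamma}\lesssim\tau(\|e_n\|_{H^{\gamma_0}}\|v^n\|_{H^{\gamma_0}}^2+\cdots)$ with $\gamma_0=\max(\gamma,\gamma_1)$ for some fixed $\gamma_1>\frac12$; since $v^n,v(t_n)$ are bounded in $H^{\gamma+2}\subset H^{\gamma_0}$ (Lemma~\ref{lem:apri}), this is $\lesssim\tau\|e_n\|_{H^\gamma}$ after again using $\|e_n\|_{H^{\gamma_0}}\lesssim\|e_n\|_{H^{\gamma+2}}$ and, when $\gamma_1>\gamma$, paying the extra $\gamma_1-\gamma$ derivatives against the $\tau$-smallness of $e_n$ in $H^{\gamma+2}$ — this is exactly where the regularity budget $\gamma+3$ is used.

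Assembling, I would obtain an estimate of the shape
\begin{align*}
\|\widetilde{\Phi}^n(v^n)-\widetilde{\Phi}^n(v(t_n))\|_{H^\gamma}^2\le\|e_n\|_{H^\gamma}^2+C\tau\|e_n\|_{H^\gamma}^2,
\end{align*}
where all higher powers of $\|e_n\|_{H^\gamma}$ and all half-powers of $\tau$ have been absorbed into $C\tau\|e_n\|_{H^\gamma}^2$ by means of $\|e_n\|_{H^\gamma}\le\|e_n\|_{H^{\gamma+2}}\lesssim\tau\le 1$; then $\sqrt{1+C\tau}\le 1+C\tau$ gives the claim. The main obstacle is bookkeeping: one must be careful that whenever a low-regularity norm such as $\|e_n\|_{H^{\gamma-1}}$ or $\|e_n\|_{H^{\gamma_1}}$ with $\gamma_1>\frac12$ appears in the first-order-type estimates (which were designed for $\gamma>\frac12$), it is now controlled by $\|e_n\|_{H^{\gamma+2}}$ via Lemma~\ref{lem:apri}, rather than by $\|e_n\|_{H^\gamma}$ — so the a priori $H^{\gamma+2}$ bound on $v^n$ is doing essential work, and one should invoke it before, not after, running the Gronwall/bootstrap argument in the proof of Theorem~\ref{thm:convergence-2ord}. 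The analysis of $\widetilde{\mathcal{A}}_n$ itself is harmless because Lemma~\ref{lem:An-2}(i) already gives the clean $O(\tau)$ bound with no derivative loss beyond $\gamma_1>\frac12$.
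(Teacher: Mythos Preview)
Your overall decomposition is sensible, but there is a genuine gap in how you handle the range $0\le\gamma\le\frac12$. Each of Lemma~\ref{lm3.5}(i), Lemma~\ref{lem:bi-est}(ii) and Lemma~\ref{lem:An-2}(i) carries the hypothesis $\gamma>\frac12$ (or $\gamma_0>\frac12$), and your workaround is to apply them at some $\gamma_0>\frac12$ and then absorb the excess regularity on $e_n$ via $\|e_n\|_{H^{\gamma_0}}\le\|e_n\|_{H^{\gamma+2}}\lesssim\tau$. But this produces contributions of the form $C\tau^2\|e_n\|_{H^\gamma}$ or $C\tau^2$ (e.g.\ in your treatment of $\widetilde\Phi^n_3$ you end up with $\tau\cdot\|e_n\|_{H^{\gamma_0}}\lesssim\tau^2$), and such additive terms cannot be bounded by $C\tau\|e_n\|_{H^\gamma}^2$ in general (take $e_n=0$). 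So you do not recover the purely multiplicative form $(1+C\tau)\|e_n\|_{H^\gamma}$ stated in the lemma. The same issue already appears in your cross term for $\Phi^n_1$: Lemma~\ref{lm3.5}(i) gives an extra $\|e_n\|_{H^\gamma}\|e_n\|_{H^{\gamma_1}}\|v(t_n)\|_{H^{\gamma+1}}$ piece that you have silently dropped.

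The paper's proof fixes this by always placing the derivative losses on the \emph{high-regularity} factors $v^n$ and $v(t_n)$ (bounded in $H^{\gamma+2}$ by Lemma~\ref{lem:apri}) rather than on $e_n$. Concretely: (a) $\Phi^n_1$ is written in the factored form $\int\partial_x\big(e_n\cdot(v^n+v(t_n))\big)$, without isolating an $e_n^2$ piece, and the inner product $\langle J^\gamma\Phi^n_1,J^\gamma e_n\rangle$ is estimated by Lemma~\ref{lm3.5}(ii), which for any $\gamma\ge0$ gives $C\tau\|e_n\|_{H^\gamma}^2\|v^n+v(t_n)\|_{H^{\gamma+\gamma_1+1}}$ with all extra derivatives on $v^n+v(t_n)$; (b) rather than invoking Lemma~\ref{lem:An-2}(i) as a black box, the paper uses the identity $\int_0^\tau\widetilde{\mathcal A}_n=\int_0^\tau\mathcal A_n+\frac\tau2\,\fe^{t_n\partial_x^3}\partial_x^{-1}(\cdot)^3-\frac\tau2\,\fe^{t_{n+1}\partial_x^3}\partial_x^{-1}(\cdot)^3$, so the new contribution is an explicit cubic term $\widetilde\Phi^n_0$ for which the asymmetric Kato--Ponce estimate $\|\partial_x^{-1}(f_1f_2f_3)\|_{H^\gamma}\lesssim\|f_1\|_{H^\gamma}\|f_2\|_{H^{\gamma_1}}\|f_3\|_{H^{\gamma_1}}$ (valid for all $\gamma\ge0$) immediately yields $\|\widetilde\Phi^n_0\|_{H^\gamma}\le C\tau\|e_n\|_{H^\gamma}$. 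With this rearrangement every term is genuinely of order $\tau\|e_n\|_{H^\gamma}^2$ in the square, and no a~priori smallness of $e_n$ is needed---only the boundedness $\|v^n\|_{H^{\gamma+2}}\le C$.
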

\begin{proof}
Note that
\begin{align*}
 \int_0^\tau\widetilde{\mathcal{A}}_n({ v}^n)(s)\,ds
  =&\int_0^\tau\mathcal{A}_n({ v}^n)(s)\,ds+\frac\tau2\fe^{t_n\partial_x^3}\partial_x^{-1}\left(\fe^{-t_n\partial_x^3}{ v}^n\right)^3
  -\frac{\tau}{2}\fe^{t_{n+1}\partial_x^3}\partial_x^{-1}\left(\fe^{-t_{n+1}\partial_x^3}{ v}^n\right)^3,
 \end{align*}
Then using the notations defined in the proof of Lemma \ref{lem:stability}, we write
\begin{align*}
&\widetilde{\Phi}^n({ v}^n)-\widetilde{\Phi}^n(v(t_n))
 =e_n+\Phi^n_1+\Phi^n_2+\Phi^n_3+\widetilde{\Phi}^n_0,
\end{align*}
where
\begin{align*}
\widetilde{\Phi}^n_0=&\frac\tau2\fe^{t_n\partial_x^3}\partial_x^{-1}\left[\left(\fe^{-t_n\partial_x^3}{ v}^n\right)^3
  -\left(\fe^{-t_n\partial_x^3}v(t_n)\right)^3\right]\\
&-\frac{\tau}{2}\fe^{t_{n+1}\partial_x^3}\partial_x^{-1}\left[\left(\fe^{-t_{n+1}\partial_x^3}{ v}^n\right)^3
  -\left(\fe^{-t_{n+1}\partial_x^3}v(t_n)\right)^3\right].
\end{align*}
Then it follows that
\begin{align*}
\left\|\Phi\left(v(t_n)\right)-\Phi\left(v^n\right)\right\|_{H^\gamma}^2
\le& \|e_n\|_{H^\gamma}^2+2\left\langle J^\gamma\Phi^n_1,J^\gamma e_n\right\rangle
+2\|e_n\|_{H^\gamma}\left\|\Phi^n_2\right\|_{H^\gamma}+2\|e_n\|_{H^\gamma}\left\|\Phi^n_3\right\|_{H^\gamma}
\\
&
+2\|e_n\|_{H^\gamma}\left\|\widetilde{\Phi}^n_0\right\|_{H^\gamma}+3\left\|\Phi^n_1\right\|_{H^\gamma}^2+3\left\|\Phi^n_2\right\|_{H^\gamma}^2
+3\left\|\Phi^n_3\right\|_{H^\gamma}^2+3\left\|\widetilde{\Phi}^n_0\right\|_{H^\gamma}^2.
\end{align*}
For $\left\langle J^\gamma\Phi^n_1,J^\gamma e_n\right\rangle$, since
\begin{align*}
\Phi^n_1=& \int_0^\tau
 \fe^{(t_n+s)\partial_x^3}\partial_x\Big(\fe^{-(t_n+s)\partial_x^3}
 e_n\cdot\fe^{-(t_n+s)\partial_x^3}\left(v^n+v(t_n)\right)\Big)ds,
 \end{align*}
 we have that
\begin{align*}
\left\langle J^\gamma\Phi^n_1,J^\gamma e_n\right\rangle
=& \int_0^\tau
 \left\langle J^\gamma \partial_x\left(\fe^{-(t_n+s)\partial_x^3}
 e_n\cdot\fe^{-(t_n+s)\partial_x^3}\left(v^n+v(t_n)\right)\right),J^\gamma \fe^{-(t_n+s)\partial_x^3}e_n\right\rangle ds.\end{align*}
Therefore, by Lemma \ref{lm3.5} (ii) and Lemma \ref{lem:apri}, it infers that
\begin{align}
\big|\left\langle J^\gamma\Phi^n_1,J^\gamma e_n\right\rangle\big|
\lesssim \tau\left\|e_n\right\|_{H^\gamma}^2\left(\left\|v(t_n)\right\|_{H^{\gamma+2}}+\left\|v^n\right\|_{H^{\gamma+2}}\right)\le C\tau\left\|e_n\right\|_{H^\gamma}^2.\label{est:phi-en}
\end{align}
Modifying the estimates similarly in the proof of Lemma \ref{lem:stability}, we also have
\begin{align}
\left\|\Phi_1^n\right\|_{H^\gamma}\le C
\sqrt\tau \left\|e_n\right\|_{H^{\gamma}};\quad
\left\|\Phi_2^n\right\|_{H^\gamma}+\left\|\Phi_2^n\right\|_{H^\gamma}\le C
\tau \left\|e_n\right\|_{H^{\gamma}}.\label{est:Phi-123}
\end{align}

Furthermore, following from Lemma \ref{lem:kato-Ponce},
we have that for any $\gamma_1>\frac12$, $f_j\in H^{\gamma}\cap H^{\gamma_1}$ for $j=1,2,3$ and any $t\geq0$,
 $$
 \left\|\fe^{t\partial_x^3}\partial_x^{-1}\left(\fe^{-t\partial_x^3}
 f_1\cdots\fe^{-t\partial_x^3}f_3\right)\right\|_{H^{\gamma}}
 \lesssim  \left\|f_1\right\|_{H^{\gamma}}\left\|f_2\right\|_{H^{\gamma_1}}\left\|f_3\right\|_{H^{\gamma_1}}.
 $$
Hence, using the above estimate and Lemma \ref{lem:apri}, we get
 \begin{align}\label{13.29}
\left\|\widetilde {\Phi}^n_0\right\|_{H^\gamma}\lesssim \tau\left\|e_n\right\|_{H^\gamma}\left(\left\|v(t_n)\right\|_{H^{\gamma_1}}^2+\left\|v^n\right\|_{H^{\gamma_1}}^2\right)\le C\tau\left\|e_n\right\|_{H^\gamma}.
 \end{align}
 Together with \eqref{est:phi-en}, \eqref{est:Phi-123} and \eqref{13.29}, we obtain that
\begin{align*}
\left\|\widetilde{\Phi}^n\left(v(t_n)\right)-\widetilde{\Phi}^n\left({ v}^n\right)\right\|_{H^\gamma}
\le& (1+ C\tau)\left\|{v}^n-v(t_n)\right\|_{H^\gamma}.
\end{align*}
This implies the desired result.
\end{proof}

\subsection{Proof of Theorem \ref{thm:convergence-2ord}}
Similarly as the proof of Theorem \ref{thm:convergence}, now by applying Lemma \ref{lem:local-error-2or} and Lemma \ref{lem:stability-2or} instead, we have Theorem \ref{thm:convergence-2ord}. \qed

\section{Numerical results}\label{sec:numerical}
In this section, we will carry out numerical experiments of the presented ELRI schemes
for justifying the convergence theorems. As benchmark for comparisons, we will show the results of the existing low-regularity integrators (LRIs) from \cite{kdv-kath},  and we shall denote the first order scheme therein as LRI1 and the second order scheme as LRI2 for short in the following. It is known from \cite{kdv-kath,WuZhao-1} that LRI1 can offer the first order accuracy in Sobolev space with two additional bounded spatial derivatives of the solution, and LRI2 can offer the second order accuracy with four additional bounded spatial derivatives.

To get an initial data with the desired regularity, we construct $u_0(x)$ for the KdV equation (\ref{model}) by the following strategy \cite{lownls}.
 Choose $N>0$ as an even integer and discretize the spatial domain $\bT=(0,2\pi)$ with grid points
$x_j=j\frac{2\pi}{N}$ for $j=0,\ldots,N$.
Take a uniformly distributed random vectors $\mathrm{rand}(N,1)\in[0,1]^N$ and denote
$$\mathcal{U}^N=\mathrm{rand}(N,1).$$
Then we define
\begin{equation}\label{non-smooth}
u_0(x):=\frac{|\partial_{x,N}|^{-\theta}\mathcal{U}^N}
{\||\partial_{x,N}|^{-\theta}\mathcal{U}^N\|_{L^\infty}},\quad
x\in\bT,
\end{equation}
where the pseudo-differential operator $|\partial_{x,N}|^{-\theta}$ for $\theta\geq0$ reads: for Fourier modes $l=-N/2,\ldots$, $N/2-1$,
\begin{equation*}
 \left(|\partial_{x,N}|^{-\theta}\right)
 _l=\left\{\begin{split}
 &|l|^{-\theta},\quad \mbox{if}\ l\neq0,\\
  &0,\qquad\ \, \mbox{if}\ l=0.
  \end{split}\right.
\end{equation*}
Thus, we get $u_0\in H^\theta(\bT)$ for any $\theta\geq0$.

We implement the spatial discretizations of the numerical methods within discussions by the Fourier pseudo-spectral method \cite{Shen}
with a large number of grid points $N=2^{14}$ in the torus domain $\bT$. We shall present the error
$u(x,t_n)-u^n(x)$ under $H^1$ or $L^2$ norm at the final time $t_n=T=1$, where the `exact' solution is obtained numerically by the ELRI2 scheme (\ref{def:LRI-2ord scheme})
with $\tau=10^{-4}$. Firstly, we perform convergence tests to verify our theoretical results. Figure \ref{fig:LRI} shows the error in $H^1$-norm of the first order methods ELRI1 (\ref{def:LRI-1ord scheme}) and the LRI1 scheme from \cite{kdv-kath},
by using different time step $\tau$ under the initial data $u_0$ in $H^3$ or $H^2$ as defined in (\ref{non-smooth}).
In Figure \ref{fig:LRI2}, we show the corresponding error in $L^2$-norm  of the second order methods ELRI2 (\ref{def:LRI-2ord scheme}) and the LRI2 scheme from \cite{kdv-kath,WuZhao-1}, under $u_0$ in $H^4$ or $H^3$.
Furthermore, to illustrate the efficiency of the proposed ELRIs over the existing LRIs, we show in Figure \ref{fig:cpu} the error of the ELRIs and LRIs against the computational time. The schemes are programmed here sequentially in MATLAB and the tests were run on a Mac with 3.6GHz Intel Core i7.
At last, to test the technique condition $\gamma>\frac12$ in Theorem \ref{thm:convergence} for ELRI1, in Figure \ref{fig:L2} we show the error of ELRI1 in $L^2$-norm under initial data from $H^1$ space.

\begin{figure}[t!]
$$\begin{array}{cc}
\psfig{figure=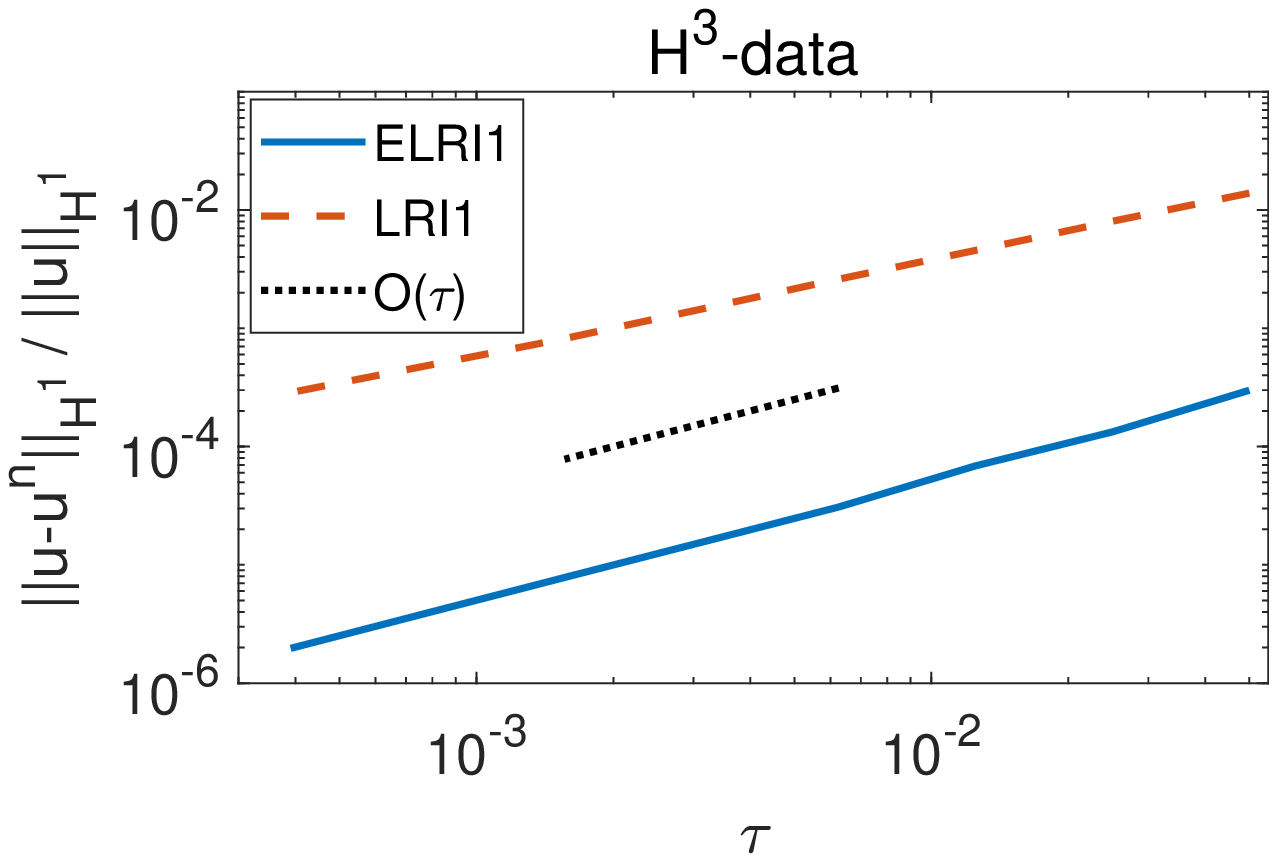,height=6cm,width=7cm}&
\psfig{figure=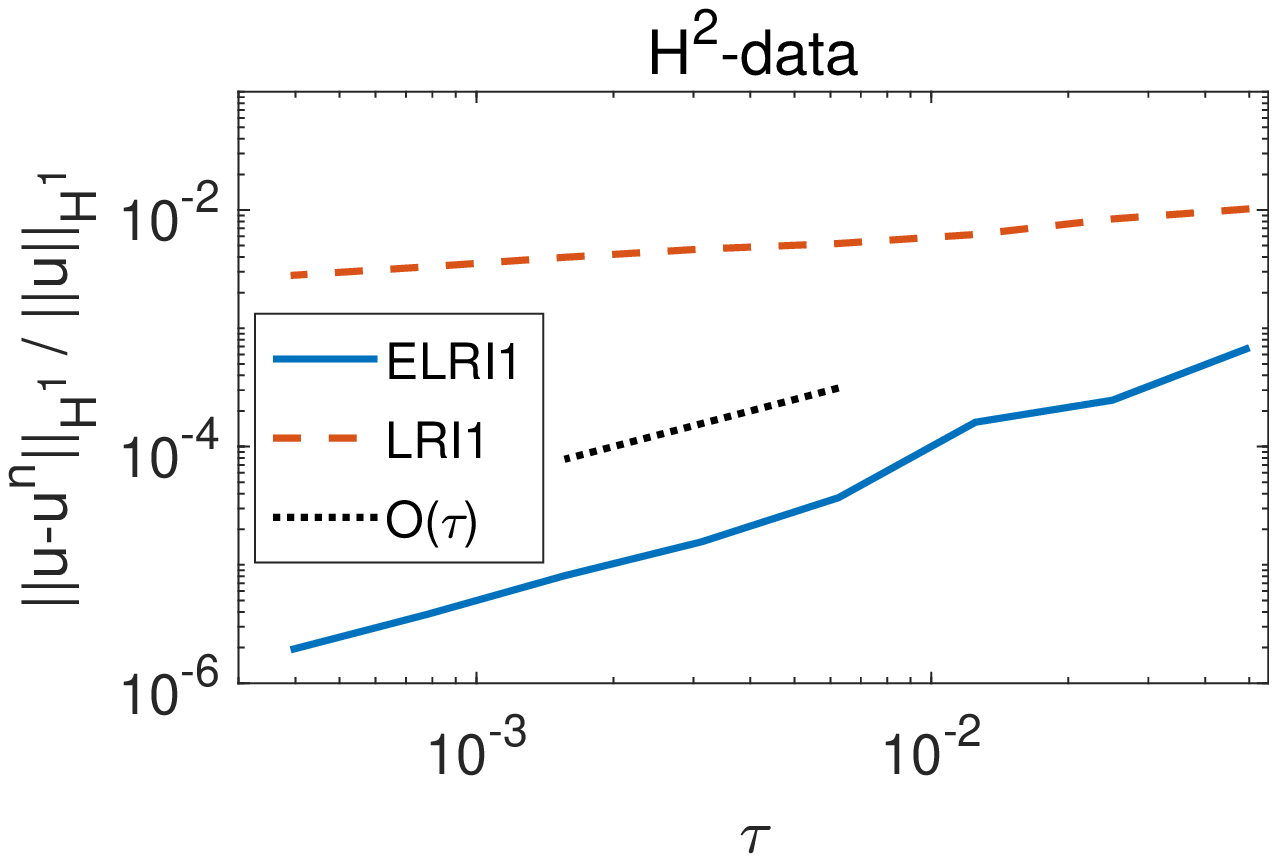,height=6cm,width=7cm}
\end{array}$$
\caption{Convergence of the first order methods:  relative error $\|u-u^n\|_{H^1}/\|u\|_{H^1}$ of ELRI1 and LRI1 at $t_n=T=1$ under
$H^3$-initial data (left) and $H^2$-initial data (right).
}
\label{fig:LRI}
\end{figure}

\begin{figure}[t!]
$$\begin{array}{cc}
\psfig{figure=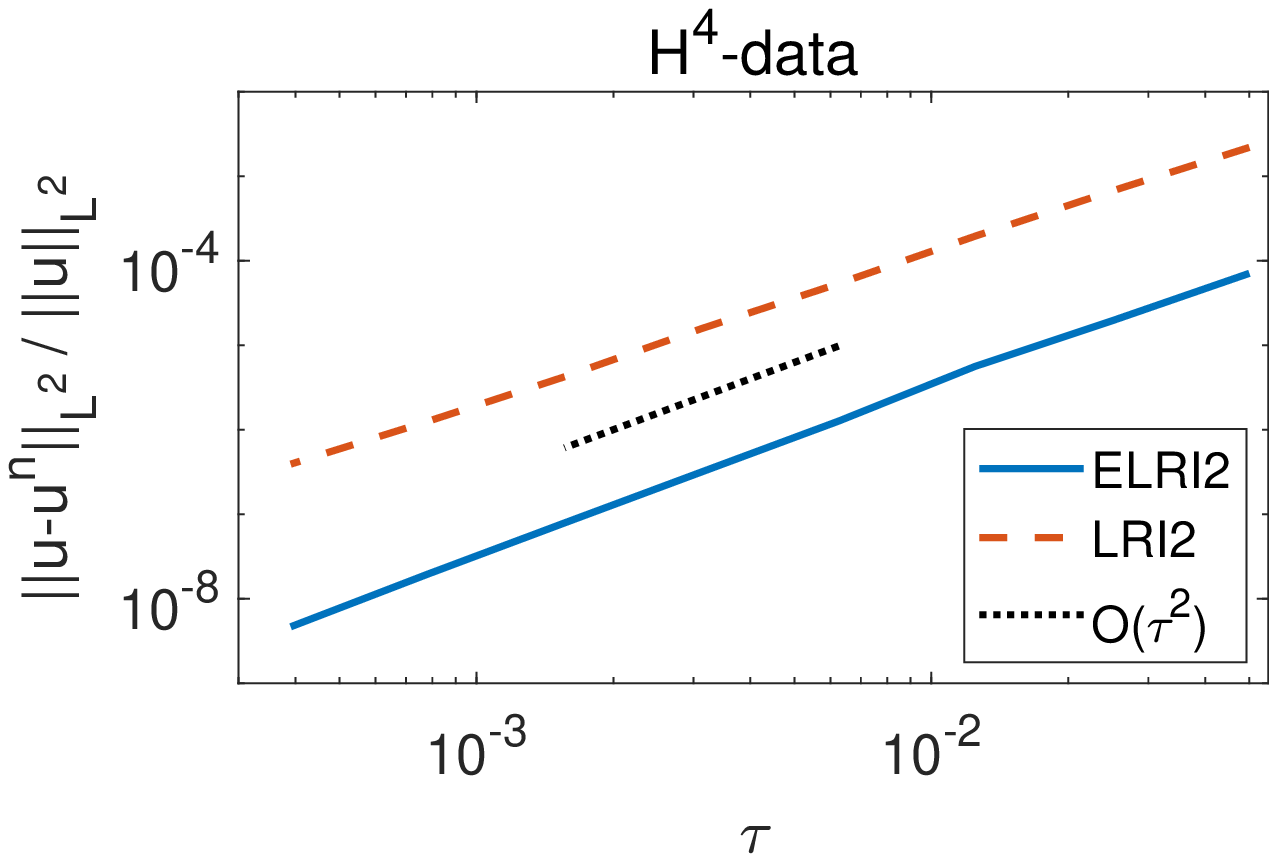,height=6cm,width=7cm}&
\psfig{figure=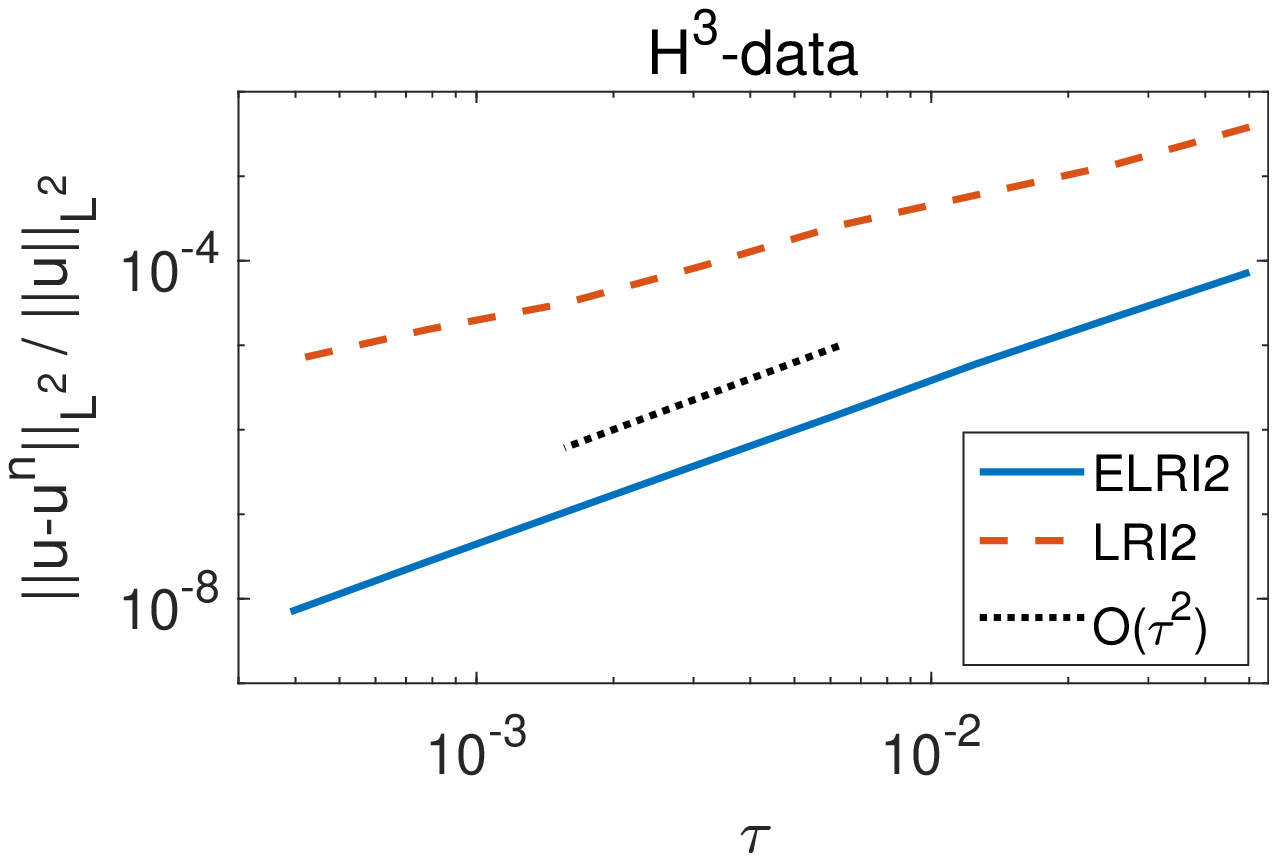,height=6cm,width=7cm}
\end{array}$$
\caption{Convergence of the second order methods:  relative error $\|u-u^n\|_{L^2}/\|u\|_{L^2}$ of ELRI2 and LRI2 at $t_n=T=1$ under
$H^4$-initial data (left) and $H^3$-initial data (right).
}
\label{fig:LRI2}
\end{figure}

\begin{figure}[t!]
$$\begin{array}{cc}
\psfig{figure=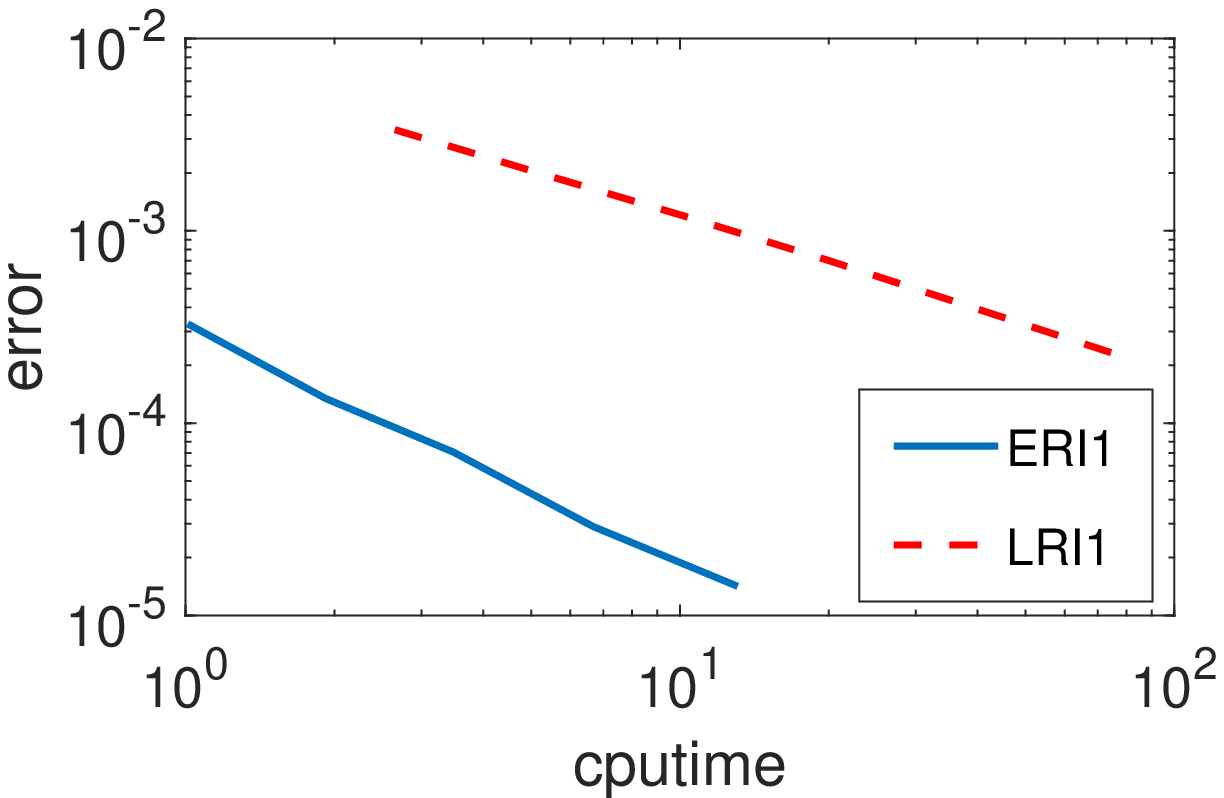,height=6cm,width=7cm}&
\psfig{figure=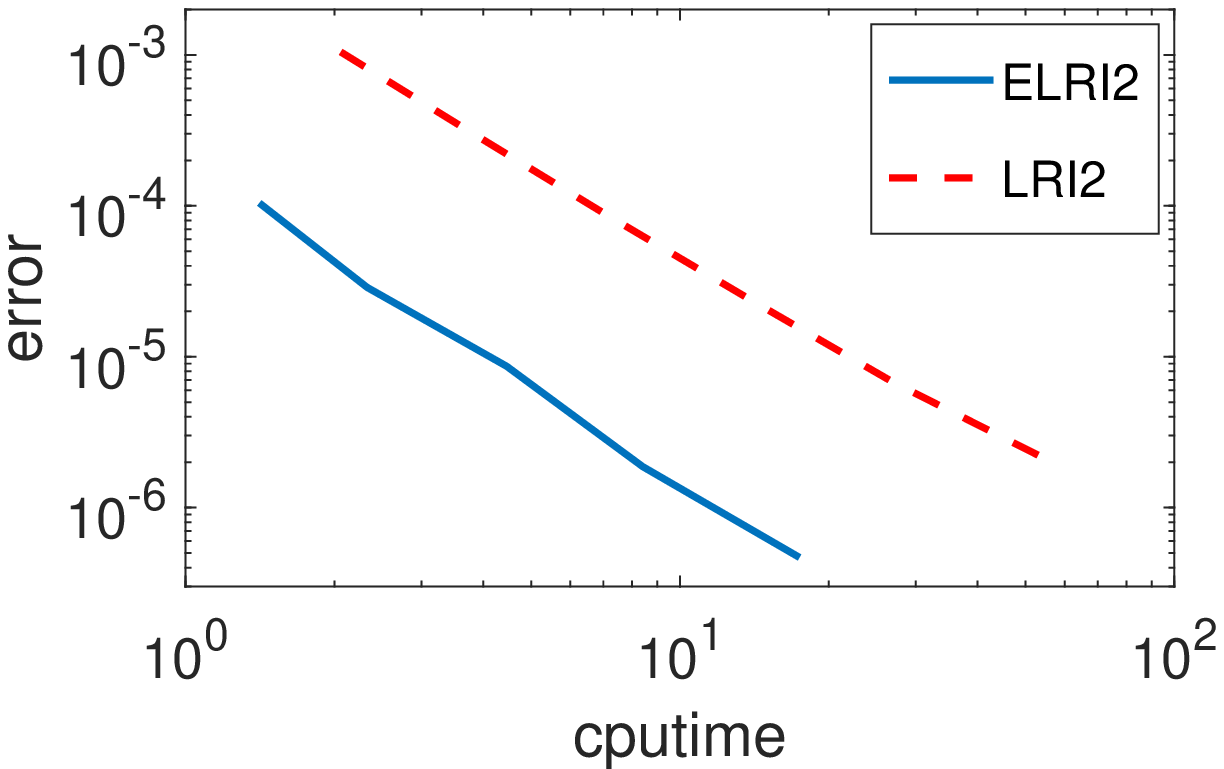,height=6cm,width=7cm}
\end{array}$$
\caption{Efficiency comparison between ELRIs and LRIs: the error $\|u-u^n\|_{H^1}/\|u\|_{H^1}$ of ELRI1 and LRI1 under $H^3$-data (left), and $\|u-u^n\|_{L^2}/\|u\|_{L^2}$ of ELRI2 and LRI2 under $H^4$-data (right) against computational time (cputime).
}
\label{fig:cpu}
\end{figure}

\begin{figure}[t!]
$$\begin{array}{c}
\psfig{figure=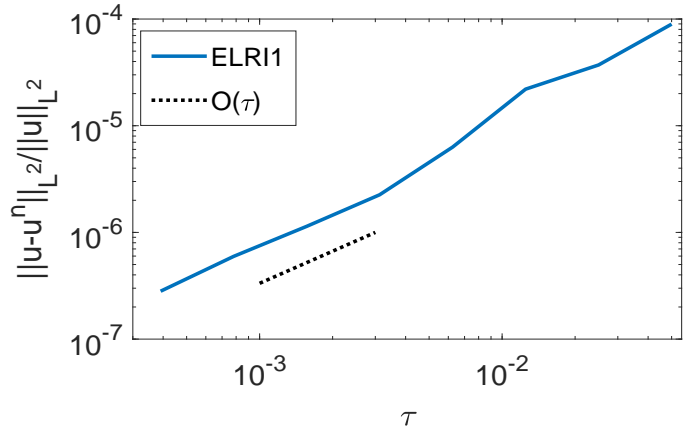,height=6cm,width=10cm}
\end{array}$$
\caption{Relative error $\|u-u^n\|_{L^2}/\|u\|_{L^2}$ of ELRI1 at $t_n=T=1$ under $H^1$-initial data.
}
\label{fig:L2}
\end{figure}

Based on the numerical results from Figures \ref{fig:LRI}-\ref{fig:L2}, we have the following clear observations:

1) The proposed ELRL1 scheme (\ref{def:LRI-1ord scheme}) shows the first order accuracy in $H^1$-norm with data in $H^{2}$ (see the right one of Figure \ref{fig:LRI}), and the ELRI2 (\ref{def:LRI-2ord scheme})
shows the second order accuracy in $L^2$-norm with data in $H^{3}$ (see the right one of Figure \ref{fig:LRI2}), which justify the theoretical results: Theorem \ref{thm:convergence} and Theorem \ref{thm:convergence-2ord}. In the contrast, the existing LRIs require more regularity to reach their optimal convergence rates.

2) The proposed ELRIs are much more accurate than the existing LRIs from \cite{kdv-kath} for solving the KdV equation (\ref{model}),  particularly when the data is less regular (see Figures \ref{fig:LRI}\&\ref{fig:LRI2}). To reach the same accuracy level, the LRIs cost much more computational time than the ELRIs (see Figure \ref{fig:cpu}).   Therefore, the ELRIs are more efficient for solving the KdV equation (\ref{model}) under rough data. For a comparison between the LRI method and the splitting method, we refer the readers to \cite{WuZhao-1}.

3) The technical condition $\gamma>\frac{1}{2}$ in the convergence theorem for ELRI1, i.e. Theorem \ref{thm:convergence} or Corollary  \ref{cor:ELRI}, is not needed in practical computing. Based on our numerical experience, we claim that  the optimal convergence rate of ELRI1 is true for any $\gamma\geq0$ (see Figure \ref{fig:L2}). While, the rigorous proof requires much more technical analysis and will be addressed in a future work.

\section{Conclusion} \label{sec:conclusion}
In this work, we have proposed two new embedded low-regularity integrators (ELRIs) for solving the KdV equation under rough data.
The new schemes are based on the exponential-type integration and a new embedded form together with some ideas borrowed from harmonic analysis. The rigorous convergence
theorems of the proposed ELRIs were established. It was shown that the ELRI schemes for the KdV equation could reach the first
order accuracy in $H^\gamma$-norm with initial data in $H^{\gamma+1}$ for $\gamma>\frac12$ and could reach the second order accuracy in $H^\gamma$-norm with
initial data in $H^{\gamma+3}$ for $\gamma\ge 0$, where the regularity requirements are lower than existing methods so far.  Numerical results were reported to justify the theoretical results
and comparisons are made with existing low-regularity integrators to show the improvements.

\section*{Acknowledgements}
Y. Wu is partially supported by NSFC 11771325 and 11571118.  X. Zhao is partially supported by the Natural Science Foundation of Hubei Province No. 2019CFA007, the NSFC 11901440 and the starting research grant of Wuhan University.

\bibliographystyle{model1-num-names}

\end{document}